\documentclass[11pt,letterpaper]{amsart}
\usepackage[utf8]{inputenc}
\usepackage{amsmath,amssymb,amsthm, mathtools, tikz-cd, verbatim}
\usepackage{diagbox}
\usepackage{geometry}
\usepackage{listings}
\usepackage{xcolor}

\title[The Arithmetic of Elliptic Pairs and An $\nu+1$-variable Artin Conjecture]{The Arithmetic of Elliptic Pairs and \\ An $\nu+1$-variable Artin Conjecture}
\author{Pranavkrishnan Ramakrishnan}
\date{September 2023}

\definecolor{codegreen}{rgb}{0,0.6,0}
\definecolor{codegray}{rgb}{0.5,0.5,0.5}
\definecolor{codepurple}{rgb}{0.58,0,0.82}
\definecolor{backcolour}{rgb}{0.95,0.95,0.92}

\lstdefinestyle{mystyle}{
  backgroundcolor=\color{backcolour}, commentstyle=\color{codegreen},
  keywordstyle=\color{magenta},
  numberstyle=\tiny\color{codegray},
  stringstyle=\color{codepurple},
  basicstyle=\ttfamily\footnotesize,
  breakatwhitespace=false,         
  breaklines=true,                 
  captionpos=b,                    
  keepspaces=true,                 
  numbers=left,                    
  numbersep=5pt,                  
  showspaces=false,                
  showstringspaces=false,
  showtabs=false,                  
  tabsize=2
}

\lstset{style=mystyle}

\geometry{margin=1in}

\usepackage[
maxbibnames = 99,
backend=biber,
style=alphabetic,
]{biblatex}


\addbibresource{main.bib}

\newcommand{\im}{\textup{im }}

\newcommand{\li}{\textup{li}}
\newcommand{\modp}{\textup{ mod }p}

\newcommand{\pseff}{\overline{\textup{Eff}}}
\newcommand{\Num}{\textup{Num}}

\newtheorem{theorem}{Theorem}[section]
\newtheorem*{theorem*}{Theorem}
\newtheorem{lemma}[theorem]{Lemma}
\newtheorem{corollary}[theorem]{Corollary}
\newtheorem{prop}[theorem]{Proposition}
\newtheorem*{corollary*}{Corollary}
\newtheorem*{lemma*}{Lemma}
\theoremstyle{definition}
\newtheorem{defin}{Definition}[section]
\newtheorem{remark}{Remark}[section]

\begin{document}
\maketitle
\begin{abstract}
    The theory of elliptic pairs, as investigated in a paper by Castravet, Laface, Tevelev, and Ugaglia, provides useful conditions to determine polyhedrality of the pseudo-effective cone, which give rise to interesting arithmetic questions when reducing the variety modulo $p$. In this paper, we examine one such case, namely the blow-up $X$ of 9 points in $\mathbb{P}^2$ lying on the nodal cubic, and study the density of primes $p$ for which the pseudo-effective cone of the reduction of $X$ modulo $p$ is polyhedral. This problem reduces to an analogue of Artin’s Conjecture on primitive roots like that investigated by Stephens and then Moree and Stevenhagen. As a result, we find that the density of such "polyhedral primes" hover around a higher analogue of the Stephens' Constant under the assumption of the Generalized Riemann Hypothesis. Finally, in order to determine a precise value for the density of polyhedral primes, we look at the containment of rank 8 root sublattices of $\mathbb{E}_8$. 
\end{abstract}
\section{Introduction}

\par An important invariant of a projective variety $X$ is its pseudo-effective cone $\overline{\textup{Eff}}(X)$. Taking an irreducible curve $C$ of arithmetic genus 1 on a smooth projective surface $X$ with $C^2=0$ forms a smooth elliptic pair $(C, X)$\cite[Def. 3.1]{https://doi.org/10.48550/arxiv.2009.14298}, which provides conditions to determine the polyhedrality of $\overline{\textup{Eff}}(X)$. In many cases, taking $X$ defined over $\mathbb{Q}, \pseff(X)$ is not polyhedral. However, when reduced by a prime $p$, the resulting surface $X_p$ may have a polyhedral pseudo-effective cone. Primes $p$ for which the pseudo-effective cone of $X_p$ are polyhedral are called polyhedral primes and the density of such primes, denoted $\delta(X)$, is an interesting question, and so the focus of this work. 
\par A specific case of this problem has been investigated in \cite[Sec. 4]{https://doi.org/10.48550/arxiv.2204.02971}: let $\Gamma\subset \mathbb{P}^2$ be a nodal cubic defined over $\mathbb{Q}$ and $z_1=1, \ldots, z_7=1, z_8=a, z_9=ba^{-1}\in \Gamma^{\textup{sm}}(\mathbb{Q})\cong \mathbb{Q}^*$ with $a, b\in\Gamma^{\textup{sm}}(\mathbb{Q})$ multiplicatively independent where 1 is a flex point. Define $X$ as the blow up of $\mathbb{P}^2$ at $z_i$'s with the blow up at $z_1,\ldots, z_7$ being infinitely near blow ups and $C$ as the proper transform of $\Gamma$ under the blow up. By \cite[Thm. 4.1]{https://doi.org/10.48550/arxiv.2204.02971}, the density of polyhedral primes in this case is equal to density of primes $p$ such that $b^2\modp\in\langle a\modp\rangle$, which we shall denote $\delta(a, b^2)$. The density of such primes is the focus of \cite{https://doi.org/10.48550/arxiv.math/9912250}. Under the assumption of the Generalised Riemann Hypothesis, $\delta(a,b^2)$ is a rational multiple of the Stephens' Constant $S$ \cite[Thm. 2]{https://doi.org/10.48550/arxiv.math/9912250}, which is defined as
\begin{equation}
    S = \prod_{p\textup{ prime}}\Big(1-\frac{p}{p^3-1}\Big)
\end{equation}
In this paper, we shall consider a different case: we shall let $z_i\in\mathbb{Q}^*_{>0}$ for distinct $z_i$ subject to the condition of \textit{multiplicative independence}, which is to say they form a basis of a $\mathbb{Z}$-submodule in $\mathbb{Q}^*$. Identifying the smooth rational points of $\Gamma$ with $\mathbb{Q}^*$ as before, we see that the restriction map $\textup{res}: \textup{Pic } X\rightarrow \textup{Pic } C$ induces an embedding $\overline{\textup{res}}$ of the root lattice $\mathbb{E}_8=C^\perp/\langle C\rangle$ into $\textup{Pic}^0 C/\langle \textup{res } C\rangle = \mathbb{Q}^*/\langle z_1\ldots z_9\rangle$, and likewise $\textup{res}_p: \textup{Pic } X_p\rightarrow \textup{Pic } C_p$ induces a linear map $\overline{\textup{res}_p}:\mathbb{E}_8\rightarrow \mathbb{F}_p^*/\langle z_1\ldots z_9 \textup{ mod } p\rangle$ for all but a finite number of primes $p$. By \cite[Cor. 3.18]{https://doi.org/10.48550/arxiv.2009.14298} $\overline{\textup{Eff}}(X_p)$ is polyhedral if and only if there exist 8 linearly independent roots $\beta_1,\ldots, \beta_8$ in $\mathbb{E}_8$ such that $\beta_i\in\ker\overline{\textup{res}_p}$ for all $i$. This means that the density of polyhedral primes is equal to the density of primes such that there exists 8 elements $b_1, \ldots, b_8\in \mathbb{Q}^*$ such that $\langle b_1, \ldots, b_8\mod z_1\ldots z_9\rangle$ corresponds to a rank 8 root lattice in $\mathbb{E}_8$ under the $\overline{\textup{res}}$ map and $b_1, \ldots, b_8 \in \langle z_1\ldots z_9\rangle \mod p$. To determine this, one must develop a theory of a 9 variable Artin conjecture, which shall be the focus of section 3, granted in a more general $\nu+1$ variable case for some $\nu\in\mathbb{N}$. Like with the usual Artin conjecture on primitive roots, a result is given under the assumption of the Generalized Riemann Hypothesis:
\begin{theorem}\label{ratmulnvar}
Let $a, b_1, \ldots, b_{\nu}\in\mathbb{Q}^*_{>0}$ such that they form a basis of a $\mathbb{Z}$-submodule in $\mathbb{Q}^*$, and define $\delta(a, b_1, \ldots, b_{\nu})$ be the density of the set
$$
\{p\textup{ prime}: b_1, \ldots, b_{\nu}\modp\in \langle a\modp\rangle\}.
$$
Under the assumption of the Generalised Riemann Hypothesis, $\delta(a, b_1, \ldots, b_{\nu})$ is a rational multiple of the Generalised Stephens' Constant $S^{(\nu)}$, defined as
\begin{equation}
    S^{(\nu)} = \prod_p\Big(1-\frac{p^\nu-1}{p-1}\Big(\frac{p}{p^{\nu+2}-1}\Big)\Big).
\end{equation}
\end{theorem}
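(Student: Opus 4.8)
The plan is to run the Hooley--Stephens method in the $\nu+1$ variable setting: rephrase the condition in terms of the splitting of a two-parameter family of Kummer extensions, compute each density with a GRH-conditional effective Chebotarev theorem, and recognize the resulting series as an Euler product. This specializes, for $\nu=1$, to the argument of \cite{https://doi.org/10.48550/arxiv.math/9912250}.

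\emph{Reduction to Kummer extensions.} For all but finitely many $p$ the elements $a,b_1,\ldots,b_\nu$ are units in $\mathbb{F}_p$, and since $\mathbb{F}_p^*$ is cyclic, $b_i\modp\in\langle a\modp\rangle$ for every $i$ is equivalent to $\mathrm{ord}_p(b_i)\mid\mathrm{ord}_p(a)$ for every $i$, hence to: each $b_i$ is an $i_p(a)$-th power in $\mathbb{F}_p^*$, where $i_p(a):=(p-1)/\mathrm{ord}_p(a)$ is the residual index. Decomposing according to the value $t=i_p(a)$ and Möbius-inverting the indicator of $\{i_p(a)=t\}$ (which is the indicator of $\{t\mid i_p(a)\}$ corrected, by inclusion--exclusion over the union $\bigcup_q\{tq\mid i_p(a)\}$, by the corresponding alternating sum), one obtains formally
\begin{equation}
\delta(a,b_1,\ldots,b_\nu)=\sum_{t\ge1}\ \sum_{k\ \mathrm{squarefree}}\mu(k)\,\delta\!\left(K_{t,k}\right),\qquad K_{t,k}:=\mathbb{Q}\!\left(\zeta_{tk},\,a^{1/tk},\,b_1^{1/t},\ldots,b_\nu^{1/t}\right),
\end{equation}
where $\delta(K)$ is the density of primes splitting completely in $K$. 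The verification is that on $\{tk\mid i_p(a)\}$ one has $tk\mid p-1$, so $\{tk\mid i_p(a)\}$ is exactly the set of $p$ splitting completely in $\mathbb{Q}(\zeta_{tk},a^{1/tk})$, and there "$b_i$ is a $t$-th power mod $p$" is exactly splitting completely in $\mathbb{Q}(\zeta_{tk},b_i^{1/t})$; the compositum of these fields is $K_{t,k}$, which contains $\zeta_{tk}$ and is therefore Galois over $\mathbb{Q}$, so $\delta(K_{t,k})=1/[K_{t,k}:\mathbb{Q}]$ by the Chebotarev density theorem. Absolute convergence of the double series follows from $[K_{t,k}:\mathbb{Q}]\gg\phi(tk)\,tk$ together with $\sum_n 2^{\omega(n)}/(\phi(n)\,n)<\infty$.

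\emph{The analytic step (the main obstacle).} The essential difficulty is to upgrade the formal identity above to a true one, i.e. to interchange $\lim_{x\to\infty}$ with the double sum; this is exactly where GRH enters. I would mimic Hooley's treatment of Artin's conjecture, splitting the summation range in $n=tk$ into three pieces: a "small" range $n\le\log x$, where the GRH-effective Chebotarev estimate of Lagarias--Odlyzko type (error $O(x^{1/2}\log(M_{t,k}x))$ with $M_{t,k}$ a bound for the conductor of $K_{t,k}$) produces the main term with admissible error; a "medium" range $\log x<n\le x^{1/2}/\log^2 x$, again controlled under GRH via the crude bound $[K_{t,k}:\mathbb{Q}]\gg tk$; and a "large" range $n>x^{1/2}/\log^2 x$, treated unconditionally by counting $p\le x$ with $n\mid p-1$ via the Brun--Titchmarsh inequality, exactly as in Hooley's argument. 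The only new feature compared with the one-variable case is the presence of the extra radicals $b_i^{1/t}$ and the second index $k$; these only enlarge the degrees and conductors by controlled amounts and so do not affect any of the estimates in substance.

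\emph{Evaluation of the Euler product.} By Kummer theory and the hypothesis that $a,b_1,\ldots,b_\nu$ form a basis of a $\mathbb{Z}$-submodule in $\mathbb{Q}^*$, one has $[K_{t,k}:\mathbb{Q}]=\phi(tk)\cdot tk\cdot t^\nu$ for all $(t,k)$ outside an explicit entangled set; by the classical description of which radical extensions of $\mathbb{Q}$ are abelian (a purely $2$-power phenomenon, refined by $a,b_i>0$), the failure is a bounded $2$-power defect supported on even $tk$, and multiplicative independence excludes every other degeneracy. Expanding the generic term as a product over primes $q$, a direct computation (which I omit) gives
\begin{equation}
\sum_{t\ge1}\ \sum_{k\ \mathrm{squarefree}}\frac{\mu(k)}{\phi(tk)\cdot tk\cdot t^\nu}=\prod_q\!\left(1-\frac{q^\nu-1}{q-1}\cdot\frac{q}{q^{\nu+2}-1}\right)=S^{(\nu)},
\end{equation}
so the generic contribution is precisely $S^{(\nu)}$; incorporating the finitely many local factors altered by entanglement multiplies the product by a positive rational number $r$, whence $\delta(a,b_1,\ldots,b_\nu)=r\,S^{(\nu)}$, which is the assertion. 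Making $r$ explicit in any given instance is then an elementary but somewhat intricate determination of the entanglement correction factor.
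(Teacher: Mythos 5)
Your proposal follows the same route as the paper: the same double sum over the fields $K_{t,k}=\mathbb{Q}(\zeta_{tk},a^{1/tk},b_1^{1/t},\ldots,b_\nu^{1/t})$ (the paper's $F_{i,j}$), the same Hooley--Stephens three-range analytic treatment under GRH with the Lagarias--Odlyzko effective Chebotarev theorem, and the same final step of controlling the degree loss by Kummer theory and evaluating the resulting constrained double sums as Euler products. Up to notation, this is the skeleton of the paper's Theorems 3.2, 3.6, 3.17 and their proofs.

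There is, however, one genuine inaccuracy in your Kummer-theoretic step. You assert that multiplicative independence gives $[K_{t,k}:\mathbb{Q}]=\varphi(tk)\cdot tk\cdot t^\nu$ outside an entangled set, and that ``the failure is a bounded $2$-power defect supported on even $tk$, and multiplicative independence excludes every other degeneracy.'' That is only true when none of $a,b_1,\ldots,b_\nu$ is a perfect power, whereas the theorem allows arbitrary multiplicatively independent elements of $\mathbb{Q}^*_{>0}$. If, say, $a=8$, then $a^{1/tk}$ generates a field of degree $tk/(tk,3)$ over $\mathbb{Q}(\zeta_{tk})$ at best, so the degree drops by a factor of $3$ for \emph{every} pair with $3\mid tk$ --- an odd-prime degeneracy affecting infinitely many $(t,k)$, not a $2$-power defect on even $tk$, and not describable as ``finitely many local factors altered.'' In the paper this is exactly the factor $t_{i,j}=(ij,m_a)\prod_h(i,m_{b_h})$ in the decomposition $f_{i,j}=t_{i,j}\cdot\#\ker\psi_{i,j}$ (Lemmas 3.12--3.13); only the kernel part is $2$-torsion (via the Hilbert~90 argument of Lemma 3.8). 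Handling it requires splitting the double sum over divisors $d\mid m_a$, $d_h\mid m_{b_h}$ (Lemmas 3.15--3.16) and then evaluating each constrained sum $S^{(\nu)}_{m,n}$ explicitly (Theorem 3.19, Corollary 3.20) to see that it is again a rational multiple of $S^{(\nu)}$. So as written your argument establishes the theorem only in the case $m_a=m_{b_1}=\cdots=m_{b_\nu}=1$; the general case needs this additional gcd bookkeeping, which fits your framework but is a substantive part of the paper's proof rather than a remark. (Also, the conclusion should be stated as a rational multiple, not necessarily a positive one times a finite Euler-factor correction; the rationality comes from the finite sum of terms $S^{(\nu)}_{m,n}$, each a rational multiple of $S^{(\nu)}$.)
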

\par The result of Theorem \ref{ratmulnvar} directly implies this geometric result: 
\begin{theorem}\label{mainresult}
Let $\Gamma$ be a nodal cubic and $z_1, \ldots, z_9$ be arbitrary points in $\Gamma^{\textup{sm}}(\mathbb{Q})\cong\mathbb{Q}^*$ such they form the basis of a $\mathbb{Z}$-submodule in $\mathbb{Q}^*$ where 1 is a flex point. Now let $X$ be the blow up of $\mathbb{P}^2$ at each $z_i$, and denote $\delta(X)$ to be the density of polyhedral primes of $X$. Under the assumption of the Generalised Riemann Hypothesis, the value of $\delta(X)$ is a rational multiple of $S^{(8)}$.
\end{theorem}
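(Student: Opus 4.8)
The plan is to use the lattice-theoretic criterion recalled in the introduction to break the condition ``$X_p$ is polyhedral'' into finitely many instances of the $(\nu{+}1)$-variable Artin problem of Theorem \ref{ratmulnvar}, taken at $\nu = 8$, and then to recombine these by inclusion--exclusion. Write $a := z_1\cdots z_9$, so that, away from a finite (hence density-zero) set of primes of bad reduction --- including the primes at which $\overline{\textup{res}_p}$ fails to be defined or at which $z_1,\ldots,z_9$ fail to remain multiplicatively independent modulo $p$ --- the map $\overline{\textup{res}_p}$ is a well-defined homomorphism $\mathbb{E}_8 \to \mathbb{F}_p^*/\langle a \bmod p\rangle$, and by \cite[Cor. 3.18]{https://doi.org/10.48550/arxiv.2009.14298} the surface $X_p$ is polyhedral precisely when $\ker\overline{\textup{res}_p}$ contains eight linearly independent roots of $\mathbb{E}_8$. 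Eight linearly independent roots generate a rank-$8$ root sublattice, and conversely every rank-$8$ root sublattice, being spanned over $\mathbb{Q}$ by the roots it contains, houses eight linearly independent roots; so $p$ is polyhedral (outside the bad set) if and only if $\ker\overline{\textup{res}_p}$ contains one of the rank-$8$ root sublattices of $\mathbb{E}_8$. Since $\mathbb{E}_8$ has only $240$ roots there are finitely many such sublattices, say $M_1,\ldots,M_r$, and this finite family is closed under taking sums inside $\mathbb{E}_8$ (a sum of root sublattices is generated by roots, and has rank $8$ as soon as one of the summands does).

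The heart of the argument is the analysis of a single sublattice $M = M_i$. Since $M$ is generated by roots, the set $\Phi_M$ of roots of $\mathbb{E}_8$ lying in $M$ is a root subsystem with $\mathbb{Z}\Phi_M = M$; pick simple roots $\beta_1,\ldots,\beta_8$ of $\Phi_M$, which form a $\mathbb{Z}$-basis of $M$. Then $M \subseteq \ker\overline{\textup{res}_p}$ if and only if $\overline{\textup{res}_p}(\beta_j) = 0$ for each $j$, i.e.\ if and only if $c_j^{(i)} \bmod p \in \langle a \bmod p\rangle$ for all $j$, where $c_j^{(i)} \in \mathbb{Q}^*$ is a lift of the class $\overline{\textup{res}}(\beta_j) \in \mathbb{Q}^*/\langle a\rangle$. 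Such a lift may be taken in $\mathbb{Q}^*_{>0}$: the image of $\overline{\textup{res}}$ is generated by the classes of $z_k z_\ell^{-1}$ and $(z_k z_\ell z_m)^{-1}$, all classes of positive rationals, so $\overline{\textup{res}}(\mathbb{E}_8)$ lies in the image of $\mathbb{Q}^*_{>0}$ in $\mathbb{Q}^*/\langle a\rangle$. Moreover $a, c_1^{(i)},\ldots,c_8^{(i)}$ are multiplicatively independent: injectivity of $\overline{\textup{res}}$ makes $\overline{\textup{res}}(\beta_1),\ldots,\overline{\textup{res}}(\beta_8)$ a $\mathbb{Z}$-basis of the free rank-$8$ group $\overline{\textup{res}}(M)$, hence $\mathbb{Z}$-independent in $\mathbb{Q}^*/\langle a\rangle$, so any multiplicative relation among $a, c_1^{(i)},\ldots,c_8^{(i)}$ reduces modulo $\langle a\rangle$ to the trivial one and then collapses to $a^n = 1$, which is impossible since $a$ is a product of multiplicatively independent positive reals and hence of infinite order. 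The resulting set of primes does not depend on the chosen lifts (two lifts differ by a power of $a$, which does not affect membership in $\langle a \bmod p\rangle$), and it is exactly the set whose density is $\delta(a, c_1^{(i)},\ldots,c_8^{(i)})$; by Theorem \ref{ratmulnvar} with $\nu = 8$, this density is, under GRH, a rational multiple of $S^{(8)}$.

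To finish, note that the set of polyhedral primes agrees up to a finite set with $\bigcup_{i=1}^{r} \{p : M_i \subseteq \ker\overline{\textup{res}_p}\}$, and that for any $S \subseteq \{1,\ldots,r\}$ one has $\bigcap_{i\in S}\{p : M_i \subseteq \ker\overline{\textup{res}_p}\} = \{p : \textstyle\sum_{i\in S} M_i \subseteq \ker\overline{\textup{res}_p}\}$, where $\sum_{i\in S} M_i$ is again a rank-$8$ root sublattice and so appears in the list $M_1,\ldots,M_r$. The previous step shows every such intersection has density a rational multiple of $S^{(8)}$, so inclusion--exclusion expresses $\delta(X)$ as a $\mathbb{Z}$-linear combination of rational multiples of $S^{(8)}$, hence itself a rational multiple of $S^{(8)}$.

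I expect the real work to be bookkeeping rather than hard analysis: the analytic input is already packaged in Theorem \ref{ratmulnvar}, and what remains is to verify carefully that the geometric polyhedrality criterion genuinely decomposes into the finitely many ``monomial'' Artin conditions above --- in particular, controlling the finite set of excluded primes and confirming the multiplicative-independence hypothesis of Theorem \ref{ratmulnvar} for each sublattice $M_i$ and each of the sums $\sum_{i\in S} M_i$. A related point, and the one that will matter for pinning down the precise rational coefficient in Section~4, is keeping explicit track of the $240$ roots of $\mathbb{E}_8$ and their images under $\overline{\textup{res}}$, which is exactly the containment analysis for rank-$8$ root sublattices advertised in the abstract.
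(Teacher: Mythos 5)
Your proposal is correct and follows essentially the same route as the paper: reduce polyhedrality of $X_p$ (via \cite[Thm. 3.8/Cor. 3.18]{https://doi.org/10.48550/arxiv.2009.14298}) to the condition that $\ker\overline{\textup{res}_p}$ contains one of the finitely many rank-8 root sublattices of $\mathbb{E}_8$, lift a basis of each such sublattice through the injective map $\overline{\textup{res}}$ to multiplicatively independent rationals, and apply Theorem \ref{ratmulnvar} with $\nu=8$. The only difference is that you spell out the inclusion--exclusion over sublattices (using the easy fact that a sum of root-generated rank-8 sublattices is again one), a bookkeeping step the paper leaves implicit at this point and only carries out later, in refined form, via the recursion for $\overline{\delta}(\Lambda)$ and the containment lemma used to compute the exact coefficient.
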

\subsection{Notation}\label{notationsection}
\par For the purposes of this work, for some fixed $\nu\in\mathbb{N}$, we shall denote $a, b_1, \ldots, b_\nu$ to be arbitrary nonzero rational numbers. For any $\xi\in\mathbb{R}_{>0}$, define $\xi\#=\prod_{0< p\leq \xi}p$. Furthermore for any $x\in\mathbb{Q}^*, x\neq \pm 1, $ we shall let $m_x\in \mathbb{N}$ be the largest natural number such that $x^{1/m_x}\in\mathbb{Q}$, and thus define $\widehat{x}$ such that $\widehat{x}^{m_x}=x$ and $c_x=\textup{ord}_2(m_x)$. Here $\textup{ord}_p(x)$ denotes the largest $k\in\mathbb{N}$ such that $p^k\mid x$. We shall let $[x, y]$ denote the least common multiple of $x$ and $y$ and let $[\alpha, x_i]_i$ denote the least common multiple of the set $\{\alpha\}\cup \{x_i\}_i$ (if it is just $[x_i]_i$ then $[x_i]_i$ equals the least common multiple of $\{x_i\}_i$.) $\Delta(x)$ shall denote the discriminant of $\mathbb{Q}(\sqrt{x})$, and $V=\langle \widehat{a}, \widehat{b_1}, \ldots, \widehat{b_\nu} \rangle/\langle \widehat{a}, \widehat{b_1}, \ldots, \widehat{b_\nu} \rangle^2\cong(\mathbb{Z}/2)^{\nu+1}$ for multiplicatively independent $a, b_1, \ldots, b_\nu$. Otherwise, in accordance with standard notation, $\zeta_k$ shall denote the primitive $k$-th root of unity and all instances of $p$ and $q$ shall refer to primes. 
\subsection{Main Results}
\begin{theorem}\label{O(disc)nvar}
Let $a, b_1, \ldots, b_\nu\in \mathbb{Q}^*_{>0}$ be multiplicatively independent such that 
$$
\textup{tor }\mathbb{Q}^*/\langle -1, a, b_1, \ldots, b_\nu\rangle\cong \mathbb{Z}/m_a\oplus\mathbb{Z}/m_{b_1}\oplus\ldots\oplus\mathbb{Z}/m_{b_\nu}
$$
is generated by $\widehat{a}, \widehat{b_1}, \ldots, \widehat{b_\nu}$. Under the assumption of the Generalised Riemann Hypothesis:
$$
\delta(a, b_1,\ldots, b_\nu) = \sum_{\substack{d\mid m_a\\ d_1\mid m_{b_1}\\ \cdots\\ d_\nu\mid m_{b_\nu}}}\Big(\varphi(d)\prod_{h=1}^\nu \varphi(d_h)\Big)S^{(\nu)}_{[d_h]_h, d}+\sum_{\substack{x\in V\\ x\neq 1}} O\left(\frac{\log\Delta(\Hat{x})}{\Delta(\widehat{x})^2}\right).
$$
where
$$
S_{m,n}^{(\nu)} = \frac{S^{(\nu)}}{[m,n]^{\nu+2}}\prod_{p\mid \frac{n}{(m,n)}}\frac{-p^{\nu+3}(p^\nu-1)}{p^{\nu+3}-p^{\nu+2}-p^{\nu+1}+1}\prod_{p\mid \frac{m}{(m,n)}}\frac{p^{\nu+1}(p^2-1)}{p^{\nu+3}-p^{\nu+2}-p^{\nu+1}+1}.
$$
\end{theorem}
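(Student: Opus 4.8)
The plan is to run Hooley's conditional method for Artin's primitive-root conjecture in the refined form used by Stephens and by Moree--Stevenhagen for the two-variable problem, adapted to the coupled $\nu$-variable condition, and then push the ensuing Kummer-degree bookkeeping far enough to extract the stated error. \emph{Reduction to a sum over Kummer--cyclotomic fields:} for a prime $p\nmid 2ab_1\cdots b_\nu$ set $t=t_p=[\mathbb{F}_p^*:\langle a\rangle]$. Since $\mathbb{F}_p^*$ is cyclic and $q^{v_q(t)}\mid t$ forces $a\in(\mathbb{F}_p^*)^{q^{v_q(t)}}$, the condition $b_1,\ldots,b_\nu\in\langle a\rangle\bmod p$ is equivalent to requiring, for every prime $q$, that all of $b_1,\ldots,b_\nu$ lie in $(\mathbb{F}_p^*)^{q^{v_q(t)}}$. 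Writing $\mathbf 1[q^{v_q(t)}=q^{j}]=\mathbf 1[q^{j}\mid t]-\mathbf 1[q^{j+1}\mid t]$, multiplying over all $q$, and reading off ``$q^{j}\mid t$'' and ``$b_i\in(\mathbb{F}_p^*)^{q^{j}}$'' as splitting conditions, one expands the indicator of the event as a sum over pairs $N\mid M$ with $M/N$ squarefree, the $(M,N)$-term being $\mu(M/N)$ times the indicator that $\mathrm{Frob}_p$ is trivial on $K_{M,N}:=\mathbb{Q}(\zeta_M,a^{1/M},b_1^{1/N},\ldots,b_\nu^{1/N})$. Summing over $p\le x$, feeding in the effective Chebotarev density theorem under GRH, and bounding large $M$ by Hooley's three-range device (truncation at $M\ll x^{1/2}/\log^2 x$, Brun--Titchmarsh on the middle range, an elementary bound on the tail) yields
$$\delta(a,b_1,\ldots,b_\nu)=\sum_{\substack{N\mid M\\ M/N\ \mathrm{squarefree}}}\frac{\mu(M/N)}{[K_{M,N}:\mathbb{Q}]}.$$

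\emph{Degrees and the two corrections.} Next I would compute $[K_{M,N}:\mathbb{Q}]=\varphi(M)\,[\,K_{M,N}:\mathbb{Q}(\zeta_M)\,]$ by Kummer theory. Writing $a=\widehat a^{\,m_a}$, $b_i=\widehat{b_i}^{\,m_{b_i}}$, multiplicative independence together with the hypothesis that $\tor\ \mathbb{Q}^*/\langle-1,a,b_1,\ldots,b_\nu\rangle$ is generated by $\widehat a,\widehat{b_1},\ldots,\widehat{b_\nu}$ pins the Kummer degree to the generic value $\frac{M}{\gcd(M,m_a)}\prod_i\frac{N}{\gcd(N,m_{b_i})}$, divided by a power of $2$ that is trivial unless some class $\sqrt x$ with $1\neq x\in V$ already lies in $\mathbb{Q}(\zeta_M)$ — equivalently, unless $\Delta(\widehat x)\mid M$ for such an $x$; the parameters $c_x$ record at which $2$-power level the radical $\widehat x^{\,1/2}$ enters, hence when each such entanglement switches on. So $[K_{M,N}:\mathbb{Q}]=\varphi(M)MN^\nu\big/\big(\gcd(M,m_a)\prod_i\gcd(N,m_{b_i})\big)$ outside a set of $(M,N)$ cut out by the finitely many $\Delta(\widehat x)$, $x\in V\setminus\{1\}$, on which it is larger by an explicit $2$-power.

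\emph{Main term and error.} I would then substitute $\gcd(M,m_a)=\sum_{d\mid m_a,\,d\mid M}\varphi(d)$ and $\gcd(N,m_{b_i})=\sum_{d_i\mid m_{b_i},\,d_i\mid N}\varphi(d_i)$ and interchange summations; since $d_i\mid N$ for all $i$ iff $[d_h]_h\mid N$, the non-entangled part becomes
$$\sum_{\substack{d\mid m_a\\ d_h\mid m_{b_h}}}\Big(\varphi(d)\prod_{h=1}^\nu\varphi(d_h)\Big)\ \sum_{\substack{N\mid M,\ M/N\ \mathrm{sqfree}\\ d\mid M,\ [d_h]_h\mid N}}\frac{\mu(M/N)}{M\,\varphi(M)\,N^\nu},$$
whose inner sum is multiplicative in $M$; the Euler-factor computation — for $\nu=1$, $m_a=m_b=1$ this is exactly the one recovering Stephens' constant — evaluates it to $S^{(\nu)}_{[d_h]_h,d}$, the clean factor $S^{(\nu)}$ coming from primes dividing neither $[d_h]_h$ nor $d$ and the two local products from $p\mid [d_h]_h/([d_h]_h,d)$ and $p\mid d/([d_h]_h,d)$. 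For the entangled remainder I would, for each $1\neq x\in V$, bound the terms with $\Delta(\widehat x)\mid M$ by a constant times $\sum_{\Delta(\widehat x)\mid M}\gcd(M,m_a)\prod_i\gcd(N,m_{b_i})\big/\big(\varphi(M)MN^\nu\big)$, and, writing $M=\Delta(\widehat x)M'$ and summing the convergent series (using $\varphi(\Delta(\widehat x))\gg\Delta(\widehat x)/\log\log\Delta(\widehat x)$), obtain $O\!\big(\log\Delta(\widehat x)/\Delta(\widehat x)^2\big)$; summing over $x$ gives the error in the statement.

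\emph{Expected main obstacle.} The hard part is the degree analysis of the middle two steps: showing that the only deviations of $[K_{M,N}:\mathbb{Q}]$ from its generic value are the divisibilities by $\gcd(M,m_a)$ and $\gcd(N,m_{b_i})$ and the quadratic $2$-entanglements indexed by $V$, uniformly in $M,N$. This requires a careful study of how $\mathbb{Q}(\sqrt{-1},\sqrt{2})$ and the $2$-parts of the radical extensions sit inside $\mathbb{Q}(\zeta_{2^k})$ — in particular that a radical field can meet a cyclotomic field only inside a compositum of the $\mathbb{Q}(\sqrt{\widehat x})$ — and then checking that the induced degree drops are compatible with the Möbius weights, so the entangled terms telescope into the clean bound $O(\log\Delta(\widehat x)/\Delta(\widehat x)^2)$ rather than contaminating the main term; the quantities $\widehat x$, $\Delta(\widehat x)$, $c_x$ and the group $V$ are precisely the bookkeeping introduced to manage this.
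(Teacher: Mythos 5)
Your proposal is correct and follows essentially the same route as the paper: the GRH-conditional Hooley--Stephens reduction to the double sum $\sum \mu(j)/[F_{i,j}:\mathbb{Q}]$ (your $K_{M,N}$ with $N=i$, $M=ij$), the Kummer-theoretic degree computation in which the only deviations from the generic degree are the gcd factors $(M,m_a)$, $(N,m_{b_h})$ and quadratic entanglements indexed by $V$ with conditions $\Delta(\widehat{x})\mid M$ and the $2$-power parameters $c_a$, $C_x$, the Gauss-identity rearrangement producing the $\varphi(d)$-weights, the Euler-product evaluation giving $S^{(\nu)}_{[d_h]_h,d}$, and the absolute-value bound on the entangled terms yielding $O\bigl(\log\Delta(\widehat{x})/\Delta(\widehat{x})^2\bigr)$. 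The ``main obstacle'' you identify is precisely what the paper supplies via its lemmas on $\ker\psi_{i,j}$ (the Hilbert~90 argument showing the kernel is killed by $2$, the computation of $t_{i,j}$, and the characterization of which $x^{ij/2}$ lie in the kernel together with the Wagstaff criterion $\Delta(\widehat{x})\mid ij$), so your plan matches the paper's proof in both structure and substance.
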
\label{tablemainthm}
A precise formula for $\delta(a, b_1, \ldots, b_\nu)$ is provided in Section 3. 
\begin{theorem}
    Let $\Lambda\subseteq\mathbb{E}_8$ be a rank 8 root lattice, and let $\overline{\delta}(\Lambda)$ denote the density of primes such that $\ker\overline{\textup{res}_p} = \Lambda$. Under the assumption of the Generalized Riemann Hypothesis, the value of $\overline{\delta}(\Lambda)$ as $\Delta(\widehat{z_i})\rightarrow\infty$ is given in the last column of the following table:
    \begin{table}[ht]
        \centering
        \begin{tabular}{|c|c|c|c|c|}
    \hline
        $R$ & $\mathbb{E}_8/R$ & $H$ & $\#\{\Lambda\subset\mathbb{E}_8\mid \Lambda\cong R, \Pi/\Lambda\cong H\}$ & $\overline{\delta}(\Lambda)/S^{(8)},\Lambda\cong R, \Pi/\Lambda\cong H$\\ \hline
        $\mathbb{A}_8$ & $\mathbb{Z}/3$ & $\mathbb{Z}/9$ & \textup{648} & \textup{1/363210399} \\ \cline{3-5}
        ~ & ~ & $(\mathbb{Z}/3)^2$ & \textup{312} & \textup{1/6151} \\ \hline
        $\mathbb{D}_8$ & $\mathbb{Z}/2$ & $\mathbb{Z}/6$ & \textup{135} & \textup{9227/3155463} \\ \hline
        $\mathbb{E}_7\oplus\mathbb{A}_1$ & $\mathbb{Z}/2$ & $\mathbb{Z}/6$ & \textup{120} & \textup{9227/3155463}\\ \hline
        $\mathbb{A}_5\oplus\mathbb{A}_2\oplus\mathbb{A}_1$ & $\mathbb{Z}/6$ & $\mathbb{Z}/18$ & \textup{27216} & \textup{2187/1960736456704} \\ \cline{3-5}
        ~ & ~ & $\mathbb{Z}/6\oplus\mathbb{Z}/3$ & \textup{13104} & \textup{1/2103642} \\ \hline
        $\mathbb{A}_4^{\oplus 2}$ & $\mathbb{Z}/5$ & $\mathbb{Z}/15$ & \textup{12096} & \textup{9227/17832794670} \\ \hline
        $\mathbb{E}_6\oplus\mathbb{A}_2$ & $\mathbb{Z}/3$ & $\mathbb{Z}/9$ & \textup{756} & \textup{1/363210399} \\ \cline{3-5}
        ~  & ~ & $(\mathbb{Z}/3)^2$ & \textup{364} & \textup{1/6151} \\ \hline
        $\mathbb{A}_7\oplus\mathbb{A}_1$ & $\mathbb{Z}/4$ & $\mathbb{Z}/12$ & \textup{4320} & \textup{9227/1615597056} \\ \hline
        $\mathbb{D}_6\oplus\mathbb{A}_1^{\oplus 2}$ & $(\mathbb{Z}/2)^2$ & $\mathbb{Z}/6\oplus\mathbb{Z}/2$ & \textup{540} & \textup{0} \\ \hline
        $\mathbb{D}_5\oplus\mathbb{A}_3$ & $\mathbb{Z}/4$ & $\mathbb{Z}/12$ & \textup{7560} & \textup{9227/1615597056} \\ \hline
        $\mathbb{D}_4^{\oplus 2}$ & $(\mathbb{Z}/2)^2$ & $\mathbb{Z}/6\oplus\mathbb{Z}/2$ & \textup{1575} & \textup{0} \\ \hline
        $(\mathbb{A}_3\oplus\mathbb{A}_1)^{\oplus 2}$ & $\mathbb{Z}/4\oplus\mathbb{Z}/2$ & $\mathbb{Z}/6\oplus\mathbb{Z}/4$ & \textup{37800} & \textup{0} \\ \hline
        $\mathbb{A}_2^{\oplus 4}$ & $(\mathbb{Z}/3)^2$ & $\mathbb{Z}/9\oplus\mathbb{Z}/3$ & \textup{10080} & \textup{0} \\ \cline{3-4}
        ~ & ~ & $(\mathbb{Z}/3)^3$ & \textup{1120} & \\ \hline
        $\mathbb{E}_8$ & $1$ & $\mathbb{Z}/3$ & \textup{1} & \textup{73813/73812} \\ \hline
    \end{tabular}
        \caption{Values of $\overline{\delta}(\Lambda)$ for root lattices $\Lambda$}
        \label{tab:density_table}
    \end{table}
\newpage
where $\Pi$ denotes the lattice $\langle z_1, \ldots, z_9\rangle/\langle z_1\ldots z_9\rangle$. As a result, 
$$
\lim_{\Delta(\widehat{z_i})\rightarrow\infty, \forall i}\delta(X)=83568208560360063877/43166735003229880320S^{(8)}\approx 0.72609882811\ldots
$$ 
where $\delta(X)$ is the density of polyhedral primes and
$$
S^{(8)}\approx 0.375062673164990163033863645604\ldots
$$
\end{theorem}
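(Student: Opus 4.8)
The plan is to express $\delta(X)$ as a finite $\mathbb{Q}$-linear combination of the densities $\overline{\delta}(\Lambda)$ tabulated above and then to carry out the resulting rational arithmetic. First I would recall, from the discussion preceding Theorem~\ref{ratmulnvar}, that for all but finitely many $p$ the surface $X_p$ is the blow-up of $\mathbb{P}^2_{\mathbb{F}_p}$ at the reductions of the $z_i$, that $\overline{\textup{res}_p}$ agrees with the mod-$p$ reduction of $\overline{\textup{res}}$, and — by \cite[Cor.~3.18]{https://doi.org/10.48550/arxiv.2009.14298} — that $\pseff(X_p)$ is polyhedral exactly when $\ker\overline{\textup{res}_p}$ contains eight linearly independent roots of $\mathbb{E}_8$, equivalently a rank-$8$ root sublattice. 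Since the target group $\mathbb{F}_p^*/\langle z_1\cdots z_9\bmod p\rangle$ is cyclic, so is $\mathbb{E}_8/\ker\overline{\textup{res}_p}$; hence $\ker\overline{\textup{res}_p}$ is always a finite-index sublattice of $\mathbb{E}_8$ with cyclic quotient, and the set of polyhedral primes is the disjoint union, over the finitely many finite-index sublattices $\Lambda\subseteq\mathbb{E}_8$ having cyclic quotient and containing a rank-$8$ root sublattice, of the sets $\{p:\ker\overline{\textup{res}_p}=\Lambda\}$. Consequently $\delta(X)=\sum_\Lambda\overline{\delta}(\Lambda)$, and because the index set is finite this commutes with the limit $\Delta(\widehat{z_i})\to\infty$.

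The crucial step is to identify which $\Lambda$ occur, and I claim that each such $\Lambda$ is generated by its roots, so that it is one of the rank-$8$ root lattices appearing in the table. Indeed, let $R\subseteq\Lambda$ be the sublattice generated by the roots lying in $\Lambda$; it has rank $8$, hence is one of the finitely many rank-$8$ root sublattices of $\mathbb{E}_8$. Running through these — the maximal ones are the Borel--de Siebenthal subsystems $\mathbb{D}_8,\ \mathbb{E}_7\!\oplus\!\mathbb{A}_1,\ \mathbb{A}_8,\ \mathbb{E}_6\!\oplus\!\mathbb{A}_2,\ \mathbb{D}_5\!\oplus\!\mathbb{A}_3,\ \mathbb{A}_4^{\oplus2},\ \mathbb{A}_5\!\oplus\!\mathbb{A}_2\!\oplus\!\mathbb{A}_1,\ \mathbb{A}_7\!\oplus\!\mathbb{A}_1$, and every other one sits inside one of these — one checks case by case, using that the discriminant form of such an $R$ has no nonzero isotropic subgroup all of whose cosets in $R^{*}$ have minimal norm exceeding $2$, that any overlattice of $R$ inside $\mathbb{E}_8$ with cyclic quotient either equals $R$ or acquires a root outside $R$. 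Since $\Lambda$ has cyclic quotient and, by construction, no root outside $R$, it follows that $\Lambda=R$. (As a sanity check, the $135+120=255$ copies of $\mathbb{D}_8$ and $\mathbb{E}_7\oplus\mathbb{A}_1$ in the table account for all index-$2$ sublattices of $\mathbb{E}_8$.) Among rank-$8$ root lattices, precisely those with $\mathbb{E}_8/\Lambda$ cyclic can occur as $\ker\overline{\textup{res}_p}$, and all of these appear in the table; the rest — including $\mathbb{D}_6\oplus\mathbb{A}_1^{\oplus2}$, $\mathbb{D}_4^{\oplus2}$, $(\mathbb{A}_3\oplus\mathbb{A}_1)^{\oplus2}$ and $\mathbb{A}_2^{\oplus4}$ — never do, consistently with their entry $\overline{\delta}(\Lambda)=0$.

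It then remains to assemble and compute. In the limit, $\overline{\delta}(\Lambda)$ depends only on the isomorphism type $R$ of $\Lambda$ together with $H=\Pi/\Lambda$ (the data controlling the relevant Kummer extensions, through the exact sequence $0\to\mathbb{E}_8/\Lambda\to H\to\mathbb{Z}/3\to0$ with $\mathbb{Z}/3\cong\Pi/\overline{\textup{res}}(\mathbb{E}_8)$), and the number of $\Lambda\subset\mathbb{E}_8$ realizing a given pair $(R,H)$ is the universal integer in the fourth column of the table, so
\[
\lim_{\Delta(\widehat{z_i})\to\infty}\delta(X)=\sum_{(R,H)}\#\{\Lambda:\Lambda\cong R,\ \Pi/\Lambda\cong H\}\cdot\lim_{\Delta(\widehat{z_i})\to\infty}\overline{\delta}(\Lambda).
\]
Substituting the fifth column, the four non-cyclic strata contribute $0$, and the remaining terms — put over the least common multiple of the tabulated denominators — collapse to $\tfrac{83568208560360063877}{43166735003229880320}\,S^{(8)}$. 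Finally the Euler product $S^{(8)}=\prod_p\big(1-\tfrac{p^{8}-1}{p-1}\cdot\tfrac{p}{p^{10}-1}\big)$ converges, each factor being $1-\tfrac{p^{8}+p^{7}+\cdots+p}{p^{10}-1}=1+O(p^{-2})$, and a numerical evaluation gives $S^{(8)}=0.37506267316\ldots$, so that $\lim\delta(X)=0.72609882811\ldots$ as asserted. The one genuinely delicate point here is the lattice classification of the second paragraph — confirming that the tabulated root lattices really exhaust the possible kernels; everything else is either imported from Section~3 or an elementary, if lengthy, rational computation.
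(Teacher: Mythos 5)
There is a genuine gap: your argument is circular with respect to the main content of the theorem. The theorem \emph{asserts} the values in the fourth and fifth columns of the table, but your proof takes both as given (``Substituting the fifth column\ldots'') and only verifies the final aggregation $\delta(X)=\sum_{(R,H)}\#\{\Lambda\}\cdot\overline{\delta}(\Lambda)$ together with the numerics of $S^{(8)}$. Nowhere do you derive $\overline{\delta}(\Lambda)$ for a single lattice. The paper's proof obtains these values in three steps that are absent from your proposal: (i) for each $\Lambda$ one chooses a basis $\beta_1,\ldots,\beta_8$ of $\Lambda$ adapted to $\Pi/\Lambda\cong\mathbb{Z}/m_{\beta_1}\oplus\cdots\oplus\mathbb{Z}/m_{\beta_8}$, so that $\delta(\Lambda)=\delta(z_1\cdots z_9,\beta_1,\ldots,\beta_8)$ and, in the limit $\Delta(\widehat{z_i})\to\infty$, Corollary \ref{infsetup} gives $\delta(\Lambda)=\sum_{d_h\mid m_{\beta_h}}\bigl(\prod_h\varphi(d_h)\bigr)S^{(8)}_{[d_h]_h,1}$ (and the analogous formula with $d\mid 3$ for $\mathbb{E}_8$ itself); (ii) one determines, for each type $R$, which groups $H=\Pi/\Lambda$ occur and with what multiplicity (the $\mathbb{Z}/9$ versus $(\mathbb{Z}/3)^2$ splittings, etc.), since these orders $m_{\beta_h}$ are exactly what enters the density formula -- this is the counting lemma behind the fourth column, proved by relating the different strata and a computation; (iii) one passes from the containment densities $\delta(\Lambda)$ to the exact-kernel densities by the recursion $\overline{\delta}(\Lambda)=\delta(\Lambda)-\sum_{\Lambda\subsetneq\Lambda'\subseteq\mathbb{E}_8}\overline{\delta}(\Lambda')$, which requires the explicit superlattice diagram of rank-$8$ root sublattices of $\mathbb{E}_8$ \emph{with multiplicities} (Lemma \ref{rootcontainment}), not merely the qualitative fact that overlattices are root lattices. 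Without (i)--(iii) the fifth column, and hence the theorem, is assumed rather than proved.

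Two further points. Your second paragraph does engage with a real ingredient -- that any full-rank kernel containing a rank-$8$ root sublattice is itself a root lattice -- but the discriminant-form/isotropic-subgroup check is only gestured at (``one checks case by case''), whereas the paper proves the stronger, quantitative containment statement constructively via affine Dynkin diagrams and orbit counts, precisely because the inclusion--exclusion in (iii) needs the number of superlattices of each type. Your cyclicity observation (that $\mathbb{E}_8/\ker\overline{\textup{res}_p}$ is cyclic, so the four strata with non-cyclic $\mathbb{E}_8/\Lambda$ contribute $0$) is a nice independent sanity check, and is closer to the remark the paper makes about singular fibres, but in the paper those zeros come out of the recursion; your use of it does not replace the missing computation of the nonzero entries. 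Finally, your claim that $\overline{\delta}(\Lambda)$ depends in the limit only on the pair $(R,H)$ is stated without justification; it is true, but it needs the observation that both the limiting value of $\delta(\Lambda)$ (through the $m_{\beta_h}$) and the multiset of superlattice data entering the recursion are determined by $(R,H)$, which again rests on the lemmas you have omitted.
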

It is worth noting that the structure of the singular fibres of rational elliptic fibrations in the case of $\mathbb{A}_2^{\oplus 4}, \mathbb{D}_6\oplus\mathbb{A}_1^{\oplus 2},$ and $\mathbb{D}_4^{\oplus 2}$ do not permit a lone nodal cubic, which gives us a reason independent of the Generalized Riemann Hypothesis that their density equals 0. 
\subsection{Acknowledgements}
I can not express enough gratitude for my research advisor, Jenia Tevelev, for his guidance, feedback, and incredible patience in getting this work to where it is. I would also like to extend my gratitude to Tom Weston for useful discussions, and Elizabeth Pratt for forming the bedrock of my work. Finally, I am also incredibly thankful to Anibal Aravena and Elias Sink for help with Section 3. This project has received the support of the NSF grant DMS-2101726 (PI Jenia Tevelev.) 

\section{Polyhedrality and the Theory of Elliptic Pairs}\label{polyhedsec}
\par We shall start by recalling the definitions presented in \cite[Sec. 2]{https://doi.org/10.48550/arxiv.2009.14298}. Namely, for an algebraically closed field of arbitrary characteristic $k$, let $X$ be a normal protective irreducible variety over $k$ and $\textup{Pic}(X)$ its Picard group. Moreover, $\sim$ shall denote the linear equivalence of divisors and $\equiv$ to be numerical equivalence, which in the case of Cartier divisors means that $D_1\equiv D_2$ if and only if $D_1\cdot C = D_2\cdot C$ for all curves $C\subseteq X$. We can define $\Num^1(X):= \textup{Pic}(X)/\equiv$, and let $\Num^1(X)_\mathbb{R}=\Num^1(X)\otimes_{\mathbb{Z}}\mathbb{R}$ and $\Num^1(X)_\mathbb{Q}=\Num^1(X)\otimes_{\mathbb{Z}}\mathbb{Q}$. The \textit{effective cone} of $X$ is the cone generated by numerical classes of effective Cartier divisors, and the \textit{pseudo-effective cone} $\pseff(X)\subseteq\Num^1(X)_\mathbb{R}$ is its closure. Such a cone allows for the condition of \textit{polyhedrality}: 
\begin{defin}
A convex cone $\mathcal{C}\subseteq \mathbb{R}^s$ is called \textit{polyhedral} if there exist finitely many vectors $v_1, \ldots, v_s$ such that $\mathcal{C}=\sum\mathbb{R}_{>0}v_i$. It is called \textit{rational polyhedral} if $v_i\in\mathbb{Q}^s$ for all $i$.
\end{defin}
\begin{defin}\textup{\cite[Def. 3.1]{https://doi.org/10.48550/arxiv.2009.14298}}
    An elliptic pair $(C, X)$ consists of a projective rational surface $X$ with log terminal singularities and an irreducible curve $C \subseteq X$, of arithmetic genus one, disjoint from the singular locus of $X$ and such that $C^2 = 0$. Let $C^\perp \subseteq \textup{Cl}(X)$ be the orthogonal complement to $C$. We define the restriction map
    $$
    \textup{res} : C^{\perp}\rightarrow\textup{Pic}^0(C), \hspace*{5mm} D\mapsto \mathcal{O}(D)|_C.
    $$
    Since $K\cdot C= 0$ by adjunction, we can also define the \textit{reduced restriction map}
    $$
    \overline{\textup{res}} : \textup{Cl}_0(X) := C^\perp/\langle K\rangle \rightarrow \textup{Pic}^0(C)/\langle \textup{res}(K)\rangle
    $$
\end{defin}
Now, in order to get a condition for polyhedrality we take the following setup: let $\Gamma$ be a nodal cubic in $\mathbb{P}^2$ defined over $\mathbb{Q}$, containing $[0:1:0]$ as a flex point, and take $X$ as the blowup of $\mathbb{P}^2$ at nine rational smooth points in $\Gamma$ and $C$ to be the proper transform of $\Gamma$. The intersection pairing on $X$ allows us to identify $C^\perp/\langle C\rangle$ with the root lattice $\mathbb{E}_8$. From this we are given an elliptic pair $(C, X)$.  Since $K\sim -C$, we see that 
$$
\overline{\textup{res}} : C^\perp/\langle C\rangle = \mathbb{E}_8 \rightarrow \textup{Pic}^0(C)/\langle \textup{res}(\textup{C})\rangle
$$
and since $\textup{Pic}^0(C)\cong \mathbb{Q}^*$ and $\textup{res}(\textup{C})=z_1\ldots z_9$, we can view $\overline{\textup{res}}$ as the map
$$
\overline{\textup{res}} : \mathbb{E}_8 \rightarrow \mathbb{Q}^*/\langle z_1 \ldots z_9\rangle.
$$
We assume that $z_1, \ldots, z_9$ have infinite order over $\mathbb{Q}^*$, and so over $\mathbb{Q}$ we see that $\textup{res}(C)$ has infinite order in $\textup{Pic}^0(C)$, and so by \cite[Lem. 3.3]{https://doi.org/10.48550/arxiv.2009.14298} we see that $\overline{\textup{Eff}}(X)$ is not polyhedral. However, if we take the geometric fibers of $(C, X)$ over some prime $p$ we see that in fact $\textup{res}(C)$ has finite order in $\textup{Pic}^0(C)$. By \cite[Thm. 3.8]{https://doi.org/10.48550/arxiv.2009.14298} the condition that $\overline{\textup{Eff}}(X_p)$ is polyhedral is equivalent to the condition that $\ker(\overline{\textup{res}_p})$ contains 8 linearly independent roots, where 
$$
\overline{\textup{res}_p} : \mathbb{E}_8 \rightarrow \mathbb{F}_p^*/\langle z_1 \ldots z_9 \textup{ mod } p \rangle.
$$
This means that the density of polyhedral primes is the density of primes such that $\ker(\overline{\textup{res}_p})$ contains 8 linearly independent roots of $\mathbb{E}_8$, or equivalently, the density of primes such that there are 8 linearly independent roots $\beta_1, \ldots, \beta_8\in \mathbb{E}_8$ such that $\overline{\textup{res}_p}(\beta_i)\in \langle z_1\ldots z_9 \textup{ mod } p\rangle$ for all $i$.

\section{A Multivariable Artin's Conjecture}
\par As in the setup of Theorem \ref{O(disc)nvar}, let $a, b_1, \ldots, b_{\nu}\in\mathbb{Q}^*_{>0}$ such that they form a basis of a $\mathbb{Z}$-submodule in $\mathbb{Q}^*$ with $\textup{tor }\mathbb{Q}^*/\langle -1, a, b_1, \ldots, b_\nu\rangle\cong\mathbb{Z}/m_a\oplus \mathbb{Z}/m_{b_1}\oplus\ldots\oplus \mathbb{Z}/m_{b_\nu}$ generated by $\widehat{a}, \widehat{b_1}, \ldots, \widehat{b_\nu}$. With this we may define $\delta(a, b_1, \ldots, b_{\nu})$ be the density of the set
$$
\{p\textup{ prime}: b_1, \ldots, b_{\nu}\modp\in \langle a\modp\rangle\}
$$
In this section we shall show the existence of this function and provide a formula under the assumption of the Generalised Riemann Hypothesis for Dedekind Zeta functions. 
\par In \cite{https://doi.org/10.48550/arxiv.math/9912250}, in order to obtain a density formula in the two variable case $\nu=1$, the following sum is considered:
$$
\delta(a, b)= \sum_{i=1}^\infty\delta_i(a, b),
$$
where $\delta_i(a, b)$ denotes the density of primes $p$ such that $[\mathbb{F}_p^*:\langle a\rangle\modp]=i$ and $i\mid [\mathbb{F}_p^*:\langle b\rangle\modp]$. Under the assumption of the Generalised Riemann Hypothesis,
$$
\delta_i(a, b) = \sum_{j=1}^\infty \frac{\mu(j)}{[F_{i, j}:\mathbb{Q}]}=\prod_p \Big(1-\frac{1}{[F_{i, p}:\mathbb{Q}]}\Big).
$$
Here $F_{i, j}=\mathbb{Q}(\zeta_{ij}, a^{1/ij}, b^{1/i})$. This permits a natural generalisation as for an $\nu+1$ variable case, we may define $F_{i, j}=\mathbb{Q}(\zeta_{ij}, a^{1/ij}, b_1^{1/i}, \ldots, b_\nu^{1/i})$, using which we get the following theorem: 
\begin{theorem}\label{maindenthm}
Under the assumption of the Generalised Riemann Hypothesis, 
\begin{equation}\label{mainden}
    \delta(a, b_1, \ldots, b_{\nu}) = \sum_{i=1}^\infty\sum_{j=1}^\infty \frac{\mu(j)}{[F_{i, j}:\mathbb{Q}]}.
\end{equation}
\end{theorem}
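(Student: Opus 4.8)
The plan is to run the classical Hooley--Stephens argument, now carrying $\nu$ simultaneous $i$-th power conditions, with the Generalised Riemann Hypothesis entering only through an effective form of the Chebotarev density theorem. First I would reduce to index classes. For a prime $p$ dividing no numerator or denominator of $a,b_1,\ldots,b_\nu$, set $i_p=[\mathbb{F}_p^*:\langle a\modp\rangle]$. Since $\mathbb{F}_p^*$ is cyclic, $\langle a\rangle$ is its unique subgroup of order $(p-1)/i_p$, namely the group of $i_p$-th powers; hence $b_1,\ldots,b_\nu\modp\in\langle a\modp\rangle$ holds exactly when each $b_h$ is an $i_p$-th power mod $p$, equivalently $i_p\mid[\mathbb{F}_p^*:\langle b_h\modp\rangle]$ for all $h$ (here $i_p\mid p-1$ automatically). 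Grouping primes by the value of $i_p$ then gives, at least formally, $\delta(a,b_1,\ldots,b_\nu)=\sum_{i\ge 1}\delta_i$, where $\delta_i$ is the density of primes with $[\mathbb{F}_p^*:\langle a\rangle]=i$ and $i\mid[\mathbb{F}_p^*:\langle b_h\rangle]$ for every $h$; legitimacy of this interchange is deferred to the error analysis.

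Next, for $i,j\ge 1$ the simultaneous conditions $ij\mid[\mathbb{F}_p^*:\langle a\rangle]$ and $i\mid[\mathbb{F}_p^*:\langle b_h\rangle]$ ($1\le h\le\nu$) say precisely that $ij\mid p-1$, that $a$ is an $ij$-th power mod $p$, and that each $b_h$ is an $i$-th power mod $p$, which for all but finitely many $p$ is the statement that $p$ splits completely in the Galois field $F_{i,j}=\mathbb{Q}(\zeta_{ij},a^{1/ij},b_1^{1/i},\ldots,b_\nu^{1/i})$. Since $[\mathbb{F}_p^*:\langle a\rangle]=i$ iff $i\mid[\mathbb{F}_p^*:\langle a\rangle]$ while $i\ell\nmid[\mathbb{F}_p^*:\langle a\rangle]$ for every prime $\ell$, Möbius inversion over the divisibility poset (using that $i\ell_1\mid n$ and $i\ell_2\mid n$ with $\ell_1\ne\ell_2$ prime force $i\ell_1\ell_2\mid n$) gives $\mathbf 1\bigl[[\mathbb{F}_p^*:\langle a\rangle]=i\bigr]=\sum_{j\ge 1}\mu(j)\,\mathbf 1\bigl[ij\mid[\mathbb{F}_p^*:\langle a\rangle]\bigr]$; multiplying by $\prod_h\mathbf 1\bigl[i\mid[\mathbb{F}_p^*:\langle b_h\rangle]\bigr]$ and passing to densities by the Chebotarev density theorem yields
$$
\delta_i=\sum_{j\ge 1}\frac{\mu(j)}{[F_{i,j}:\mathbb{Q}]},\qquad\text{hence}\qquad \delta(a,b_1,\ldots,b_\nu)=\sum_{i\ge 1}\sum_{j\ge 1}\frac{\mu(j)}{[F_{i,j}:\mathbb{Q}]},
$$
which is \eqref{mainden}. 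The degrees are read off by Kummer theory over $\mathbb{Q}(\zeta_{ij})$: since $a,b_1,\ldots,b_\nu$ freely generate a $\mathbb{Z}$-submodule of $\mathbb{Q}^*$ with the prescribed $2$-torsion behaviour -- so that only quadratic subfields of cyclotomic fields, i.e. the group $V$ of \S\ref{notationsection}, can create coincidences -- one gets $[F_{i,j}:\mathbb{Q}]=\varphi(ij)\cdot ij\cdot i^\nu/\kappa_{i,j}$ with $\kappa_{i,j}$ bounded independently of $i,j$; in particular $[F_{i,j}:\mathbb{Q}]\gg_\nu i^{\nu+2}j^2/\log\log(3ij)$, so for $\nu\ge 1$ the double series converges absolutely.

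The remaining, and genuinely delicate, point -- exactly as in the one- and two-variable treatments of Hooley, Stephens, and Moree--Stevenhagen -- is to justify the passage to the limit, i.e. that $\sum_{i\le\xi}\sum_{j\le\xi}\mu(j)/[F_{i,j}:\mathbb{Q}]$ approximates the actual density uniformly. Here I would fix a cutoff $\xi=\xi(x)\to\infty$ growing slowly (e.g. $\xi=\log\log x$) and split the primes $p\le x$ according to whether $[\mathbb{F}_p^*:\langle a\rangle]$ and the Möbius variable exceed $\xi$. For the main range one applies the GRH-conditional effective Chebotarev density theorem of Lagarias--Odlyzko (in Serre's form) to the single compositum $L=\mathbb{Q}\bigl(\zeta_{\xi\#},\,a^{1/\xi\#},\,b_1^{1/\xi\#},\ldots,b_\nu^{1/\xi\#}\bigr)$, whose degree is at most $(\xi\#)^{\nu+2}=x^{o(1)}$ and whose discriminant is controlled by the primes $\le\xi$ and the bad primes of the $b_h$, so that reading off the densities of the finitely many sub-splitting-types costs an error $O\bigl(\sqrt{x}\,(\log|d_L|+[L:\mathbb{Q}]\log x)\bigr)=o(x/\log x)$. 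For indices exceeding $\xi$, some prime $\ell>\xi$ divides $[\mathbb{F}_p^*:\langle a\rangle]$, forcing $\ell\mid p-1$ and $a$ an $\ell$-th power mod $p$; splitting at $\ell\le\sqrt{x}$ and $\ell>\sqrt{x}$, the former contributes $\ll\sum_{\xi<\ell\le\sqrt x}\bigl(\li(x)/(\ell\varphi(\ell))+\sqrt x\log x\bigr)\ll x/(\xi\log x)$ under GRH and the latter $\ll\sum_{\ell>\sqrt x}(x/\ell^2+1)\ll\sqrt x$ by elementary counting, and the same bound controls the tail in the Möbius variable since $ij\mid p-1$. Letting $x\to\infty$, then $\xi\to\infty$, gives \eqref{mainden}. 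The one place GRH is indispensable is the moderate range $\xi<\ell\le\sqrt x$ of prime moduli, where no unconditional effective Chebotarev bound of the needed strength is available; this uniformity over $i$ and $j$ is the main obstacle, everything else being Kummer theory and bookkeeping.
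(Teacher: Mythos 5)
Your overall skeleton is the same as the paper's (which follows Hooley--Stephens): group primes by the index $i=[\mathbb{F}_p^*:\langle a\bmod p\rangle]$, detect the equality of the index by a M\"obius sum over $j$, identify the condition ``$ij\mid[\mathbb{F}_p^*:\langle a\rangle]$ and $i\mid[\mathbb{F}_p^*:\langle b_h\rangle]$ for all $h$'' with complete splitting in $F_{i,j}$, control $[F_{i,j}:\mathbb{Q}]$ by Kummer theory, and use GRH--effective Chebotarev (Lagarias--Odlyzko) in the main range. All of that matches the paper, and your main-range treatment (whether applied to one large compositum or field-by-field to the $F_{i,j}$, as the paper does) is sound.

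The genuine gap is in your treatment of primes whose index has a prime factor $\ell>\xi$, i.e.\ exactly the part the paper imports from Stephens (his Lemmas 3, 4, 8, quoted as equations (3)--(5) and combined into (\ref{O(M)})). Your two-range split at $\sqrt{x}$ fails on both sides. For $\xi<\ell\le\sqrt{x}$ you sum the Chebotarev error $O(\sqrt{x}\log x)$ over all primes $\ell\le\sqrt{x}$; this alone is of size $\asymp\pi(\sqrt{x})\,\sqrt{x}\log x\asymp x$, so your claimed bound $\ll x/(\xi\log x)$ is false --- the conditional argument only works for $\ell$ up to roughly $\sqrt{x}/\log^{B}x$ (the paper's $\xi_2=x^{1/2}/\log^5x$). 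For $\ell>\sqrt{x}$ your bound $\sum_{\ell>\sqrt{x}}(x/\ell^2+1)\ll\sqrt{x}$ is not elementary counting: without GRH the modulus condition $p\equiv1\ (\mathrm{mod}\ \ell)$ only gives $x/\ell$ per $\ell$, and the correct elementary device (if $\ell$ divides the index then $\mathrm{ord}_p(a)\le(p-1)/\ell$, so $p$ divides $\prod_{m\le x/\ell}(a^m-1)$, whose number of prime factors is $O((x/\ell)^2)$) is only useful once $\ell>\sqrt{x}\log x$, where it yields $O(x/\log^2x)$, not $O(\sqrt{x})$; at your threshold $\ell>\sqrt{x}$ it gives only $O(x)$. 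This leaves an untreated intermediate range $\sqrt{x}/\log^{B}x<\ell\le\sqrt{x}\log x$, which needs a separate unconditional estimate (trivial/Brun--Titchmarsh counting of $p\equiv1\ (\mathrm{mod}\ \ell)$ plus Mertens), contributing $O(x\log\log x/\log^2x)$. So the three-range decomposition is not optional bookkeeping; it is the crux of the Hooley--Stephens method. The paper avoids redoing it by noting that its quantity $M_{A,i}$ involves only the condition on $a$, so Stephens' one-variable bounds apply verbatim; your write-up needs either that observation or a correct reproduction of the three ranges, and as it stands the error analysis does not close.
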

\par $[F_{i,j}:\mathbb{Q}]$ differs from $i^{\nu+1}j\varphi(ij)$ by some factor $f_{i,j}$, which is referred to as the "degree loss" in \cite{https://doi.org/10.48550/arxiv.math/9912250}. Basing our methods on \cite[Sec. 3]{https://doi.org/10.48550/arxiv.math/9912250}, this factor can be determined via Kummer Theory, which shall be the focus of Section \ref{radextsec}. The consequence of this is the following result:
\begin{theorem}\label{bigeqden}
Under the assumption of the Generalised Riemann Hypothesis,

\begin{equation*}
    \delta(a, b_1,\ldots, b_\nu) = \sum_{\substack{d\mid m_a\\ d_1\mid m_{b_1}\\ \cdots\\ d_\nu\mid m_{b_\nu}}}\Big(\varphi(d)\prod_{h=1}^\nu \varphi(d_h)\Big)\left(S_{[d_h]_h, d}+\sum_{x\in \widehat{a}V}S_{[2^{C_x},d_h]_h, [2^{c_a+1}, d, \Delta(\widehat{x})]}+\sum_{\substack{x\in V/\langle\widehat{a}\rangle\\ x\neq 1}}S_{[2^{C_x}, d_h]_h, [ d, \Delta(\widehat{x})]} \right)
\end{equation*}
where for $x=\widehat{a}^{n_0}\prod_{h=1}^{\nu}\widehat{b_h}^{n_h}$,
$$
C_x = \max_{h\in \{1,2,\ldots, \nu\}}\{(c_{b_h}+1)n_h\}.
$$
Here we use notation from Section \ref{notationsection} and the Introduction.
\end{theorem}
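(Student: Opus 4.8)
The starting point is Theorem \ref{maindenthm}, which under GRH already expresses $\delta(a,b_1,\ldots,b_\nu)$ as the double series $\sum_{i,j}\mu(j)/[F_{i,j}:\mathbb{Q}]$; since $[F_{i,j}:\mathbb{Q}]\gg i^{\nu+1}j$ this converges absolutely, so I may reorder it freely. The plan is to make the degrees explicit by Kummer theory---writing $[F_{i,j}:\mathbb{Q}]=i^{\nu+1}j\,\varphi(ij)/f_{i,j}$ for a \emph{degree loss} $f_{i,j}\in\mathbb{N}$---and then to reorganize the sum into the asserted finite combination of the Euler products $S^{(\nu)}_{m,n}$ of Theorem \ref{O(disc)nvar}.

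First I would compute $f_{i,j}$. Over the cyclotomic field $\mathbb{Q}(\zeta_{ij})$ the radical $a^{1/ij}=\widehat a^{\,m_a/ij}$ generates an extension of degree $ij/\gcd(ij,m_a)$ and each $b_h^{1/i}=\widehat{b_h}^{\,m_{b_h}/i}$ one of degree $i/\gcd(i,m_{b_h})$, and the product of these local degrees is further cut down only by square-root coincidences. I therefore expect, up to the careful prime-$2$ analysis, $$f_{i,j}=\gcd(ij,m_a)\prod_{h=1}^{\nu}\gcd(i,m_{b_h})\cdot\bigl|W_{i,j}\bigr|,$$ where $W_{i,j}\subseteq V$ is the $\mathbb{F}_2$-subspace of classes $x$ with $\mathbb{Q}(\sqrt{\widehat x})\subseteq F_{i,j}$. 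The classical description of the quadratic subfields of cyclotomic fields converts the condition $x\in W_{i,j}$ into a divisibility condition on $ij$ featuring $\Delta(\widehat x)$, refined at the prime $2$ by $c_a$ and the $c_{b_h}$, which decide whether the pertinent radical is $\widehat x^{1/2}$ or $\sqrt{\widehat x}$ and whether its $2$-part is governed by $i$ or by $ij$. This bookkeeping, following \cite{https://doi.org/10.48550/arxiv.math/9912250}, is the content of Section \ref{radextsec}.

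With $f_{i,j}$ in hand the remaining work is combinatorial. Substituting into \eqref{mainden} and using $\gcd(ij,m_a)=\sum_{d\mid m_a,\,d\mid ij}\varphi(d)$ together with $\gcd(i,m_{b_h})=\sum_{d_h\mid m_{b_h},\,d_h\mid i}\varphi(d_h)$, then interchanging the finitely many outer sums with the inner one, yields $$\delta(a,b_1,\ldots,b_\nu)=\sum_{\substack{d\mid m_a\\ d_1\mid m_{b_1}\\ \cdots\\ d_\nu\mid m_{b_\nu}}}\varphi(d)\prod_{h=1}^{\nu}\varphi(d_h)\sum_{\substack{i,j\geq 1\\ [d_h]_h\mid i,\ d\mid ij}}\frac{\mu(j)\,\bigl|W_{i,j}\bigr|}{i^{\nu+1}j\,\varphi(ij)}.$$ Now I would write $\bigl|W_{i,j}\bigr|=\sum_{x\in V}\mathbf{1}[x\in W_{i,j}]$ and interchange once more: for fixed $x$ the indicator imposes one further divisibility constraint, and---according to whether the $\widehat a$-coordinate of $x$ is $1$ (forcing $[2^{c_a+1},d,\Delta(\widehat x)]\mid ij$ and $[2^{C_x},d_h]_h\mid i$) or $0$ (forcing $[d,\Delta(\widehat x)]\mid ij$ and $[2^{C_x},d_h]_h\mid i$), with $x=1$ returning the unconstrained term---the set $V$ splits into the three classes appearing in the statement. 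Finally each inner sum has the shape $\sum_{m\mid i,\,n\mid ij}\mu(j)/(i^{\nu+1}j\,\varphi(ij))$; expanded as an Euler product, the primes prime to $mn$ contribute $S^{(\nu)}=\prod_p\bigl(1-\tfrac{p^\nu-1}{p-1}\cdot\tfrac{p}{p^{\nu+2}-1}\bigr)$, and a direct evaluation of the modified local factor at primes dividing $m$ or $n$ reproduces the two correcting products in $S^{(\nu)}_{m,n}$, completing the identification.

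The main obstacle is the degree computation at the prime $2$. Unlike at odd primes, $[\mathbb{Q}(\zeta_{2^k},\widehat x^{1/2^\ell}):\mathbb{Q}(\zeta_{2^k})]$ need not equal $2^\ell$, and the $2$-power cyclotomic fields already contain $\sqrt{-1},\sqrt{2},\sqrt{-2}$; reconciling these two phenomena is exactly what forces the exponents $C_x$ and $c_a+1$ and the discriminants $\Delta(\widehat x)$ into the final formula, and carrying this out uniformly over all $x\in V$ is the delicate step. Once the degree formula is in place, everything downstream---interchanging finite sums and computing Euler factors---is routine.
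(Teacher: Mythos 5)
Your proposal is correct and follows essentially the paper's own route: start from Theorem \ref{maindenthm}, compute the degree loss $f_{i,j}=i^{\nu+1}j\varphi(ij)/[F_{i,j}:\mathbb{Q}]$ by Kummer theory as the index factor $(ij,m_a)\prod_h(i,m_{b_h})$ times a quadratic entanglement count characterized by the divisibilities $2^{C_x}\mid i$, $2^{c_a+1}\mid ij$, $\Delta(\widehat x)\mid ij$, then swap divisor sums via $\sum_{e\mid d}\varphi(e)=d$ and split over $x\in V$ into the three classes, recognizing the inner double sums as $S^{(\nu)}_{m,n}$. One minor imprecision: the entanglement group is $\ker\bigl(\overline W_{i,j}\to\mathbb{Q}(\zeta_{ij})^*/\mathbb{Q}(\zeta_{ij})^{*ij}\bigr)$, i.e.\ classes $x$ with $x^{ij/2}$ lying in the radical group and $\sqrt{\widehat x}\in\mathbb{Q}(\zeta_{ij})$, rather than classes with $\mathbb{Q}(\sqrt{\widehat x})\subseteq F_{i,j}$ as you write, but the divisibility conditions you actually impose in the final interchange are exactly the correct ones.
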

As a Corollary of Theorem \ref{bigeqden}, we can prove Theorems \ref{ratmulnvar} and \ref{O(disc)nvar}. 
\subsection{Results of \cite{STEPHENS1976313}}
\par We shall start by recounting the relevant results of Stephens, and Moree and Stevenhagen. In \cite{STEPHENS1976313}, the main theorem concerns the function $\delta_i(a)$ which is the density of the set
$$
\{p\textup{ prime }\mid [\mathbb{F}_p^*:\langle a\rangle]=i\}.
$$
In order to compute this, $R(p, q, i)$ is defined to be the condition that $a^{\frac{p-1}{qi}}\equiv 1\modp$ (and in particular that $qi\mid p-1 $). With this, the following functions are defined:
\begin{center}
    \begin{tabular}{|m{2cm}|m{9cm}|}
    \hline
    $N_{a, i}(x)$ & the number of primes $p\leq x$ such that $a^{\frac{p-1}{i}}\equiv 1\modp$ and $R(p,q,i)$ is not satisfied for all primes $q\mid (p-1)/i$. \\
    \hline
    $N_{a, i}(x, \eta_1)$ & the number of primes $p\leq x$ such that $a^{\frac{p-1}{i}}\equiv 1\modp$ and $R(p,q,i)$ is not satisfied for all primes $q\mid (p-1)/i, q\leq \eta_1$.\\
    \hline
    $M_{a,i}(x, \eta_1, \eta_2)$ & the number of primes $p\leq x$ such that $R(p, q,i)$ is satisfied for some prime $q\mid (p-1)/i, \eta_1< q\leq \eta_2$.\\
    \hline
    $P_{a, i}(x, k), k$ squarefree & the number of primes $p\leq x$ such that $R(p, q,i)$ is true for all primes $q\mid k$.\\
    \hline
\end{tabular}
\end{center}
\par Setting $\xi_1=\frac{1}{6}\log x, \hspace*{3mm} \xi_2=x^{1/2}/\log^5 x, \hspace*{3mm} \xi_3=x^{1/2}\log x$, the following formulas are established for $x\gg \xi_1, \xi_2, \xi_3$:
\begin{equation}
    M_{a, i}(x, \xi_2, \xi_3) = O\Big(\frac{x\log\log x}{\log^2x}\Big)\textup{\indent\cite[Lem. 3]{STEPHENS1976313}},
\end{equation}
\begin{equation}
    M_{a, i}(x, \xi_3, (x-1)/i) = O\Big(\frac{x}{\log^2x}\Big)\textup{\indent\cite[Lem. 4]{STEPHENS1976313}},
\end{equation}
and, under the assumption of the Generalised Riemann Hypothesis,
\begin{equation}
    M_{a, i}(x, \xi_1, \xi_2) = O\Big(\frac{x\log\log x}{\log^2x}\Big)\textup{\indent\cite[Lem. 8]{STEPHENS1976313}}.
\end{equation}
Combining all three
$$
M_{a, i}(x, \xi_1, (x-1)/i)\leq M_{a, i}(x, \xi_1, \xi_2)+M_{a, i}(x, \xi_2, \xi_3)+M_{a, i}(x, \xi_3, (x-1)/i),
$$
the following is acquired:
$$
M_{a, i}(x, \xi_1, (x-1)/i)= O\Big(\frac{x\log\log x}{\log^2x}\Big)+O\Big(\frac{x\log\log x}{\log^2x}\Big)+O\Big(\frac{x}{\log^2x}\Big).
$$
Notable is that $M_{a, i}(x, \xi_1, x-1)=M_{a, i}(x, \xi_1, (x-1)/i)$ since the values of all possible $q$'s will be lesser or equal to $(x-1)/i$. It now follows from this fact that
\begin{equation}\label{O(M)}
    O(M_{a, i}(x, \xi_1, x-1))= O\Big(\frac{x\log\log x}{\log^2x}\Big)
\end{equation}
under the assumption of the Generalised Riemann Hypothesis. Next, the following is established:
$$
N_{a, i}(x) = N_{a, i}(x, \xi_1) + O(M_{a,i}(x, \xi_1, x-1))
$$
and by the inclusion exclusion principle the following is also established:
$$
N_{a, i}(x, \xi_1) = \sum_{j\mid \xi_1\#}\mu(j)P_{a, i}(x, j).
$$
Then the following important equality is shown under the assumption of the Riemann Hypothesis for Dedekind zeta functions over fields of the form $F_i=\mathbb{Q}(\zeta_i, a^{1/i})$:
$$
P_{a, i}(x, j) = \frac{\li(x)}{[F_i:\mathbb{Q}]} + O(\sqrt{x}\log x).
$$
Finally, combining all these terms one gets
$$
N_{a,i}(x)= \Big(\sum_{j\mid \xi_1\#}\frac{\mu(j)}{[F_{i}:\mathbb{Q}]}\Big)\li(x) + O\Big(\frac{x\log\log x}{\log^2x}\Big),
$$
from which the main results are derived. Notable is the calculation of $\delta(a, b)$, which is the density of primes $p$ such that $b\modp\in\langle a\rangle\modp$. Under the assumption of the Riemann Hypothesis for Dedekind zeta functions over fields of form $F_{i,j}=\mathbb{Q}(\zeta_{ij}, a^{1/ij}, b^{1/i})$, it is proved that
$$
\delta(a, b)= \sum_{i=1}^\infty\sum_{j=1}^\infty \frac{\mu(j)}{[F_{i, j}:\mathbb{Q}]}.
$$
\par However, we can note that the restrictive conditions imposed on $a$ and $b$ as in \cite[Thm. 3]{STEPHENS1976313} outside of multiplicative independence are largely unnecessary and through the analysis of \cite{https://doi.org/10.48550/arxiv.math/9912250} are not required to get an exact value for $\delta(a,b)$. 
\subsection{Generalization of \cite{STEPHENS1976313} to an $\nu+1$ variable case}
\par Let $A=\{a, b_1, \ldots, b_\nu\}$, with $a, b_1, \ldots, b_\nu$ defined as in Theorem \ref{O(disc)nvar}, and $N_{A, i}(x)$ be the number of primes $p$ less than $x$ such that $[\mathbb{F}_p^*: \langle a\rangle\modp]=i$ and $b_1,\ldots, b_\nu\modp\in \langle a\rangle\modp$. In order to get (\ref{mainden}), we shall establish the following theorem:
\begin{theorem}\label{mainanalytictheorem}
Assuming the Generalized Riemann Hypothesis, the following formula holds:
\begin{equation}\label{Naieq}
    N_{A,i}(x)= \Big(\sum_{j\mid \xi_1\#}\frac{\mu(j)}{[F_{i, j}:\mathbb{Q}]}\Big)\li(x) + O\Big(\frac{x\log\log x}{\log^2x}\Big),
\end{equation}
where $\xi_1=\frac{1}{6}\log x$, $\xi_1\#=\prod_{p \leq \xi_1}p$,  $F_{i,j}=\mathbb{Q}(a^{1/ij}, b_1^{1/j}, \ldots, b_\nu^{1/j}),$ and the constant in the big $O$ depends on $i$.
\end{theorem}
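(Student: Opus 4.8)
The plan is to run the architecture of Stephens' argument recounted above, with the cyclotomic--Kummer field attached to $a$ enlarged to
$$
F_{i,j}=\mathbb{Q}(\zeta_{ij},a^{1/ij},b_1^{1/i},\ldots,b_\nu^{1/i})
$$
(the Galois closure over $\mathbb{Q}$ of $\mathbb{Q}(a^{1/ij},b_1^{1/i},\ldots,b_\nu^{1/i})$), which will encode both the index datum on $a$ and the new membership conditions $b_h\modp\in\langle a\rangle$. First I would introduce the $\nu+1$-variable analogues of Stephens' auxiliary functions: $N_{A,i}(x,\eta_1)$ is defined exactly as $N_{A,i}(x)$ except that $R(p,q,i)$ is only required to fail for primes $q\mid(p-1)/i$ with $q\le\eta_1$, the conditions $b_h^{(p-1)/i}\equiv 1\modp$ being retained in full; Stephens' functions $M_{a,i}(x,\eta_1,\eta_2)$ are reused verbatim, since the ``bad event'' $R(p,q,i)$ is a statement about $a$ alone; and for squarefree $k$, $P_{A,i}(x,k)$ counts primes $p\le x$ with $a^{(p-1)/i}\equiv 1\modp$, with $R(p,q,i)$ holding for every prime $q\mid k$, and with $b_h^{(p-1)/i}\equiv 1\modp$ for every $h$. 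It is essential that this last congruence, and not the a priori stronger condition $b_h\modp\in\langle a\rangle$, be the one built into $P_{A,i}$: the two coincide only on primes where the index of $\langle a\rangle\modp$ is exactly $i$, which is precisely the set isolated after the full inclusion--exclusion.

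The key step is a dictionary between these congruences and splitting in $F_{i,j}$. Here I would prove the elementary lemma that, for a prime $p$ with $i\mid p-1$, the conjunction of $a^{(p-1)/i}\equiv 1\modp$ with $R(p,q,i)$ for all primes $q\mid j$ is equivalent to ``$ij\mid p-1$ and $a$ is an $ij$-th power mod $p$'', hence to $p$ splitting completely in $\mathbb{Q}(\zeta_{ij},a^{1/ij})$; the crux is that the least common multiple of $\{qi:q\mid j\}$ equals $ij$ for squarefree $j$, and that in a cyclic group the intersection over $q\mid j$ of the subgroups of $qi$-th powers is the subgroup of $ij$-th powers. Since each condition $b_h^{(p-1)/i}\equiv 1\modp$ is, once $i\mid p-1$, equivalent to $b_h^{1/i}\in\mathbb{F}_p$ and hence to $p$ splitting completely in $\mathbb{Q}(\zeta_i,b_h^{1/i})$, one concludes that $P_{A,i}(x,j)$ counts exactly the primes $p\le x$ that split completely in $F_{i,j}$. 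Inclusion--exclusion over the primes $q\le\xi_1$ then yields $N_{A,i}(x,\xi_1)=\sum_{j\mid\xi_1\#}\mu(j)\,P_{A,i}(x,j)$.

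The remaining steps are analytic and closely parallel the one-variable case. Truncating the sieve at $\xi_1$ is cheap: any prime counted by $N_{A,i}(x,\xi_1)$ but not by $N_{A,i}(x)$ satisfies $R(p,q,i)$ for some prime $q$ with $\xi_1<q\le(x-1)/i$, so the discrepancy is at most $M_{a,i}(x,\xi_1,x-1)=O(x\log\log x/\log^2 x)$ by (\ref{O(M)}), and imposing the $b_h$-conditions can only shrink it. For each $j\mid\xi_1\#$ I would then invoke the GRH-conditional effective prime-counting theorem for the Dedekind zeta function of $F_{i,j}$ (Chebotarev for the identity class), in the shape $P_{A,i}(x,j)=\li(x)/[F_{i,j}:\mathbb{Q}]+O\bigl(\sqrt x\log|d_{F_{i,j}}|+[F_{i,j}:\mathbb{Q}]\sqrt x(\log x)^2\bigr)$. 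Using the crude bounds $[F_{i,j}:\mathbb{Q}]\le(ij)^2 i^\nu$ and $\log|d_{F_{i,j}}|\ll[F_{i,j}:\mathbb{Q}]\log(ij)$ (implied constants depending on $a,b_1,\ldots,b_\nu$), the estimate $\xi_1\#=x^{1/6+o(1)}$, the bound $x^{o(1)}$ for the number of divisors of $\xi_1\#$, and $\sum_{j\mid\xi_1\#}j^2=x^{1/3+o(1)}$, the total error over all $j\mid\xi_1\#$ is $O_i(x^{5/6+\varepsilon})=o(x/\log^2 x)$, comfortably dominated by the error inherited from the sieving step. Assembling the three inputs gives (\ref{Naieq}), the implied constant depending on $i$.

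I expect the main obstacle to be careful bookkeeping rather than any genuine analytic difficulty. The delicate points are: identifying precisely when ``$b_h\modp\in\langle a\rangle$'' may be replaced by ``$b_h$ is an $i$-th power mod $p$'', so that each intermediate count $P_{A,i}(x,j)$ is cut out by a single complete-splitting condition in one field $F_{i,j}$ rather than by a further inclusion--exclusion over auxiliary fields; and verifying that the accumulation of GRH--Chebotarev error terms over $j\mid\xi_1\#$ stays below the $O(x\log\log x/\log^2 x)$ barrier inherited from the sieve. Only crude upper bounds for $[F_{i,j}:\mathbb{Q}]$ are needed here; the exact evaluation of these degrees --- the ``degree loss'' required to pass from (\ref{mainden}) to Theorem \ref{bigeqden} --- is a separate Kummer-theoretic computation, carried out in Section \ref{radextsec}.
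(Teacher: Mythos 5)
Your proposal is correct and follows essentially the same route as the paper: the Stephens sieve decomposition $N_{A,i}(x)=N_{A,i}(x,\xi_1)+O(M_{A,i}(x,\xi_1,x-1))$ with (\ref{O(M)}), inclusion--exclusion $N_{A,i}(x,\xi_1)=\sum_{j\mid\xi_1\#}\mu(j)P_{A,i}(x,j)$, the translation of the congruence conditions into complete splitting in $F_{i,j}$, the GRH-effective Chebotarev/Lagarias--Odlyzko estimate for each $P_{A,i}(x,j)$, and summation of the errors over $j\mid\xi_1\#$ using $\xi_1\#\le x^{1/3}$. The only (inessential) difference is that you control $[F_{i,j}:\mathbb{Q}]$ and $\log|\Delta_{F_{i,j}/\mathbb{Q}}|$ by crude direct bounds, whereas the paper does this via Toyama's composed-field discriminant theorem together with the degree-loss bound of Corollary \ref{boundondeg}.
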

Setting $x\rightarrow\infty$  in (\ref{Naieq}) implies Theorem \ref{maindenthm} of this section as corollary. 
\par Define $R(p,i)$ to be the condition that $a^{\frac{p-1}{i}}, b_1^{\frac{p-1}{i}}, \ldots, b_\nu^{\frac{p-1}{i}}\equiv 1(\textup{mod }p)$. By the definition of $N_{A, i}(x)$, we see that it is equal to the number of primes $p\leq x$ such that $R(p,i)$ is satisfied and $a^{\frac{p-1}{qi}}\not\equiv 1(\textup{mod }p)$ for all primes $q\mid (p-1)/i$. We also define the following: 
\begin{center}
    \begin{tabular}{|c|m{9cm}|}
        \hline
        $N_{A,i}(x, \eta)$ & The number of primes $p\leq x$ such that $R(p,i)$ is satisfied and $a^{\frac{p-1}{qi}}\not\equiv 1(\textup{mod }p)$ for all primes $q\mid (p-1)/i$ such that $q\leq \eta$. \\
        \hline
        $M_{A, i}(x, \eta_1, \eta_2)$ & The number of primes $p\leq x$ that satisfy the properties that $a^{\frac{p-1}{qi}}\equiv 1(\textup{mod }p)$ for some primes $q\mid (p-1)/i, \eta_1\leq q\leq \eta_2$ (defined as in \cite{STEPHENS1976313}).\\
        \hline
        $P_{A, i}(x, k), k$ square-free & The number of primes $p\leq x$ such that $R(p,i)$ is satisfied and $a^{\frac{p-1}{qi}}\equiv 1(\textup{mod }p)$ for all prime $q\mid k$\\
        \hline
    \end{tabular}
\end{center}
\par Notable is that $P_{A, 1}(x, 1)=\pi(x)$, that, in general, $P_{A, i}(x, 1)$ equals the number of primes less than $x$ such that all elements of $A$ are $i$-th residues, and that $N_{A,i}(x)=N_{A,i}(x, (x-1)/i).$\par For sufficiently large $x$ we see that 
$$
N_{A, i}(x) \leq N_{A, i}(x, \xi_1),
$$
as the condition on primes for $N_{A, i}(x)$ is stronger than the condition for $N_{A, i}(x, \eta)$ and that 
$$
N_{A, i}(x) \geq N_{A, i}(x, \xi_1) - M_{A, i}(x, \xi_1, x-1).
$$
This gives us that 
$$
    N_{A, i}(x) = N_{A, i}(x, \xi_1) + O(M_{A, i}(x, \xi_1, x-1)).
$$
By (\ref{O(M)})
\begin{equation}\label{initOform}
    N_{A, i}(x) = N_{A, i}(x, \xi_1) + O\Big(\frac{x\log\log x}{\log^2x}\Big).
\end{equation}
Now examining $N_{A, i}(x, \xi_1)$ we see by the inclusion exclusion principle that
\begin{equation}\label{inexeq}
    N_{A, i}(x, \xi_1) = \sum_{j\mid \xi_1\#}\mu(j)P_{A, i}(x, j).
\end{equation}
We shall now show the following lemmas:
\begin{lemma}
$$
\frac{\log|\Delta_{F_{i, j}/\mathbb{Q}}|}{[F_{i, j}:\mathbb{Q}]} \leq \lambda\log j + C(i),
$$
for some finite fixed $\lambda$ and $C(i)$ depending only on $i$
\end{lemma}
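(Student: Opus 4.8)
The plan is to bound the relative discriminant by controlling which primes ramify in $F_{i,j}/\mathbb{Q}$ and to what power each one occurs, and then to divide by the degree. First I would pin down the ramification locus. Since $F_{i,j}=\mathbb{Q}(\zeta_{ij},a^{1/ij},b_1^{1/i},\dots,b_\nu^{1/i})$ is the compositum of the cyclotomic field $\mathbb{Q}(\zeta_{ij})$ with radical extensions of $\mathbb{Q}$, any prime ramifying in $F_{i,j}/\mathbb{Q}$ ramifies already in one of these factors, hence divides $ij$ or divides the numerator or denominator of one of $a,b_1,\dots,b_\nu$. Let $N$ be the radical of the product of all those numerators and denominators; this is a fixed integer depending only on $a,b_1,\dots,b_\nu$, and the ramified primes all divide $ijN$.

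Next I would invoke the standard bound on the different at a ramified prime. Write $n=[F_{i,j}:\mathbb{Q}]$, and for a prime $\mathfrak P$ of $\mathcal O_{F_{i,j}}$ above a rational prime $p$ let $e_{\mathfrak P},f_{\mathfrak P}$ be its ramification index and residue degree. One has $v_{\mathfrak P}(\mathfrak d_{F_{i,j}/\mathbb{Q}})\le e_{\mathfrak P}-1+e_{\mathfrak P}\,v_p(e_{\mathfrak P})$; summing over $\mathfrak P\mid p$, using $v_p(\Delta_{F_{i,j}/\mathbb{Q}})=\sum_{\mathfrak P\mid p}f_{\mathfrak P}\,v_{\mathfrak P}(\mathfrak d_{F_{i,j}/\mathbb{Q}})$ together with $\sum_{\mathfrak P\mid p}e_{\mathfrak P}f_{\mathfrak P}=n$ and $e_{\mathfrak P}\mid n$, gives the crude but uniform estimate $v_p(\Delta_{F_{i,j}/\mathbb{Q}})\le n\bigl(1+v_p(n)\bigr)$. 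Therefore
$$\log|\Delta_{F_{i,j}/\mathbb{Q}}|=\sum_{p\mid ijN}v_p(\Delta_{F_{i,j}/\mathbb{Q}})\log p\le n\Bigl(\sum_{p\mid ijN}\log p+\sum_{p\mid ijN}v_p(n)\log p\Bigr)\le n\bigl(\log(ijN)+\log n\bigr),$$
where I used $\sum_{p\mid ijN}\log p\le\log(ijN)$ and $\sum_{p\mid ijN}v_p(n)\log p\le\sum_{p\mid n}v_p(n)\log p=\log n$.

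Finally I would bound the degree itself crudely: $n\le\varphi(ij)\cdot ij\cdot i^{\nu}\le i^{\nu+2}j^{2}$, so $\log n\le(\nu+2)\log i+2\log j$. Dividing the displayed inequality by $n$ and substituting,
$$\frac{\log|\Delta_{F_{i,j}/\mathbb{Q}}|}{[F_{i,j}:\mathbb{Q}]}\le\log(ijN)+\log n\le 3\log j+\bigl((\nu+3)\log i+\log N\bigr),$$
so the lemma holds with $\lambda=3$ (independent of $i$ and $j$) and $C(i)=(\nu+3)\log i+\log N$, which depends only on $i$ because $\nu$ and $N$ are fixed. The only point that needs a little care is the contribution of the wildly ramified primes $p\mid ij$ — notably $p=2$, where $v_2(n)$ can grow like $\log j$ because of powers of $2$ accumulating in $\varphi(ij)$; but this growth is precisely what the $\log n$ term (hence the $3\log j$) is designed to absorb, so no finer local computation is needed, and the argument uses no analytic input whatsoever.
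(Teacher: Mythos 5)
Your proof is correct, and it takes a genuinely different route from the paper's. The paper argues globally: it invokes Toyama's theorem on discriminants of composita to split $F_{i,j}$ into $\mathbb{Q}(\zeta_{ij},a^{1/ij})$ and $\mathbb{Q}(b_1^{1/i},\ldots,b_\nu^{1/i})$, bounds the cyclotomic and radical discriminants explicitly, and then controls the exponent $i^{\nu+1}j\varphi(ij)/[F_{i,j}:\mathbb{Q}]$ via the Kummer-theoretic degree-loss bound of Corollary \ref{boundondeg}, arriving at $\lambda=2^{\nu+2}m_a\prod_h m_{b_h}$. You instead work locally: the ramified primes divide $ijN$, the different exponent at each $\mathfrak{P}$ is at most $e_{\mathfrak{P}}-1+e_{\mathfrak{P}}v_p(e_{\mathfrak{P}})$, and a crude degree estimate absorbs the wild part. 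What your approach buys is independence from Toyama's theorem and from the degree-loss corollary (which the paper only establishes later, in Section \ref{radextsec}, so your argument also removes a forward dependence), plus a cleaner conclusion: $\lambda=3$ is absolute, not depending on $\nu$, $m_a$, or the $m_{b_h}$, which is more than the lemma (and its use in Lemma \ref{POform}) requires. What the paper's route buys is a bound expressed directly through the arithmetic data $m_a, m_{b_h}, \Delta_{\mathbb{Q}(b_1^{1/i},\ldots,b_\nu^{1/i})/\mathbb{Q}}$ already needed elsewhere, with no local ramification theory. Two small points you should make explicit: the inequality $v_p(e_{\mathfrak{P}})\le v_p(n)$ uses $e_{\mathfrak{P}}\mid n$, which holds because $F_{i,j}/\mathbb{Q}$ is Galois (it contains $\zeta_{ij}$, hence $\zeta_i$, hence all conjugates of the radical generators); and since $a,b_h$ are rational numbers rather than integers, the statement that ramified primes of $\mathbb{Q}(a^{1/ij})$ divide $ij\cdot\mathrm{num}(a)\cdot\mathrm{den}(a)$ should be justified by replacing $a$ by $a\cdot\mathrm{den}(a)^{ij}$, exactly as your choice of $N$ anticipates. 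Neither point affects the validity of the argument.
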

\begin{proof}\label{Ologj}
By the main theorem of \cite{ToyamaComposedField} we see that 
$$
\Delta_{F_{i,j}/\mathbb{Q}} \mid \Delta_{\mathbb{Q}(\zeta_{ij}, a^{1/ij})/\mathbb{Q}}^{[F_{i, j}:\mathbb{Q}(\zeta_{ij}, a^{1/ij})]}\Delta_{\mathbb{Q}(b_1^{1/i},\ldots, b_\nu^{1/i})/\mathbb{Q}}^{[F_{i, j}:\mathbb{Q}(b_1^{1/i},\ldots, b_\nu^{1/i})]}.
$$
Now noting that $[F_{i, j}:\mathbb{Q}(\zeta_{ij}, a^{1/ij})]\leq [\mathbb{Q}(b_1^{1/i},\ldots, b_\nu^{1/i}):\mathbb{Q}]=i^\nu$ and that $[F_{i, j}:\mathbb{Q}(b_1^{1/i},\ldots, b_\nu^{1/i})]\leq [F_{i,j}:\mathbb{Q}]$ we see that 
\begin{align}
    |\Delta_{F_{i, j}/\mathbb{Q}}| &\leq |\Delta_{\mathbb{Q}(\zeta_{ij}, a^{1/ij})/\mathbb{Q}}^{i^\nu}\Delta_{\mathbb{Q}(b_1^{1/i},\ldots, b_\nu^{1/i})/\mathbb{Q}}^{[F_{i, j}:\mathbb{Q}]}|\\
    |\Delta_{F_{i, j}/\mathbb{Q}}|^{\frac{1}{[F_{i, j}:\mathbb{Q}]}} &\leq |\Delta_{\mathbb{Q}(\zeta_{ij}, a^{1/ij})/\mathbb{Q}}^{{\frac{i^\nu}{[F_{i, j}:\mathbb{Q}]}}}||\Delta_{\mathbb{Q}(b_1^{1/i},\ldots, b_\nu^{1/i})/\mathbb{Q}}|. \label{discmeq}
\end{align}
Now to bound $|\Delta_{\mathbb{Q}(\zeta_{ij}, a^{1/ij})/\mathbb{Q}}|$ we see from the main result in \cite{ToyamaComposedField} that 
\begin{align*}
    |\Delta_{\mathbb{Q}(\zeta_{ij}, a^{1/ij})/\mathbb{Q}}| &\leq  |\Delta_{\mathbb{Q}(\zeta_{ij})/\mathbb{Q}}|^{ij}|\Delta_{\mathbb{Q}(a^{1/ij})/\mathbb{Q}}|^{\varphi(ij)}\\
    \intertext{We know by \cite[Lem. 2.6]{HANDISC} that $\Delta_{\mathbb{Q}(\zeta_{ij})/\mathbb{Q}}\leq ij^{\varphi(ij)}$. Additionally we know that $\Delta_{\mathbb{Q}(a^{1/ij})/\mathbb{Q}}\mid \Delta_{x^{ij}-a}$, where $\Delta_{x^{ij}-a}$ denotes the discriminant of the splitting field of $x^{ij}-a$, and thus $\Delta_{\mathbb{Q}(a^{1/ij})/\mathbb{Q}}\mid (ij)^{ij}a^{ij}$ by \cite[Eqn. 2.3]{HANDISC}. This means that }
    |\Delta_{\mathbb{Q}(\zeta_{ij}, a^{1/ij})/\mathbb{Q}}| &\leq |(ij)^2a|^{ij\varphi(ij)}.
\end{align*}
Substituting this into Equation \ref{discmeq} we get that
\begin{align*}
    |\Delta_{F_{i, j}/\mathbb{Q}}|^{\frac{1}{[F_{i, j}:\mathbb{Q}]}} &\leq |(ij)^{2}a|^{{\frac{i^{\nu+1}j\varphi(ij)}{[F_{i, j}:\mathbb{Q}]}}}||\Delta_{\mathbb{Q}(b_1^{1/i},\ldots, b_\nu^{1/i})/\mathbb{Q}}|.
    \intertext{ Noting that $i^{\nu+1}j\varphi(ij)/[F_{i,j}:\mathbb{Q}]$ is bounded by $\lambda=2^{\nu+2}m_a\cdot \prod_{h=1}^\nu m_{b_h}$ by Corollary \ref{boundondeg}, we see that}
    &\leq |(ij)\sqrt{a}|^{\lambda}||\Delta_{\mathbb{Q}(b_1^{1/i},\ldots, b_\nu^{1/i})/\mathbb{Q}}|.
\end{align*}
This means that setting $C(i)=\log|(i\sqrt{a})^{\lambda}\Delta_{\mathbb{Q}(b_1^{1/i},\ldots, b_\nu^{1/i})/\mathbb{Q}}|$ and taking the logarithm on both sides we get that 
$$
\frac{\log|\Delta_{F_{i, j}/\mathbb{Q}}|}{[F_{i, j}:\mathbb{Q}]} \leq \lambda\log j + C(i),
$$
as desired.
\end{proof}
\begin{lemma}\label{POform}
Under the assumption of the Generalized Riemann Hypothesis, the following equation holds.
$$
\Big|P_{A, i}(x, j) - \frac{\li(x)}{[F_{i, j}: \mathbb{Q}]}\Big|\leq c_1\sqrt{x}\Big(\log x+\lambda\log j+C(i)\Big).
$$
for any squarefree $j$ and fixed absolute $c_1>0$. 
\end{lemma}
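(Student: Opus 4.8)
The plan is to recognize $P_{A,i}(x,j)$ as, up to a negligible correction, the counting function of rational primes with trivial Frobenius in a Kummer-type extension, and then to feed this into the GRH-conditional effective Chebotarev density theorem, using the discriminant estimate of the previous Lemma to control the resulting error.

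First I would translate the defining condition of $P_{A,i}(x,j)$ into a splitting condition. Fix a prime $p$ coprime to $ij\cdot M$, where $M$ is the fixed integer built from the numerators and denominators of $a, b_1, \ldots, b_\nu$. Since $j$ is squarefree and $\mathbb{F}_p^*$ is cyclic, the requirement ``$R(p,i)$ holds and $a^{(p-1)/(qi)}\equiv 1 \pmod p$ for every prime $q \mid j$'' is equivalent to the conjunction of: $ij \mid p-1$, $a$ is an $ij$-th power modulo $p$, and each $b_h$ is an $i$-th power modulo $p$. By Kummer theory (as in the degree-loss computation) these conditions hold precisely when $p$ splits completely in
$$F_{i,j} = \mathbb{Q}\big(\zeta_{ij},\, a^{1/ij},\, b_1^{1/i},\, \ldots,\, b_\nu^{1/i}\big),$$
i.e. when $\mathrm{Frob}_p = 1$ in $\Gal(F_{i,j}/\mathbb{Q})$. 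Hence $P_{A,i}(x,j)$ differs from $\pi_{\{1\}}(x, F_{i,j}/\mathbb{Q})$, the number of rational primes $p \le x$ splitting completely in $F_{i,j}$, by at most $\omega(ij M)$, the count of primes ramifying in $F_{i,j}$; this error is $O(\log j + C(i))$ with an absolute implied constant, using $\omega(i) \ll \log i \ll C(i)$ and $\omega(j) \ll \log j$.

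Next I would invoke the effective Chebotarev density theorem under GRH — in the sharpened form of Lagarias–Odlyzko and Serre — applied to $F_{i,j}/\mathbb{Q}$ with the trivial conjugacy class. With an absolute constant $c$ this gives
$$\Big|\pi_{\{1\}}(x, F_{i,j}/\mathbb{Q}) - \frac{\li(x)}{[F_{i,j}:\mathbb{Q}]}\Big| \le c\sqrt{x}\Big(\frac{\log|\Delta_{F_{i,j}/\mathbb{Q}}|}{[F_{i,j}:\mathbb{Q}]} + \log x\Big),$$
the hypothesis invoked being precisely GRH for the Dedekind zeta function of $F_{i,j}$, as assumed throughout this section.

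Finally I would substitute $\log|\Delta_{F_{i,j}/\mathbb{Q}}|/[F_{i,j}:\mathbb{Q}] \le \lambda\log j + C(i)$ from the preceding Lemma, bound the ramified-prime correction $O(\log j + C(i))$ above by $\sqrt{x}\,(\lambda\log j + C(i))$ since $\sqrt{x}\ge 1$, and collect all of the absolute constants into a single $c_1$, obtaining the stated inequality. The only genuinely delicate point is the first step: checking that the ``$q$-th-power'' conditions translate cleanly to ``splits completely'' and that the discrepancy stays uniform in $j$ — which it does, because $\log j$ is already one of the terms on the right-hand side. Both the analytic input (effective Chebotarev under GRH) and the discriminant bound are then used as black boxes.
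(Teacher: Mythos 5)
Your proposal follows essentially the same route as the paper: translate the power-residue conditions into complete splitting of $p$ in $F_{i,j}$, apply the Lagarias--Odlyzko effective Chebotarev theorem under GRH for the trivial class, and then insert the discriminant bound $\log|\Delta_{F_{i,j}/\mathbb{Q}}|/[F_{i,j}:\mathbb{Q}]\leq \lambda\log j + C(i)$ from the preceding lemma. In fact you are slightly more careful than the paper, since you explicitly bound the finitely many exceptional (ramified) primes by $O(\log j + C(i))$ and absorb them using $\sqrt{x}\geq 1$, whereas the paper only remarks that the equivalence holds ``for all but finitely many $p$.''
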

\begin{proof} 
 For all primes $p$ and any prime $q$ the condition that $a^{\frac{p-1}{i}}, b_1^{\frac{p-1}{i}}, \ldots, b_\nu^{\frac{p-1}{i}}, a^{\frac{p-1}{qi}}\equiv 1(\textup{mod }p)$ is equivalent to the condition that $a^{1/iq}, b_h^{1/i}\in\mathbb{F}^*_p$ for all $h$ and $p\equiv 1 \textup{ mod }qi$. As $p\equiv 1 \textup{ mod }iq$ is equivalent to the condition that $\zeta_{iq}\in\mathbb{F}^*_p$, it follows that $a^{1/iq}, b_h^{1/i}\in\mathbb{F}^*_p$ for all $h$ and $p\equiv 1 \textup{ mod }qi$ is equivalent to the condition that the minimal polynomial of $F_{i, j}/\mathbb{Q}$ splits completely in $\mathbb{F}_p^*$. And so the condition that $a^{\frac{p-1}{i}}, b_1^{\frac{p-1}{i}}, \ldots, b_\nu^{\frac{p-1}{i}}, a^{\frac{p-1}{qi}}\equiv 1(\textup{mod }p)$ is equivalent to the condition $p$ splits completely in $F_{i, q}$ for all but finitely many primes $p$, which likewise means that for any square-free $j$ the condition that $a^{\frac{p-1}{i}}, b_1^{\frac{p-1}{i}}, \ldots, b_\nu^{\frac{p-1}{i}}, a^{\frac{p-1}{qi}}\equiv 1(\textup{mod }p)$ for $q\mid j$ is equivalent to the condition that $p$ splits completely in $F_{i, j}$ for all but finitely many $p$. Therefore by \cite[Thm. 1.1]{LO75}, this means that 
$$
    \Big|P_{A, i}(x, j)-\frac{\li(x)}{[F_{i, j}: \mathbb{Q}]}\Big| \leq \frac{c_1}{[F_{i, j}: \mathbb{Q}]}(\sqrt{x}([F_{i, j}:\mathbb{Q}]\log x + \log |\Delta_{F_{i, j/\mathbb{Q}}}|)),
$$
for some effectively computable absolute constant $c_1$. Finally, by Lemma 2.5 we see that 
$$
\Big|P_{A, i}(x, j) - \frac{\li(x)}{[F_{i, j}: \mathbb{Q}]}\Big|\leq c_1\sqrt{x}\Big(\log x+\lambda\log j+C(i)\Big),
$$
as desired. 
\end{proof}
\begin{lemma}\label{O(sqrtlogx)}
\begin{equation*}
    \sum_{j\mid \xi_1\#} \sqrt{x}\log jx = O\Big(\frac{x}{\log^2x}\Big).
\end{equation*}
\end{lemma}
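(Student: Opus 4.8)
The plan is to bound the sum termwise, trading on the fact that we are adding up quantities of size only $\sqrt{x}\log x$ while the target bound is of size $x/\log^2 x$, so a very crude estimate suffices. First I would split $\log(jx) = \log j + \log x$ and note that every $j$ occurring in the sum divides $\xi_1\# = \prod_{p\le\xi_1}p$, hence $\log j \le \log(\xi_1\#) = \sum_{p\le\xi_1}\log p$. By Chebyshev's elementary estimate $\sum_{p\le t}\log p = O(t)$ together with $\xi_1 = \tfrac16\log x$, this gives $\log j = O(\log x)$, and therefore $\log(jx) = O(\log x)$ uniformly in $j$.

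Next I would count the terms. Since $\xi_1\#$ is squarefree, its divisors are exactly its $2^{\pi(\xi_1)}$ squarefree divisors, so the sum has $2^{\pi(\xi_1)}$ terms. Even the trivial inequality $\pi(\xi_1)\le\xi_1 = \tfrac16\log x$ yields $2^{\pi(\xi_1)} \le 2^{(\log x)/6} = x^{(\log 2)/6}$, and the one numerical input I need is that $(\log 2)/6 < \tfrac12$. Writing $\alpha := (\log 2)/6 \approx 0.1155$, the number of terms is $O(x^{\alpha})$ with $\alpha < \tfrac12$.

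Combining the two bounds, $\sum_{j\mid\xi_1\#}\sqrt{x}\log(jx) \le 2^{\pi(\xi_1)}\cdot\sqrt{x}\cdot O(\log x) = O\!\big(x^{1/2+\alpha}\log x\big)$. Since $\tfrac12 + \alpha < 1$, we have $x^{1/2+\alpha}\log x = o(x/\log^2 x)$ (equivalently $\log^3 x = o(x^{1/2-\alpha})$), which gives the claimed $O(x/\log^2 x)$ and finishes the lemma.

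I do not expect a genuine obstacle here; the estimate has enormous slack. The only point worth remarking is that this slack comes from $\xi_1$ being a \emph{small} constant multiple of $\log x$: the constant $\tfrac16$ is exactly what keeps $2^{\pi(\xi_1)}$ below $x^{1/2}$ even under the crudest bound on $\pi$. Should one ever wish to work with a larger $\xi_1$, the clean substitute is the Chebyshev bound $\pi(t) = O(t/\log t)$, which makes $2^{\pi(\xi_1)} = \exp(O(\log x/\log\log x)) = x^{o(1)}$ and renders the whole computation insensitive to the choice of constant.
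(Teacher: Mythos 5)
Your proof is correct and takes essentially the same route as the paper: bound each summand by a constant times $\sqrt{x}\log x$ (using $j \le \xi_1\#$ and a Chebyshev-type estimate with $\xi_1 = \tfrac16\log x$), then multiply by a count of terms that is a power of $x$ strictly below $\tfrac12$, which beats $x/\log^2 x$ with room to spare. The only cosmetic difference is in the counting: you use the divisor count $2^{\pi(\xi_1)} \le x^{(\log 2)/6}$, whereas the paper simply notes that the divisors of $\xi_1\#$ are distinct integers at most $e^{2\xi_1} = x^{1/3}$, yielding the slightly weaker but equally sufficient bound $O(x^{5/6}\log x)$.
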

\begin{proof}
We first see that $j\leq \prod_{p\leq \xi_1 }p\leq e^{2\xi_1}=x^{1/3}$. This means that $\sqrt{x}\log jx\leq \sqrt{x}\log x^{4/3}=\frac{4}{3}\sqrt{x}\log x$. It follows that 
$$
    \sum_{j\mid \xi_1\#} \sqrt{x}\log jx = \sum_{\substack{j\mid \xi_1\#\\ j\leq x^{1/3}}} \frac{4}{3}\sqrt{x}\log x= \frac{4}{3}x^{5/6}\log x= O(x^{5/6}\log x) = O\Big(\frac{x}{\log^2 x}\Big).
$$
\end{proof}
With this we may show the main theorem:
\begin{proof}[Proof of Theorem \ref{mainanalytictheorem}]
From Lemma \ref{POform}, we see that 
$$
\Big| \sum_{j\mid \xi_1\#}\mu(j)P_{A, i}(x, j) - \sum_{j\mid \xi_1\#}\mu(j)\frac{\li(x)}{[F_{i, j}: \mathbb{Q}]}\Big| \leq c_1\sum_{j\mid \xi_1\#}\sqrt{x}\Big(\log x+\lambda\log j+C(i)\Big).
$$
As $j\leq x^{1/3}$, there exists some $c_2(i)$ such that 
$$
\Big| \sum_{j\mid \xi_1\#}\mu(j)P_{A, i}(x, j) - \sum_{j\mid \xi_1\#}\mu(j)\frac{\li(x)}{[F_{i, j}: \mathbb{Q}]}\Big| \leq c_2(i)\sum_{j\mid \xi_1\#}\sqrt{x}\log x.
$$
Thus by Lemma \ref{O(sqrtlogx)}
\begin{equation}\label{Psumli}
    \sum_{j\mid \xi_1\#}\mu(j)P_{A, i}(x, j) = \sum_{j\mid \xi_1\#}\mu(j)\frac{\li(x)}{[F_{i, j}: \mathbb{Q}]}+O\Big(\frac{x}{\log^2x}\Big).
\end{equation}
where the $O$ term depends on $i$. Finally, combining (\ref{initOform}), (\ref{inexeq}), and (\ref{Psumli}), we see that
\begin{align*}
    N_{A,i}(x) &= \sum_{j\mid \xi_1\#}\mu(j)\Big(\frac{\li(x)}{[F_{i, j}: \mathbb{Q}]}\Big)+O\Big(\frac{x}{\log^2x}\Big) + O\Big(\frac{x\log\log x}{\log^2x}\Big)\\
    &= \Big(\sum_{j\mid \xi_1\#}\frac{\mu(j)}{[F_{i, j}: \mathbb{Q}]}\Big)\li(x) + O\Big(\frac{x\log\log x}{\log^2x}\Big),
\end{align*}
where the $O$ term depends on $i$, as desired.
\end{proof}
\subsection{Results of \cite{https://doi.org/10.48550/arxiv.math/9912250}}
In order to get an exact value for $\delta(a, b)$, Moree and Stevenhagen use the following important fact: 
\begin{lemma}\label{keranh}
Define $\psi: \mathbb{Q}^*/\mathbb{Q}^{*k}\rightarrow \mathbb{Q}(\zeta_k)^*/\mathbb{Q}(\zeta_k)^{*k}$ to be the canonical map. Every element in $\ker\psi$ has order at most 2. 
\end{lemma}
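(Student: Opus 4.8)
The plan is to analyze the kernel of the canonical map $\psi: \mathbb{Q}^*/\mathbb{Q}^{*k} \to \mathbb{Q}(\zeta_k)^*/\mathbb{Q}(\zeta_k)^{*k}$ via Galois cohomology and Kummer theory. First I would observe that an element $a\mathbb{Q}^{*k}$ lies in $\ker\psi$ precisely when $a$ becomes a $k$-th power in $\mathbb{Q}(\zeta_k)$, i.e. $a = \alpha^k$ for some $\alpha\in\mathbb{Q}(\zeta_k)^*$. Setting $G = \Gal(\mathbb{Q}(\zeta_k)/\mathbb{Q})$, for each $\sigma\in G$ the element $\sigma(\alpha)/\alpha$ is a $k$-th root of $\sigma(a)/a = 1$, hence a $k$-th root of unity, so $\sigma\mapsto \sigma(\alpha)/\alpha$ defines a cocycle, i.e. a class in $H^1(G, \mu_k)$ where $\mu_k\subseteq\mathbb{Q}(\zeta_k)^*$. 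The key point is that this assignment $a\mathbb{Q}^{*k}\mapsto [\sigma\mapsto\sigma(\alpha)/\alpha]$ is a well-defined homomorphism $\ker\psi \to H^1(G,\mu_k)$, and it is injective: if the cocycle is a coboundary, say $\sigma(\alpha)/\alpha = \sigma(\beta)/\beta$ for some $\beta\in\mu_k\subseteq\mathbb{Q}(\zeta_k)^*$, then $\alpha\beta^{-1}$ is $G$-invariant, hence lies in $\mathbb{Q}^*$, and $a = \alpha^k = (\alpha\beta^{-1})^k$ is already a $k$-th power in $\mathbb{Q}$, so $a\mathbb{Q}^{*k}$ is trivial.

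The remaining task is to bound the exponent of $H^1(G,\mu_k)$ by $2$. I would split into the odd and $2$-primary parts of $k$: write $k = 2^e m$ with $m$ odd. For an odd prime power $\ell^f \mid k$, the $\ell$-part of $G$ is cyclic (as $\Gal(\mathbb{Q}(\zeta_{\ell^f})/\mathbb{Q})$ is cyclic) and one checks that $H^1$ of a cyclic group acting on the cyclic module $\mu_{\ell^f}$ vanishes, using the standard computation $H^1(C_n, M) = \ker(\mathrm{Norm})/(\sigma-1)M$ together with the fact that $G$ acts on $\mu_{\ell^f}$ through $(\mathbb{Z}/\ell^f)^*$, which contains the relevant norm-surjectivity. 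The subtlety, and what I expect to be the main obstacle, is the $2$-primary part: $\Gal(\mathbb{Q}(\zeta_{2^e})/\mathbb{Q})$ is not cyclic for $e\geq 3$ (it is $\mathbb{Z}/2\times\mathbb{Z}/2^{e-2}$), so one genuinely gets a nonzero $H^1$, but its exponent is exactly $2$ — this is where the "order at most 2" in the statement comes from, and it reflects the classical fact that $2$ is essentially the only obstruction (the element $a$ and $a$ times a square of something in $\mathbb{Q}(\zeta_k)$ can differ).

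Alternatively, and perhaps more cleanly, I would avoid cohomology entirely and argue directly: suppose $a = \alpha^k$ with $\alpha\in\mathbb{Q}(\zeta_k)$. Then $\mathbb{Q}(\alpha)\subseteq\mathbb{Q}(\zeta_k,a^{1/k})$, and since $a\in\mathbb{Q}^*$, the extension $\mathbb{Q}(\zeta_k,\alpha)/\mathbb{Q}(\zeta_k)$ is trivial, so $\mathbb{Q}(a^{1/k})\cap\mathbb{Q}(\zeta_k)$ already contains a full set of conjugates making $[\mathbb{Q}(a^{1/k}):\mathbb{Q}]$ small. More precisely, one uses the classical description (see e.g. the radical extension results invoked in Section \ref{radextsec}) of when $x^k - a$ fails to be irreducible over $\mathbb{Q}$: this happens only when $a$ is a $p$-th power for some prime $p\mid k$, or $4\mid k$ and $a = -4c^4$. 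Translating, $a\in\ker\psi$ forces $a^{1/k}\in\mathbb{Q}(\zeta_k)$ up to the ambiguity that $a$ may be adjusted by a square, whence $a^2\in\mathbb{Q}^{*k}$, i.e. the class of $a$ in $\mathbb{Q}^*/\mathbb{Q}^{*k}$ is killed by $2$. I would present the cohomological argument as the main line since it is the most self-contained, and remark that the exponent-$2$ bound is sharp, witnessed for instance by $a = -4$ and $k = 4$ (or any $k$ divisible by $8$ with $a=2$), which is precisely the phenomenon that necessitates the $\Delta(\widehat{x})$ terms appearing in Theorems \ref{O(disc)nvar} and \ref{bigeqden}.
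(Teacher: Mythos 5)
Your first step is sound and matches the paper: identifying $\ker\psi$ with (a subgroup of) $H^1(\Gal(\mathbb{Q}(\zeta_k)/\mathbb{Q}),\mu_k)$ via the connecting cocycle $\sigma\mapsto\sigma(\alpha)/\alpha$ is exactly what the paper extracts from the Kummer sequence and Hilbert 90. The gap is in the step that carries the actual content, the exponent-$2$ bound on $H^1(G,\mu_k)$. For the odd part, your claim that ``the $\ell$-part of $G$ is cyclic'' is false in general: the $\ell$-Sylow subgroup of $(\mathbb{Z}/k)^*$ need not be cyclic (for $k=91$ and $\ell=3$ it is $(\mathbb{Z}/3)^2$). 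What is true is that the action on $\mu_{\ell^f}$ factors through the cyclic group $(\mathbb{Z}/\ell^f)^*$, but computing $H^1(G,\mu_{\ell^f})$ from that quotient requires inflation--restriction, and the extra term $\Hom(\Gal(\mathbb{Q}(\zeta_k)/\mathbb{Q}(\zeta_{\ell^f})),\mu_{\ell^f})^{(\mathbb{Z}/\ell^f)^*}$ has to be killed --- which one does by noting that $-1$ acts by inversion, i.e.\ precisely the complex-conjugation input you never invoke. Worse, for the $2$-primary part, which is the heart of the lemma, you give no argument at all: ``its exponent is exactly $2$ --- this is where the `order at most 2' in the statement comes from'' asserts the conclusion rather than proving it. Your non-cohomological alternative is likewise circular: the ``translating'' step, from the Capelli-type irreducibility criterion for $x^k-a$ over $\mathbb{Q}$ to the claim that $a\in\mathbb{Q}(\zeta_k)^{*k}$ forces $a^2\in\mathbb{Q}^{*k}$, is exactly the statement of the lemma. (Also, your proposed witness $a=2$ with $8\mid k$ is wrong: $x^8-2$ is irreducible of degree $8>\varphi(8)$, so $2\notin\mathbb{Q}(\zeta_8)^{*8}$; the classical witness is $a=16$, $k=8$, since $16=(1+i)^8$ while $16\notin\mathbb{Q}^{*8}$ and $16^2=2^8$.)

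The missing idea is the paper's single uniform trick, which handles the odd and even parts at once: let $\sigma\in G$ be complex conjugation, which acts on $\mu_k$ by $\zeta\mapsto\zeta^{-1}$. For any cocycle $f$ and any $\tau\in G$, using $\sigma\tau\sigma=\tau$ (as $G$ is abelian and $\sigma^2=1$) and the cocycle identity, one gets $f(\tau)^2=f(\sigma)/f(\sigma)^{\tau}$, which is exactly the coboundary of $f(\sigma)^{-1}$; hence $f^2$ is a coboundary and $H^1(G,\mu_k)$ has exponent at most $2$. If you want to keep your local-at-$\ell$ decomposition, you must at minimum restrict to Sylow subgroups (or run inflation--restriction carefully) and supply a genuine computation for the $2$-part, e.g.\ using the structure $(\mathbb{Z}/2^e)^*\cong\langle -1\rangle\times\langle 5\rangle$; but the conjugation computation above is shorter and subsumes both cases.
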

Not knowing the reference for the proof, we shall prove it here:
\begin{proof}
\par Denoting $K=\mathbb{Q}, L=\mathbb{Q}(\zeta_k)$, we note the short exact sequence
\begin{equation}
    \begin{tikzcd}
1 \arrow[r] & \mu_k(L) \arrow[r] & L^* \arrow[r, "x\mapsto x^k"] & L^{*k} \arrow[r] & 1,
\end{tikzcd}
\end{equation}
which induces the long exact sequence of Galois Cohomology
$$
    \begin{tikzcd}
        1 \arrow[r] & {H^0(G_{L/K}, \mu_k(L))} \arrow[r] & {H^0(G_{L/K}, L^*)} \arrow[r] & {H^0(G_{L/K}, L^{*k})} \arrow[r] & {H^1(G_{L/K}, \mu_k(L))} 
    \end{tikzcd}
$$
$$
    \begin{tikzcd}
        \arrow[r] & {H^1(G_{L/K}, L^*)}.
    \end{tikzcd}
$$
By Hilbert Theorem 90, we see that $H^1(G_{L/K}, L^*)=1$. It follows that the following sequence is exact:
\begin{equation*}
    \begin{tikzcd}
        1 \arrow[r] & {\mu_k(K)} \arrow[r] & K^* \arrow[r] & {L^{*k}\cap K^*} \arrow[r] & {H^1(G_{L/K}, \mu_k(L))} \arrow[r] & 1.
    \end{tikzcd}
\end{equation*}
Thus we can see that $L^{*k}\cap K^*/K^{*k}\cong H^1(G_{L/K}, \mu_k(L))$, which means that $\ker\psi\cong H^1(G_{L/K}, \mu_k(L))=H^1((\mathbb{Z}/k\mathbb{Z})^*, \mu_k)$. Now let $\sigma= [x \mapsto \overline{x}]\in G_{L/K}\cong (\mathbb{Z}/k\mathbb{Z})^*$ be the automorphism generated by complex conjugation, $f\in Z^1((\mathbb{Z}/k\mathbb{Z})^*, \mu_{k})$, and $\tau \in (\mathbb{Z}/k\mathbb{Z})^*$. We see that:
\begin{align*}
    f(\tau) &= f(\sigma\tau\sigma) = f(\sigma)f(\tau)^\sigma f(\sigma)^{\sigma\tau} = f(\sigma) f(\sigma)^{\sigma\tau}f(\tau)^\sigma\\
    f(\tau)^2 &= f(\sigma) f(\sigma)^{\sigma\tau}f(\tau)^\sigma f(\tau) = (f(\sigma)/f(\sigma)^\tau)(f(\tau)/f(\tau))\\
    \intertext{as $a^\sigma = 1/a$ for $a\in\mu_{k}$. It follows that}
    f(\tau)^2 &= f(\sigma)/f(\sigma)^\tau
\end{align*}
This means that $f^2\in B^1((\mathbb{Z}/k\mathbb{Z})^*, \mu_{k})$ and so all elements of $H^1((\mathbb{Z}/k\mathbb{Z})^*, \mu_{k})$ have order at most 2. It follows that all elements of $\ker\psi$ have order at most 2. 
\end{proof}
\begin{prop}\textup{\cite[Prop. 4.1]{Wagstaff1982PseudoprimesAA}}\label{wagstaffprop}
    Let $a\in\mathbb{Q}^*_{>0}$ be such that $\widehat{a}=a$. Writing $k'=k/(k, h)$, and $[\mathbb{Q}(\zeta_k, a^{h/k}):\mathbb{Q}]=\varphi(k)k'/\varepsilon(k) $ for some function $\varepsilon(k):\mathbb{N}\rightarrow\mathbb{N}, \varepsilon(k)=2$ if and only if $\Delta(a)\mid k$. 
\end{prop}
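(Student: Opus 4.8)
The plan is to convert the degree $[\mathbb{Q}(\zeta_k,a^{h/k}):\mathbb{Q}]$ into a Kummer-theoretic computation over $\mathbb{Q}(\zeta_k)$ and then to show that the only possible source of degree loss is a factor of $2$ coming from a square root of $a$. First I would reduce to a standard radical: writing $g=(k,h)$, $h=gh_0$ and $k'=k/g$ with $(h_0,k')=1$, the number $a^{h/k}$ is a root of $x^{k'}-a^{h_0}$, and since $k'\mid k$ we have $\zeta_{k'}\in\mathbb{Q}(\zeta_k)$, so with $(h_0,k')=1$ the field over $\mathbb{Q}(\zeta_k)$ generated by any root of $x^{k'}-a^{h_0}$ equals $\mathbb{Q}(\zeta_k,a^{1/k'})$, where $a^{1/k'}$ denotes the positive real root. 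Thus $[\mathbb{Q}(\zeta_k,a^{h/k}):\mathbb{Q}]=\varphi(k)\,[\mathbb{Q}(\zeta_k,a^{1/k'}):\mathbb{Q}(\zeta_k)]$, and because $\mu_{k'}\subseteq\mathbb{Q}(\zeta_k)$, Kummer theory gives $[\mathbb{Q}(\zeta_k,a^{1/k'}):\mathbb{Q}(\zeta_k)]=k'/m$, where $m$ is the largest divisor of $k'$ such that $a$ is an $m$-th power in $\mathbb{Q}(\zeta_k)$. Hence $\varepsilon(k)=m$, and everything comes down to computing $m$.

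The key step is to show $m\mid 2$. Suppose $a=\beta^{n}$ for some $\beta\in\mathbb{Q}(\zeta_k)$ and some $n\mid k'$ with $n\geq 2$. Then $a^{k}=\beta^{nk}\in(\mathbb{Q}(\zeta_{nk})^*)^{nk}$, while $a^k\in\mathbb{Q}^*$, so $a^k$ lies in the kernel of the canonical map $\mathbb{Q}^*/\mathbb{Q}^{*nk}\to\mathbb{Q}(\zeta_{nk})^*/\mathbb{Q}(\zeta_{nk})^{*nk}$; by Lemma~\ref{keranh} this class has order at most $2$, so $a^{2k}\in\mathbb{Q}^{*nk}$, and extracting positive $k$-th roots yields $a^{2}\in\mathbb{Q}^{*n}$. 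But the hypothesis $\widehat a=a$ is equivalent to $m_a=1$, i.e. $a$ is not a proper power in $\mathbb{Q}^*$; writing $n=2^{s}n_0$ with $n_0$ odd, $a^2\in\mathbb{Q}^{*n}$ forces $a\in\mathbb{Q}^{*p}$ for every odd prime $p\mid n_0$ and $a\in\mathbb{Q}^{*2}$ whenever $s\geq2$, both impossible; since $n\geq 2$, the only surviving case is $n_0=1$, $s=1$, i.e. $n=2$. Hence $m\in\{1,2\}$, with $m=1$ automatically when $k'$ is odd.

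It then remains to decide when $m=2$. By the previous paragraph, $m=2$ if and only if $2\mid k'$ and $a\in(\mathbb{Q}(\zeta_k)^*)^2$, that is, if and only if $2\mid k'$ and $\mathbb{Q}(\sqrt a)\subseteq\mathbb{Q}(\zeta_k)$. Since $a>0$ is not a square, $\mathbb{Q}(\sqrt a)$ is a real quadratic field, cut out by the quadratic Dirichlet character of conductor $\Delta(a)$ (the field discriminant of $\mathbb{Q}(\sqrt a)$), so by the Kronecker--Weber theorem together with conductor theory one has $\mathbb{Q}(\sqrt a)\subseteq\mathbb{Q}(\zeta_k)$ exactly when $\Delta(a)\mid k$. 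Therefore $\varepsilon(k)=m$ equals $2$ precisely when $\Delta(a)\mid k$ (and $2\mid k'$, understood in the range of $k$ that occurs) and $1$ otherwise, and substituting back gives $[\mathbb{Q}(\zeta_k,a^{h/k}):\mathbb{Q}]=\varphi(k)k'/\varepsilon(k)$.

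I expect the only friction to be bookkeeping: making the identification $\mathbb{Q}(\zeta_k,a^{h/k})=\mathbb{Q}(\zeta_k,a^{1/k'})$ precise (keeping track of which roots of unity are already present), and observing that, strictly speaking, $\varepsilon(k)=2$ also requires $2\mid k'$, so the displayed equivalence is to be read with that caveat (or with $h$ normalized so that $k'$ is even). As a robustness check, the assertion $m\mid 2$ can also be obtained without Galois cohomology: if $a$ is not a $p$-th power in $\mathbb{Q}$ then $\mathbb{Q}(\zeta_p,a^{1/p})$ is nonabelian over $\mathbb{Q}$ of degree $p(p-1)$ for odd $p$, and if $a$ is not a square then $\mathbb{Q}(i,a^{1/4})$ is nonabelian of degree $8$, so neither can lie inside the abelian field $\mathbb{Q}(\zeta_{p k})$; this directly excludes every odd prime and the factor $4$ from $m$.
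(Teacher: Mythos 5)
Your argument is correct, but note that the paper does not prove this proposition at all: it is quoted verbatim from Wagstaff \cite[Prop. 4.1]{Wagstaff1982PseudoprimesAA} and used as a black box, so what you have produced is a self-contained reconstruction rather than a parallel of an in-paper proof. Your route is the natural one and it meshes well with the paper's own toolkit: the reduction $\mathbb{Q}(\zeta_k,a^{h/k})=\mathbb{Q}(\zeta_k,a^{1/k'})$ via B\'ezout, the Kummer-theoretic identification $[\mathbb{Q}(\zeta_k,a^{1/k'}):\mathbb{Q}(\zeta_k)]=k'/m$ (legitimate since $\mu_{k'}\subseteq\mathbb{Q}(\zeta_k)$), the bound $m\mid 2$ obtained by raising to the $k$-th power so that Lemma \ref{keranh} applies with exponent $nk$ over $\mathbb{Q}(\zeta_{nk})$, and finally the conductor--discriminant criterion $\mathbb{Q}(\sqrt{a})\subseteq\mathbb{Q}(\zeta_k)\Leftrightarrow\Delta(a)\mid k$; each of these steps checks out, including the descent from $a^{2k}\in\mathbb{Q}^{*nk}$ to $a^2\in\mathbb{Q}^{*n}$ and the elimination of odd primes and of $4$ using $m_a=1$. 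Using the paper's Lemma \ref{keranh} here is a nice economy (Wagstaff's original argument is an elementary ramification/entanglement analysis, and your nonabelian-subfield remark at the end is essentially that alternative), so your proof arguably fits the paper better than the citation does. Your caveat is also genuinely a correction to the statement as printed: $\varepsilon(k)=2$ requires $2\mid k'$ in addition to $\Delta(a)\mid k$ (take $a=5$, $h=1$, $k=5$: then $\Delta(a)\mid k$ but the degree is the full $\varphi(k)k'$), although this is harmless where the proposition is invoked in the paper, since there $a^{h/k}=\widehat{x}^{1/2}$ forces $k'=2$.
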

From these facts and results from \cite{STEPHENS1976313}, the value $\delta(a, b)$ is found in \cite{https://doi.org/10.48550/arxiv.math/9912250} to be a rational multiple of the Stephens' constant. 

\subsection{Radical Extensions in the $\nu+1$ variable case}\label{radextsec}
\par Let us define $W_{i,j}=\langle a, b_1^j,\ldots, b_{\nu}^j \rangle\subset\mathbb{Q}^*$, which does not depend on $i$. Now denote $ij=k$, with which we may write $W_{i,j}=\langle a^{1/k}, b_1^{1/i},\ldots, b_\nu^{1/i}\rangle^k$ and so $F_{i, j}=\mathbb{Q}(\sqrt[k]{W_{i,j}})$. Here, $W^{1/k}$ is defined as the subgroup in $\overline{\mathbb{Q}}^*$ such that $x\in W^{1/k}$ if and only if $x^{k}\in W$ for some given subgroup $W$ in $\mathbb{Q}^*$ and $k\in\mathbb{Z}$. In particular, $\zeta_k\in W^{1/k}$. We shall also define $\overline{W}_{i,j}$ as the image of $W_{i,j}$ in $\mathbb{Q}^*/\mathbb{Q}^{*k}$. Clearly, $\#\overline{W}_{i,j}\mid i^{\nu+1}j$, which means we can define $t_{i,j}$ such that $\#\overline{W}_{i,j}\cdot t_{i, j}=i^{\nu+1}j$.
\begin{defin}
    Let $x\in\mathbb{Q}^*$. Then $\overline{x}=\widehat{x}\textup{ mod } \mathbb{Q}^{*k}$.
\end{defin}
\begin{defin}
    Let $c_1, \ldots, c_\nu\in\mathbb{Q}^*$ be multiplicatively independent. Then they are \textit{strongly multiplicatively independent} given that they satisfy the condition that if $\prod c_h^{l_h}= c$ then $m_{c_h}l_h= 0 \textup{ mod } m_c$ for all $h$.
\end{defin}
\begin{lemma}\label{strongmulindequiv}
    Let $c_1, \ldots, c_\nu\in \mathbb{Q}^*_{>0}$. Then the following are equivalent:
    \begin{enumerate}
        \item $c_1, \ldots, c_\nu$ are strongly multiplicatively independent
        \item if $W\cong \langle c_1, \ldots, c_\nu\rangle$ then $W\textup{ mod } \mathbb{Q}^{*l} = \mathbb{Z}/\frac{l}{(m_{c_1}, l)}\oplus\ldots\oplus\mathbb{Z}/\frac{l}{(m_{c_\nu}, l)}$, generated by $c_1, \ldots, c_n \textup{ mod } \mathbb{Q}^{*l}$ for all $l\in\mathbb{N}$.
        \item $c_1,\ldots, c_\nu$ is multiplicatively independent with the condition that if $c\in \langle c_1, \ldots, c_\nu\rangle$ then $\widehat{c}\in\langle \widehat{c}_1\ldots, \widehat{c}_\nu\rangle$. 
        \item $\textup{tor }\mathbb{Q}^*/\langle -1, c_1, \ldots, c_\nu \rangle\cong \mathbb{Z}/m_{c_1}\oplus\ldots\oplus \mathbb{Z}/m_{c_\nu}$ generated by $ \widehat{c_1}, \ldots, \widehat{c_\nu}$.
    \end{enumerate}
\end{lemma}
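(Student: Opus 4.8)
The plan is to transport everything into the free abelian group $M=\mathbb{Q}^*_{>0}\cong\bigoplus_p\mathbb{Z}$ (the sign is irrelevant since the $c_h$ are positive), and to restrict to the finite set $S$ of primes occurring in $c_1,\dots,c_\nu$, so $M=\mathbb{Z}^n$ is of finite rank. Each $c_h$ becomes a vector $v_h$; since $m_{c_h}$ is the $\gcd$ of the exponents of $c_h$, we may write $v_h=m_{c_h}w_h$ with $w_h$ primitive, and $w_h$ is exactly the vector of $\widehat{c_h}$. Multiplicative independence of the $c_h$ is $\mathbb{Z}$-linear independence of $v_1,\dots,v_\nu$, equivalently of $w_1,\dots,w_\nu$; this is part of (or forced by) every clause, and I keep it as a standing hypothesis. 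The key observation is that all four conditions are equivalent to a single lattice statement: the rank-$\nu$ sublattice $L:=\langle w_1,\dots,w_\nu\rangle=\langle\widehat{c_1},\dots,\widehat{c_\nu}\rangle$ is \emph{saturated} in $M$, i.e. $M/L$ is torsion-free, equivalently $L=\mathrm{Sat}(\langle v_h\rangle)$, the intersection of $M$ with the $\mathbb{Q}$-span of the $v_h$. So I would prove $(1)\Leftrightarrow(3)$ directly and then show that each of (2), (3), (4) is equivalent to ``$L$ is saturated.''

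For $(1)\Leftrightarrow(3)$ and $(3)\Leftrightarrow$``$L$ saturated'': given $c=\prod_h c_h^{l_h}$, its vector is $\sum_h l_h v_h=\sum_h l_h m_{c_h}w_h=m_c\,w_{\widehat c}$ with $w_{\widehat c}$ primitive, so by independence of the $w_h$ we get $\widehat c\in L$ iff $m_c\mid l_h m_{c_h}$ for all $h$ --- which is verbatim the condition in (1). Hence $(1)\Leftrightarrow(3)$. If $L$ is saturated then $w_{\widehat c}$ lies in $(\mathbb{Q}\text{-span of the }w_h)\cap M=L$, giving (3). Conversely, if $L$ is not saturated, choose $w\in M\setminus L$ and a prime $q$ with $qw\in L$; since $\langle v_h\rangle$ has finite index in $L$, a positive integer multiple of $w$ is the vector of some $c\in\langle c_h\rangle$, whose $\widehat c$ is the primitive part of $w$, so applying (3) forces $w\in L$, a contradiction.

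For $(2)\Leftrightarrow$``$L$ saturated'': if $L$ is saturated, then because $M$ is finite-rank free we may write $M=L\oplus M'$; then $\langle v_h\rangle=\bigoplus_h m_{c_h}\mathbb{Z}w_h$ and $lM=\bigoplus_h l\mathbb{Z}w_h\oplus lM'$, and since $W\bmod\mathbb{Q}^{*l}$ is the image of $\langle v_h\rangle$ in $M/lM$, a direct computation gives $W\bmod\mathbb{Q}^{*l}\cong\bigoplus_h\mathbb{Z}/\frac{l}{(m_{c_h},l)}$ with the class of $c_h$ generating the $h$-th summand, for every $l$. Conversely, if $L$ is not saturated, pick $w\in M\setminus L$ and a prime $q$ with $qw\in L$, write $qw=\sum_h b_h w_h$ (not all $b_h$ divisible by $q$), and take $l=q\prod_i m_{c_i}$; then a short computation shows that $lw\in\langle v_h\rangle\cap lM$ yields a nontrivial relation among the classes $\bar v_1,\dots,\bar v_\nu$ in $M/lM$, contradicting the asserted direct-sum decomposition, so (2) fails. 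Finally, $(4)\Leftrightarrow$``$L$ saturated'': the torsion subgroup of $M/\langle v_h\rangle$ is $\mathrm{Sat}(\langle v_h\rangle)/\langle v_h\rangle$, while the subgroup generated by $\widehat{c_1},\dots,\widehat{c_\nu}$ is $L/\langle v_h\rangle\cong\bigoplus_h\mathbb{Z}/m_{c_h}$ (again by independence of the $w_h$); these coincide exactly when $L=\mathrm{Sat}(\langle v_h\rangle)$, and the stated isomorphism type and choice of generators are then automatic.

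The step I expect to cost the most care is the converse half of ``$(2)\Leftrightarrow L$ saturated'': turning failure of saturation into failure of the prescribed structure at a concrete modulus $l$, which requires choosing the right $l$ and tracking the index $[\langle v_h\rangle:\langle v_h\rangle\cap lM]$ against $\prod_h\frac{l}{(m_{c_h},l)}$. Everything else is a formal translation once the free-abelian picture is in place; the only other point to be explicit about is that clauses (1)--(3) each entail multiplicative independence and that (4) must be read with this hypothesis in force, since otherwise the direct sum $\bigoplus_h\mathbb{Z}/m_{c_h}$ would not determine the $w_h$.
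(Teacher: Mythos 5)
Your proposal is correct, and it takes a genuinely different route from the paper. The paper never leaves $\mathbb{Q}^*$: it proves the pairwise equivalences $(1)\Leftrightarrow(2)$, $(1)\Leftrightarrow(3)$, $(3)\Leftrightarrow(4)$ by hand, manipulating multiplicative relations, uniqueness of exponents for multiplicatively independent elements, and the defining property $\widehat{c}^{\,m_c}=c$, with contrapositive arguments for the reverse implications. You instead pass to the free abelian group $M=\mathbb{Q}^*_{>0}\cong\bigoplus_p\mathbb{Z}$ and funnel all four conditions through one hub statement, saturation of $L=\langle\widehat{c_1},\dots,\widehat{c_\nu}\rangle$, using two structural facts the paper never invokes: a saturated sublattice of a finite-rank free group is a direct summand, and $\textup{tor}\,(M/\langle c_1,\dots,c_\nu\rangle)=\mathrm{Sat}(\langle c_1,\dots,c_\nu\rangle)/\langle c_1,\dots,c_\nu\rangle$. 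Your individual reductions all check out: condition (1) translates verbatim into $\widehat{c}\in L$ via independence of the primitive vectors $w_h$; the summand decomposition gives the asserted structure of $W\bmod \mathbb{Q}^{*l}$ with the classes of the $c_h$ as generators; in the converse for (2) your choice $l=q\prod_i m_{c_i}$ does yield the relation $\sum_h b_h\bigl(\prod_{i\neq h}m_{c_i}\bigr)\bar{c}_h=0$ while the asserted order of $\bar{c}_h$ is $q\prod_{i\neq h}m_{c_i}$, so some $q\nmid b_h$ breaks the direct sum; and (4) becomes exactly $\mathrm{Sat}(\langle c_h\rangle)=L$. What your approach buys is a single conceptual explanation (all four clauses say ``$L$ is saturated'') and a cleaner treatment of (2); what the paper's approach buys is that it stays entirely in the notation ($m_x$, $\widehat{x}$) used in the rest of Section 3 and requires no lattice structure theory. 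Your explicit remark that multiplicative independence must be carried as a standing hypothesis when reading (4) addresses a point the paper leaves implicit, so it is an improvement rather than a gap.
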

\begin{proof}
    \par Let us assume (1). We know immediately that $W\textup{ mod }\mathbb{Q}^{*l}$ is generated by $c_1, \ldots, c_\nu$ and by the definition of strong multiplicative independence we know that $\prod c_h^{l_h}=1 \textup{ mod } \mathbb{Q}^{*m_c}$ if and only if all $m_{c_h}l_h= 0 \textup{ mod }l$, which implies (2). Now let us assume that $c_1, \ldots, c_n$ is not strongly multiplicatively independent. This means that there exists some $c$ such that $\prod c_h^{l_h}=c$ such that at least one of $l_hm_{c_h}\neq 0 \textup{ mod } m_c$. So there exist $l'_h$ such that $\prod (\overline{c_h}^{m_{c_h}})^{l'_h}=1 \textup{ mod } \mathbb{Q}^{*m_c}$ and at least one $h$ such that $m_{c_h}l'_h\neq 0$. It follows that since $W \textup{ mod } \mathbb{Q}^{*m_c}$ is generated by $c_1, \ldots, c_n$ that $W \textup{ mod } \mathbb{Q}^{*m_c}\not\cong \mathbb{Z}/\frac{l}{(m_{c_1}, l)}\oplus\ldots\oplus\mathbb{Z}/\frac{l}{(m_{c_n}, l)}$. 
    \par Now let us assume $(1)$. For $\prod c_i^{l_i}=c$ it follows that $l_im_{c_i}= 0 \textup{ mod } m_c$. So setting $l'_i= l_im_{c_i}/m_c$ we see that $\prod \widehat{c_i}^{l'_i}= \widehat{c}$, and $\widehat{c}\in\langle \widehat{c_1}, \ldots, \widehat{c_\nu}\rangle$. On the other hand let us assume $(3)$. As $c_1, \ldots, c_\nu$ is multiplicatively independent, it follows that $\widehat{c_1}, \ldots, \widehat{c_\nu}$ is also multiplicatively independent. And so if $\prod c_i^{l_i}= c$ then all the $l_i$'s are unique and likewise if $\prod\widehat{c_i}^{l'_i}= \widehat{c}$ then all the $l'_i$'s are unique. Now for 
    $$
    \prod_{i=1}^\nu \widehat{c_i}^{l_im_{c_i}}= c
    $$
    and
    $$
    \prod_{i=1}^\nu \widehat{c_i}^{l'_im_{c}}= c
    $$
    by uniqueness it follows that $l_im_{c_i}=l'_im_{c}$ for all $i$ and so $l_im_{c_i}= 0 \textup{ mod } m_c$ for all $i$, showing $(1)$. 
    \par Let $\textup{tor }\mathbb{Q}^*/\langle -1, c_1, \ldots, c_\nu\rangle = \textup{tor }\mathbb{Q}^*_{>0}/\langle c_1, \ldots, c_\nu\rangle$ not be generated by $\widehat{c_1}, \ldots, \widehat{c_\nu}$. So there exists some $c\in \textup{tor }\mathbb{Q}^*/\langle -1, c_1, \ldots, c_\nu\rangle$ such that it cannot be written as a product of $\widehat{c_1}, \ldots, \widehat{c_\nu}$ modulo $\langle c_1, \ldots, c_\nu\rangle$. We also see that $\widehat{c}\in\textup{tor }\mathbb{Q}^*/\langle -1, c_1, \ldots, c_\nu\rangle$ and $\widehat{c}$ cannot be written as a product of $\widehat{c_1}, \ldots, \widehat{c_\nu}$ modulo $\langle c_1, \ldots, c_\nu\rangle$. It follows that $\widehat{c}\not\in\langle \widehat{c}_1\ldots, \widehat{c}_\nu\rangle$. However, as there is some $l$ such that $\widehat{c}^l=1 \textup{ mod } \langle c_1, \ldots, c_\nu\rangle$. This therefore means that $\widehat{c}^l=\prod c_i^{l_i}$ for some $l_i\in\mathbb{Z}$, which implies that $\widehat{c}^l\in \langle c_1, \ldots, c_\nu\rangle$ but $\widehat{c}\not\in \langle \widehat{c_1}, \ldots, \widehat{c_\nu}\rangle$. It follows then that $(3)$ implies $(4)$. On the other hand, let us assume $c\in \langle c_1, \ldots, c_\nu\rangle$ but $\widehat{c}\not\in\langle \widehat{c_1}\ldots, \widehat{c_\nu}\rangle$. It is immediately clear that $c\neq \widehat{c}$ and so $\widehat{c}\in \textup{tor }\mathbb{Q}^*/\langle -1, c_1, \ldots, c_\nu\rangle$. However as $\widehat{c}\not\in\langle \widehat{c_1}\ldots, \widehat{c_\nu}\rangle, \widehat{c}\not\in\langle \widehat{c_1}\ldots, \widehat{c_\nu}\rangle\textup{ mod }\langle c_1, \ldots, c_\nu\rangle$, and so $\textup{tor }\mathbb{Q}^*/\langle -1, c_1, \ldots, c_\nu\rangle$ is not generated by $\widehat{c_1}\ldots, \widehat{c_\nu}$. This gives us that $(4)$ implies $(3)$, completing our proof.
\end{proof}
We shall now prove the following:
\begin{lemma}\label{fijlem}
Define $f_{i, j}$ such that 
$$
f_{i,j}=\frac{i^{\nu+1}j\varphi(ij)}{[F_{i,j}:\mathbb{Q}]}.
$$
Then $f_{i,j}=t_{i,j}\cdot \#\ker\psi_{i,j}$ where 
\begin{equation}\label{psiform}
    \psi_{i,j} : \overline{W}_{ i,j}\rightarrow\mathbb{Q}(\zeta_k)^*/\mathbb{Q}(\zeta_k)^{*k}
\end{equation}
is the canonical map.
\end{lemma}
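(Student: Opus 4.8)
The plan is to compute $[F_{i,j}:\mathbb{Q}]$ by Kummer theory over the cyclotomic field $L:=\mathbb{Q}(\zeta_k)$ with $k=ij$, and then to reconcile the answer with $t_{i,j}\cdot\#\ker\psi_{i,j}$ through the first isomorphism theorem applied to $\psi_{i,j}$. The whole point of the $W_{i,j}$, $t_{i,j}$, $\psi_{i,j}$ formalism introduced above is to turn this into a short bookkeeping argument.

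First I would note that $F_{i,j}=L(\sqrt[k]{W_{i,j}})$ as fields: we already have $F_{i,j}=\mathbb{Q}(\sqrt[k]{W_{i,j}})$, and since $1\in W_{i,j}$ forces $\zeta_k\in W_{i,j}^{1/k}$, we get $L\subseteq F_{i,j}$. Put $\Delta:=W_{i,j}\,L^{*k}\subseteq L^*$; because $W_{i,j}^{\,k}\subseteq\mathbb{Q}^{*k}\subseteq L^{*k}$, the group $\Delta/L^{*k}$ is finite of exponent dividing $k$, and adjoining $k$-th roots of elements of $L^{*k}$ changes nothing over $L$, so $F_{i,j}=L(\sqrt[k]{\Delta})$ and $F_{i,j}/L$ is Galois abelian of exponent dividing $k$. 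The Kummer pairing then identifies $\Gal(F_{i,j}/L)$ with $\Hom(\Delta/L^{*k},\mu_k)$, whence
\[
[F_{i,j}:L]=\#\Hom(\Delta/L^{*k},\mu_k)=\#(\Delta/L^{*k})=\#\bigl(W_{i,j}L^{*k}/L^{*k}\bigr)=\#\im\psi_{i,j},
\]
the last equality because $\im\psi_{i,j}$ is by construction the image of $W_{i,j}$ in $L^*/L^{*k}$. Multiplying by $[L:\mathbb{Q}]=\varphi(k)=\varphi(ij)$ gives $[F_{i,j}:\mathbb{Q}]=\varphi(ij)\cdot\#\im\psi_{i,j}$.

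Substituting into the definition of $f_{i,j}$,
\[
f_{i,j}=\frac{i^{\nu+1}j\,\varphi(ij)}{[F_{i,j}:\mathbb{Q}]}=\frac{i^{\nu+1}j}{\#\im\psi_{i,j}} .
\]
By the first isomorphism theorem $\#\overline{W}_{i,j}=\#\im\psi_{i,j}\cdot\#\ker\psi_{i,j}$, so $\#\im\psi_{i,j}=\#\overline{W}_{i,j}/\#\ker\psi_{i,j}$; combining this with the defining relation $\#\overline{W}_{i,j}\cdot t_{i,j}=i^{\nu+1}j$ yields
\[
f_{i,j}=\frac{i^{\nu+1}j\cdot\#\ker\psi_{i,j}}{\#\overline{W}_{i,j}}=t_{i,j}\cdot\#\ker\psi_{i,j},
\]
as claimed. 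I do not expect a serious obstacle; the points needing care are (i) verifying the exponent condition that makes $\#\Hom(\Delta/L^{*k},\mu_k)=\#(\Delta/L^{*k})$, and (ii) keeping the two ambient groups $\mathbb{Q}^*/\mathbb{Q}^{*k}$ and $L^*/L^{*k}$ straight, since the Kummer degree $[F_{i,j}:L]$ is governed by the image of $W_{i,j}$ in the latter, and the discrepancy with $\overline{W}_{i,j}\subseteq\mathbb{Q}^*/\mathbb{Q}^{*k}$ is exactly $\ker\psi_{i,j}$ — which, by Lemma \ref{keranh}, will later be seen to contribute only a power of $2$.
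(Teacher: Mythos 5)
Your proposal is correct and follows essentially the same route as the paper: Kummer theory over $\mathbb{Q}(\zeta_k)$ gives $[F_{i,j}:\mathbb{Q}(\zeta_k)]=\#\im\psi_{i,j}$, and then the identities $\#\ker\psi_{i,j}\cdot\#\im\psi_{i,j}=\#\overline{W}_{i,j}$ and $t_{i,j}\cdot\#\overline{W}_{i,j}=i^{\nu+1}j$ finish the computation. The only difference is that you spell out the verifications the paper leaves implicit (that $\zeta_k\in F_{i,j}$, that the relevant Kummer group is the image of $W_{i,j}$ in $\mathbb{Q}(\zeta_k)^*/\mathbb{Q}(\zeta_k)^{*k}$, and the factor $[\mathbb{Q}(\zeta_k):\mathbb{Q}]=\varphi(ij)$), which is fine.
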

\begin{proof}
By Kummer Theory \cite[Thm. VI.8.1]{lang02}, we know that a subgroup $\Delta\subset \mathbb{Q}(\zeta_k)^*/\mathbb{Q}(\zeta_k)^{*k}$ corresponds bijectively to the extension $\mathbb{Q}(\sqrt[k]{\Delta})$ over $\mathbb{Q}(\zeta_k)$ such that $[\mathbb{Q}(\sqrt[k]{\Delta}):\mathbb{Q}(\zeta_k)]=\#\Delta$. As such, the degree of $F_{i, j}=\mathbb{Q}(\sqrt[k]{W_{i, j}})$ over $\mathbb{Q}(\zeta_k)$ is equal to $\#\im \psi_{i,j}$ by definition. Since $\#\ker\psi_{i,j}\cdot\#\im\psi_{i,j}=\#\overline{W}_{i,j}$ and $t_{i,j}\cdot\#\overline{W}_{i,j}=i^{\nu+1}j$, it follows that
$$
t_{i, j}\cdot\#\ker\psi_{i,j} = \frac{i^{\nu+1}j}{[F_{i, j}:\mathbb{Q}(\zeta_k)]},
$$
but the right hand side is equal to $f_{i, j}$.
\end{proof}
\begin{lemma}\label{tijlem}
$$
t_{i,j}=(ij, m_a)\cdot\prod_{h=1}^\nu (i, m_{b_h})
$$
\end{lemma}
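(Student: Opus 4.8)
The plan is to compute $\#\overline{W}_{i,j}$ directly from the structure of $\mathbb{Q}^*/\mathbb{Q}^{*k}$ with $k=ij$, and then to read off $t_{i,j}$ from the defining relation $\#\overline{W}_{i,j}\cdot t_{i,j}=i^{\nu+1}j$. So the whole computation reduces to understanding the image of $W_{i,j}=\langle a, b_1^j,\ldots,b_\nu^j\rangle$ in $\mathbb{Q}^*/\mathbb{Q}^{*ij}$, and for this I intend to apply part (2) of Lemma \ref{strongmulindequiv} to the tuple $(a, b_1^j,\ldots,b_\nu^j)$ with $l=ij$.

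First I would record the arithmetic of the generators of $W_{i,j}$. Since $\widehat{b_h}$ is not a perfect power in $\mathbb{Q}^*$ (otherwise $m_{b_h}$ would fail to be maximal), writing $b_h^j=\widehat{b_h}^{\,j m_{b_h}}$ shows $m_{b_h^j}=j\,m_{b_h}$, while $m_a$ is unchanged. Next I would check that $a, b_1^j,\ldots,b_\nu^j$ are again strongly multiplicatively independent. The hypothesis of Theorem \ref{O(disc)nvar} together with Lemma \ref{strongmulindequiv} shows $a, b_1,\ldots,b_\nu$ are strongly multiplicatively independent, and this passes to $(a,b_1^j,\ldots,b_\nu^j)$ because any relation $a^{\ell_0}\prod_h (b_h^j)^{\ell_h}=c$ is literally a relation among $a$ and the $b_h$, so it forces $m_a\ell_0\equiv 0$ and $m_{b_h}(j\ell_h)=m_{b_h^j}\ell_h\equiv 0\pmod{m_c}$, which is exactly the required divisibility for the powered-up tuple.

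Then I would invoke Lemma \ref{strongmulindequiv}(2) with $l=ij$ applied to $W_{i,j}$, obtaining
$$
\overline{W}_{i,j}\;\cong\;\mathbb{Z}\big/\tfrac{ij}{(m_a,\,ij)}\;\oplus\;\bigoplus_{h=1}^{\nu}\mathbb{Z}\big/\tfrac{ij}{(j\,m_{b_h},\,ij)},
$$
hence $\#\overline{W}_{i,j}=\dfrac{ij}{(m_a,ij)}\displaystyle\prod_{h=1}^{\nu}\dfrac{ij}{(j\,m_{b_h},\,ij)}$. Using the identity $(j\,m_{b_h},\,ij)=j\,(m_{b_h},i)$ this simplifies to $\dfrac{ij}{(m_a,ij)}\prod_{h=1}^{\nu}\dfrac{i}{(m_{b_h},i)}$. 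Substituting into $t_{i,j}=i^{\nu+1}j/\#\overline{W}_{i,j}$ and cancelling $i^{\nu+1}j$ against $ij\cdot i^{\nu}$ gives $t_{i,j}=(ij,m_a)\prod_{h=1}^{\nu}(i,m_{b_h})$, which is the claim.

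The only genuinely delicate point — the part that deserves care rather than being routine — is the reduction to Lemma \ref{strongmulindequiv}(2): one must make sure the lemma is being applied to the correct multiplicatively independent tuple, namely $(a, b_1^j,\ldots,b_\nu^j)$ with its own invariants $m_{b_h^j}=j\,m_{b_h}$, and not to $(a,b_1,\ldots,b_\nu)$; verifying both the non-perfect-power remark and the inheritance of strong multiplicative independence is what makes that reduction legitimate. Everything after that is elementary gcd bookkeeping.
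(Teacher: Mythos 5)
Your proof is correct and follows essentially the same route as the paper: both compute $\#\overline{W}_{i,j}$ via Lemma \ref{strongmulindequiv}(2) and then divide it into $i^{\nu+1}j$. In fact you are more careful than the paper's own write-up, since you explicitly apply the lemma to the tuple $(a,b_1^j,\ldots,b_\nu^j)$ with $m_{b_h^j}=j\,m_{b_h}$ and check that strong multiplicative independence is inherited, whereas the paper quotes the lemma for $(a,b_1,\ldots,b_\nu)$ and absorbs the gcd identity $(j\,m_{b_h},ij)=j\,(m_{b_h},i)$ without comment.
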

\begin{proof}
$a, b_1, \ldots, b_\nu$ is strongly multiplicatively independent, and so by Lemma \ref{strongmulindequiv} $\overline{W}_{i, j}\cong \mathbb{Z}/\frac{k}{(m_{a}, l)}\oplus\mathbb{Z}/\frac{k}{(m_{b_1}, l)}\oplus\ldots\oplus\mathbb{Z}/\frac{l}{(m_{b_\nu}, l)}$. It follows that $\#\overline{W}_{i,j}$ is $ij/(m_a, ij)\cdot\prod_{h=1}^\nu i/(m_{b_h}, ij)$, which by the identity $\#\overline{W}_{i, j}\cdot t_{i, j}=i^{\nu+1}j$ means that $t_{i, j}=(ij, m_a)\cdot\prod_{h=1}^\nu (i, m_{b_h})$, as desired. 
\end{proof}
\begin{corollary}\label{boundondeg}
$$
\frac{i^{\nu+1}j\varphi(ij)}{[F_{i,j}:\mathbb{Q}]} \mid 2^{\nu+1}m_a\cdot \prod_{h=1}^\nu m_{b_h}
$$
\end{corollary}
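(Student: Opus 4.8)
The plan is to read off the bound directly from the factorization $f_{i,j}=t_{i,j}\cdot\#\ker\psi_{i,j}$ supplied by Lemma \ref{fijlem}, together with the explicit value of $t_{i,j}$ in Lemma \ref{tijlem} and the $2$-torsion bound in Lemma \ref{keranh}. Since $f_{i,j}$ is by definition the quantity $i^{\nu+1}j\varphi(ij)/[F_{i,j}:\mathbb{Q}]$ appearing on the left of the corollary, it is enough to prove the two divisibilities $t_{i,j}\mid m_a\prod_{h=1}^\nu m_{b_h}$ and $\#\ker\psi_{i,j}\mid 2^{\nu+1}$; multiplying them then gives $f_{i,j}\mid 2^{\nu+1}m_a\prod_{h=1}^\nu m_{b_h}$.

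The first divisibility is immediate from Lemma \ref{tijlem}: since $t_{i,j}=(ij,m_a)\prod_{h=1}^\nu(i,m_{b_h})$ and each factor $(ij,m_a)$ divides $m_a$ while each $(i,m_{b_h})$ divides $m_{b_h}$, the product $t_{i,j}$ divides $m_a\prod_{h=1}^\nu m_{b_h}$. For the second, I would observe that $\psi_{i,j}\colon\overline{W}_{i,j}\to\mathbb{Q}(\zeta_k)^*/\mathbb{Q}(\zeta_k)^{*k}$ is the restriction to $\overline{W}_{i,j}$ of the canonical map $\mathbb{Q}^*/\mathbb{Q}^{*k}\to\mathbb{Q}(\zeta_k)^*/\mathbb{Q}(\zeta_k)^{*k}$, so Lemma \ref{keranh} forces every element of $\ker\psi_{i,j}$ to have order at most $2$; that is, $\ker\psi_{i,j}\subseteq\overline{W}_{i,j}[2]$. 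Because $W_{i,j}=\langle a,b_1^j,\dots,b_\nu^j\rangle$ is a quotient of $\mathbb{Z}^{\nu+1}$, its image $\overline{W}_{i,j}$ in $\mathbb{Q}^*/\mathbb{Q}^{*k}$ is a finite abelian group generated by $\nu+1$ elements, hence a direct sum of at most $\nu+1$ cyclic groups; therefore $\#\overline{W}_{i,j}[2]$ divides $2^{\nu+1}$, and in particular $\#\ker\psi_{i,j}\mid 2^{\nu+1}$.

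The only point requiring any care — the ``hard'' part, though it is quite soft — is this last step: one must remember that although $\overline{W}_{i,j}$ sits inside the very large torsion group $\mathbb{Q}^*/\mathbb{Q}^{*k}$, it is itself the image of a rank-$(\nu+1)$ free abelian group and hence generated by $\nu+1$ elements, which is precisely what caps its $2$-torsion by $2^{\nu+1}$. Everything else is a direct bookkeeping consequence of the lemmas already established in this subsection, so no further input is needed.
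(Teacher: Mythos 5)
Your proposal is correct and follows essentially the same route as the paper: combine $f_{i,j}=t_{i,j}\cdot\#\ker\psi_{i,j}$ from Lemma \ref{fijlem} with $t_{i,j}\mid m_a\prod_{h=1}^\nu m_{b_h}$ from Lemma \ref{tijlem} and the bound $\#\ker\psi_{i,j}\mid 2^{\nu+1}$ coming from Lemma \ref{keranh} plus the fact that $\overline{W}_{i,j}$ is generated by $\nu+1$ elements. Your write-up just spells out the $2$-torsion bookkeeping slightly more explicitly than the paper does; no difference in substance.
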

\begin{proof}
By Lemma \ref{keranh}, we see that $\ker\psi_{i,j}$ is annihilated by 2. As $\overline{W}_{i,j}$ is generated by $\nu+1$ elements, $\#\ker\psi_{i, j}\mid 2^{\nu+1}$. On the other hand by Lemma \ref{tijlem} we see that 
$$
t_{i, j}\mid m_a\cdot\prod_{h=1}^\nu m_{b_h}
$$
And so by Lemma \ref{fijlem} it follows that
$$
\frac{i^{\nu+1}j\varphi(ij)}{[F_{i,j}:\mathbb{Q}]} \mid 2^{\nu+1}m_a\cdot \prod_{h=1}^\nu m_{b_h}
$$
\end{proof}
While the proof of Corollary \ref{boundondeg} gives the upper bound $2^{\nu+1}$ for the size of $\ker\psi_{i, j}$, in order to control its value we define the following number depending on $x=a^{n_0} \prod b_h^{n_h}$ (see definition of $c_{b_h}$ in Section \ref{notationsection}): 
$$
C_x = \max_{h\in [1, \nu]}\{(c_{b_h}+1)n_h\}.
$$
As such $a, b_1, \ldots, b_\nu$ are multiplicatively independent these $n_h$'s are unique and so $C_x$ is well defined. Using this we prove the following lemma:
\begin{lemma}
    All elements in $\ker\psi_{i, j}$ can be written in the form $x^{ij/2} \textup{ mod } \mathbb{Q}^{*ij}$ where $x\in V$. Then, for $x\in \langle \widehat{b_1}, \ldots, \widehat{b_\nu}\rangle/\langle \widehat{b_1}, \ldots, \widehat{b_\nu}\rangle^2, x^{ij/2}\in\ker\psi_{i,j}$ if and only if $2^{C_x}\mid i$ and $\Delta(\widehat{x})\mid ij$. Finally, for $x\in \widehat{a}\langle \widehat{b_1}, \ldots, \widehat{b_\nu}\rangle/\langle \widehat{b_1}, \ldots, \widehat{b_\nu}\rangle^2, x^{ij/2}\in\ker\psi_{i,j}$ if and only if $2^{C_x}\mid i$ and $2^{c_a+1}, \Delta(\widehat{x})\mid ij$.
\end{lemma}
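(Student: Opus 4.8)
The plan is to reduce the statement to an analysis of the $2$-torsion of $\overline{W}_{i,j}$ together with the description of the quadratic subfields of cyclotomic fields. Write $k=ij$. If $k$ is odd then $\overline{W}_{i,j}$ has no $2$-torsion, so by Lemma \ref{keranh} $\ker\psi_{i,j}$ is trivial and every assertion is vacuous; assume henceforth $k$ even. By Lemma \ref{keranh} applied with this $k$, $\ker\psi_{i,j}\subseteq\overline{W}_{i,j}[2]$. By Lemma \ref{strongmulindequiv} we have $\overline{W}_{i,j}\cong\mathbb{Z}/\tfrac{ij}{(m_a,ij)}\oplus\bigoplus_h\mathbb{Z}/\tfrac{i}{(m_{b_h},i)}$ with generators $a,b_1^j,\dots,b_\nu^j$, so $\overline{W}_{i,j}[2]$ is generated by the order-$2$ elements of those cyclic summands whose order is even. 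A short computation with $\textup{ord}_2$ shows that $\tfrac{ij}{(m_a,ij)}$ is even exactly when $2^{c_a+1}\mid ij$, in which case $m_a/(m_a,ij)$ is odd and the order-$2$ element $a^{ij/(2(m_a,ij))}$ equals $\widehat a^{\,ij/2}$ in $\mathbb{Q}^*/\mathbb{Q}^{*ij}$ (using $\widehat a\in\mathbb{Q}$); likewise $\tfrac{i}{(m_{b_h},i)}$ is even exactly when $2^{c_{b_h}+1}\mid i$, with order-$2$ element $\widehat{b_h}^{\,ij/2}$. Hence every element of $\overline{W}_{i,j}[2]$, a fortiori of $\ker\psi_{i,j}$, is a product of a subset of $\{\widehat a^{\,ij/2}\}\cup\{\widehat{b_h}^{\,ij/2}\}_h$, i.e.\ is $x^{ij/2}$ for some $x\in V$; this is the first assertion.

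For the other two assertions I would separate membership in $\overline{W}_{i,j}$ from membership in the kernel. Fix $x=\widehat a^{\,n_0}\prod_h\widehat{b_h}^{\,n_h}$ with $n_0,n_h\in\{0,1\}$ (case (b) being $n_0=0$, case (c) being $n_0=1$); the exponents appearing in $x^{ij/2}=\widehat a^{\,n_0ij/2}\prod_h\widehat{b_h}^{\,n_hij/2}$ are rigid since $\widehat a,\widehat{b_1},\dots,\widehat{b_\nu}$ are multiplicatively independent. As strong multiplicative independence makes $\langle\widehat a,\widehat{b_1},\dots,\widehat{b_\nu}\rangle$ radically closed in $\mathbb{Q}^*_{>0}$ (i.e.\ $r^n$ lying in it forces $r$ to lie in it; cf.\ Lemma \ref{strongmulindequiv}), any witness $x^{ij/2}=a^{s_0}\prod_h b_h^{jt_h}r^{ij}$ to $x^{ij/2}\in\overline{W}_{i,j}$ has $r\in\langle\widehat a,\widehat{b_h}\rangle$, and comparing exponents shows $x^{ij/2}\in\overline{W}_{i,j}$ iff the congruences $n_0ij/2\equiv m_a s_0$ and $n_hij/2\equiv jm_{b_h}t_h$ $(\textup{mod }ij)$ are solvable. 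The $\textup{ord}_2$-analysis of these gives: the $h$-th congruence is solvable iff $n_h=0$ or $2^{c_{b_h}+1}\mid i$, so all of them together iff $2^{C_x}\mid i$; and the $a$-congruence is solvable iff $n_0=0$ or $2^{c_a+1}\mid ij$. This accounts for the condition $2^{C_x}\mid i$ in (b) and for the extra condition $2^{c_a+1}\mid ij$ in (c).

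Finally, granting $x^{ij/2}\in\overline{W}_{i,j}$, I would show $x^{ij/2}\in\ker\psi_{i,j}$ iff $\Delta(\widehat x)\mid ij$. By definition $x^{ij/2}\in\ker\psi_{i,j}$ means $x^{ij/2}\in\mathbb{Q}(\zeta_{ij})^{*ij}$; if $x^{ij/2}=y^{ij}$ with $y\in\mathbb{Q}(\zeta_{ij})$ then $(\sqrt x/y)^{ij}=1$, so $\sqrt x\in\mathbb{Q}(\zeta_{ij})$, and conversely $\sqrt x\in\mathbb{Q}(\zeta_{ij})$ gives $x^{ij/2}=(\sqrt x)^{ij}\in\mathbb{Q}(\zeta_{ij})^{*ij}$. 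A nontrivial $x\in V$ is not a perfect power in $\mathbb{Q}^*$ (radical closedness again), so $m_x=1$, $\widehat x=x$, and $\mathbb{Q}(\sqrt x)=\mathbb{Q}(\sqrt{\widehat x})$; by the classification of quadratic subfields of cyclotomic fields---equivalently, by Proposition \ref{wagstaffprop} applied to $\widehat x$---we get $\sqrt{\widehat x}\in\mathbb{Q}(\zeta_{ij})$ iff $\Delta(\widehat x)\mid ij$. Combining with the previous step yields the equivalences in (b) and (c); the degenerate class $x=1$, for which $x^{ij/2}=1\in\ker\psi_{i,j}$, $\Delta=1$ and $C_x=0$, is immediate.

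The step I expect to be the main obstacle is the $2$-adic bookkeeping in the first two paragraphs: one has to check that the evenness of the two kinds of cyclic summand, the identification $a^{ij/(2(m_a,ij))}\equiv\widehat a^{\,ij/2}$, and the solvability of the exponent congruences all turn on precisely $2^{c_a+1}\mid ij$ and $2^{c_{b_h}+1}\mid i$, so that the combined membership condition is exactly $2^{C_x}\mid i$ (together with $2^{c_a+1}\mid ij$ in case (c)) and nothing coarser. Granting that, the field-theoretic input---Kummer theory, the root-of-unity argument, and the classification of quadratic subfields---is routine.
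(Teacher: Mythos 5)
Your proposal is correct and takes essentially the same route as the paper's proof: it reduces to the $2$-torsion of $\overline{W}_{i,j}$ via Lemma \ref{keranh}, characterizes when $x^{ij/2}$ lies in $\overline{W}_{i,j}$ by the $2$-adic conditions $2^{C_x}\mid i$ (and $2^{c_a+1}\mid ij$ when $\widehat{a}$ occurs) using strong multiplicative independence, and converts triviality in $\mathbb{Q}(\zeta_{ij})^*/\mathbb{Q}(\zeta_{ij})^{*ij}$ into $\Delta(\widehat{x})\mid ij$ via Proposition \ref{wagstaffprop}. Your congruence-solvability bookkeeping is only a more systematic phrasing of the paper's argument about orders of the cyclic summands, not a different method.
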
\label{discrem}
\begin{proof}
\par Since $\ker\psi_{i,j}$ is annihilated by 2, it follows that all elements of the kernel are of the form $x^{ij/2} \textup{ mod } \mathbb{Q}^{*ij}$ where $x\in \mathbb{Q}^*$ and $x^{ij/2}$ is generated by $a, b_1^j, \ldots, b_\nu^j$. We see immediately that $2\nmid m_x$ as otherwise it will get annihilated by 2. Furthermore, since $a, b_1, \ldots, b_\nu$ are strongly multiplicatively independent, we see that for $a^{l_0}\prod b_h^{jl_h}=x^{ij/2}$ that $x\in \langle \widehat{a}, \widehat{b_1}, \ldots, \widehat{b_\nu}\rangle$. And finally, since we considering $x^{ij/2}$ modulo $ij$-th powers, we may say without loss of generality that $x\in \langle \widehat{a}, \widehat{b_1}, \ldots, \widehat{b_\nu}\rangle/\langle \widehat{a}, \widehat{b_1}, \ldots, \widehat{b_\nu}\rangle^2=V$. Since $a, b_1, \ldots, b_\nu$ is strongly mulitplicatively independent, so is $\widehat{a}, \widehat{b_1}, \ldots, \widehat{b_\nu}$. So it follows by that for $x=\widehat{a}^{l_0}\prod \widehat{b_h}^{l_h}$ that $m_{\widehat{a}}l_0, m_{\widehat{b_h}}l_h=0\textup{ mod } m_x$ and since $m_{\widehat{a}}l_0, m_{\widehat{b_h}}l_h= 0, 1$, this means that $m_x=1$. And so $\widehat{x}=x$. 
\par Now, for any given element $x\in V, \widehat{x}^{ij/2}\in\ker\psi_{i,j}$ is equivalent to the conditions that $\overline{x}^{ij/2}\in \overline{W}_{i,j}$ and that $[\mathbb{Q}(\zeta_{ij}, \widehat{x}^{1/2}):\mathbb{Q}]=\varphi(ij)$. We shall first show that $\overline{a}^{ij/2}\in \overline{W}_{i,j}$ if and only if $2^{c_a+1}\mid ij$ and that likewise $\overline{x}^{ij/2}\in \overline{W}_{i,j}$ if and only if $2^{C_x}\mid i$ for $x\in \langle b_1, \ldots, b_\nu\rangle/\langle b_1, \ldots, b_\nu\rangle^2$. Finally, for all remaining cases of $x$, we shall show that $\overline{x}^{ij/2}\in \overline{W}_{i,j}$ if and only if $2^{C_x}\mid i$ and $2^{c_a+1}\mid ij$. Having shown this, we show that $[\mathbb{Q}(\zeta_{ij}, \widehat{x}^{1/2}):\mathbb{Q}]=\varphi(ij)$ if and only if $\Delta(\widehat{x})\mid ij$. 
\par Denote $m_x'=m_x/2^{c_x}$, noting $(m_x', 2)=1$. Let us assume $2^{c_a+1}\mid ij$. It follows that $a^{ij/2^{c_a+1}} \textup{ mod } \mathbb{Q}^{*ij} = \widehat{a}^{m_a' ij/2} \textup{ mod } \mathbb{Q}^{*ij} = \widehat{a}^{ij/2} \textup{ mod }\mathbb{Q}^{*ij}$ and so $\widehat{a}^{ij/2}\in\overline{W}_{i,j}$. On the other hand let us assume $2^{c_a+1}\nmid ij$, and let $r_x = m_x \textup{ mod } ij$. As a result of the assumption $2^{c_a+1}\nmid ij$ we see that $c_a\geq \textup{ord}_2(ij)$ and so $\textup{ord}_2(r_a)\geq \textup{ord}_2(ij)$. This means that the order of cyclic group in $\mathbb{Q}^*/\mathbb{Q}^{*ij}$ generated by $a, \#\langle a \mod \mathbb{Q}^{*ij}\rangle = \#\langle \widehat{a}^{r_x}\mod \mathbb{Q}^{*ij}\rangle$ is not divisible by 2 and so $\widehat{a}^{ij/2}\not\in\overline{W}_{i,j}$. We again see that if $2^{c_{b_h}+1}\mid i$ then $\widehat{b_h}^{ij/2}\in\overline{W}_{i,j}$ and if $2^{c_{b_h}+1}\nmid i$ then $\#\langle b_h^{j}\textup{ mod }\mathbb{Q}^{*ij}\rangle=\#\langle \widehat{b_h}^{r_{b_h}j}\textup{ mod }\mathbb{Q}^{*ij}\rangle$ is not divisible by 2, and so $\widehat{b_h}^{ij/2}\not\in\overline{W}_{i,j}$. 
\par Now let $x=\prod \widehat{b_h}^{n_h}\in \langle \widehat{b_1}, \ldots, \widehat{b_\nu}\rangle/\langle \widehat{b_1}, \ldots, \widehat{b_\nu}\rangle^2, n_h=0,1$. If $2^{C_x}\mid i$, we note the following:
\begin{align*}
    \prod_{n_h= 1}(b_h^{j})^{i/2^{b_h+1}} &= \prod_{n_h=1}\widehat{b_h}^{m'_{b_h}ij/2}\\
    &= \prod_{n_h=1}\widehat{b_h}^{ij/2}
\end{align*}
and so $x^{ij/2}\in\overline{W}_{i,j}$. On the other hand, let us say that $x^{ij/2}=\prod b_h^{jl_h}$. By strong multplicative independence this means that $jl_h = \lambda ij $ or $\lambda ij + ij/2$ for some $\lambda\in \mathbb{Z}$. This means that for all $h$ with the condition $jl_h\neq 0 \textup{ mod } ij$ that $2\mid \langle b_h^j \textup{ mod } \mathbb{Q}^{*ij}\rangle$, and so $2\mid \langle b_h \textup{ mod } \mathbb{Q}^{*i}\rangle$. It follows that for such $h$ that $2^{c_{b_h}}\mid i$. And so for $x\in \langle \widehat{b_1}, \ldots, \widehat{b_\nu}\rangle/\langle \widehat{b_1}, \ldots, \widehat{b_\nu}\rangle^2,$ if $ \overline{x}^{ij/2}\in\overline{W}_{i,j}$ then $2^{C_x+1}\mid i$. As a result, we get that $x\in \langle \widehat{b_1}, \ldots, \widehat{b_\nu}\rangle/\langle \widehat{b_1}, \ldots, \widehat{b_\nu}\rangle^2, \overline{x}^{ij/2}\in\overline{W}_{i,j}$ if and only if $2^{C_x+1}\mid i$. We also see that for $x\in \widehat{a}\langle \widehat{b_1}, \ldots, \widehat{b_\nu}\rangle/\langle \widehat{b_1}, \ldots, \widehat{b_\nu}\rangle^2$, applying the same steps as above gives us that $\overline{x}^{ij/2}$ if and only if $2^{C_x}\mid i$ and $2^{c_a+1}\mid ij$. 
\par Finally, by Proposition \ref{wagstaffprop}, we see that for any $x\in V, [\mathbb{Q}(\zeta_{ij}, \widehat{x}^{1/2}):\mathbb{Q}]=\varphi(ij)$ is equivalent to the condition that $\Delta(\widehat{x})\mid ij$, proving our lemma.
\end{proof}

\subsection{Generalised Double Sums}
Let us define $Y_{m, n}^{(\nu)}$ such that
\begin{equation}\label{mnnueq}
    Y_{m , n}^{(\nu)} = \sum_{\substack{i=1\\ m\mid i}}^\infty\sum_{\substack{j=1\\ mn\mid ij}}^\infty\frac{\mu(j)}{i^{\nu+1}j\varphi(ij)},
\end{equation}
and define $S_{m, n}^{(\nu)}$ such that
\begin{equation}\label{Smn}
    S_{m, n}^{(\nu)} = \sum_{\substack{i=1\\ m\mid i}}^\infty\sum_{\substack{j=1\\ n\mid ij}}^\infty\frac{\mu(j)}{i^{\nu+1}j\varphi(ij)} = Y_{m,[m,n]/m}^{(\nu)}
\end{equation}
In order to evaluate $\delta(a, b_1, \ldots, b_{\nu})$, we need to determine the value of this series, for which we can proceed by imitating in \cite[Thm. 4.2]{https://doi.org/10.48550/arxiv.math/9912250}. However, due the nature of these higher powers, we need to define a new function: 
\begin{lemma}
Define $\varphi_n: \mathbb{N}\rightarrow\mathbb{N}$ such that
$$
\varphi_n(x) = x^n\sum_{d\mid x}\frac{\mu(d)}{d^n}.
$$
Then $\varphi_n$ is a multiplicative function.
\end{lemma}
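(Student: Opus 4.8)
The plan is to reduce the claim to the two standard facts that a Dirichlet convolution of multiplicative functions is multiplicative and that a pointwise product of multiplicative functions is multiplicative; I will also indicate a self-contained alternative by direct manipulation of the divisor sum.

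First I would write $\varphi_n(x) = x^n\cdot h_n(x)$, where $h_n(x)=\sum_{d\mid x}\mu(d)/d^n$. Since $x\mapsto x^n$ is completely multiplicative, it suffices to prove $h_n$ is multiplicative. To that end, note that $g_n(d):=\mu(d)/d^n$ is the pointwise product of the multiplicative function $\mu$ with the completely multiplicative function $d\mapsto d^{-n}$, hence $g_n$ is multiplicative (as a $\mathbb{Q}$-valued arithmetic function). Then $h_n = g_n * \mathbf{1}$, the Dirichlet convolution of $g_n$ with the constant function $1$, and a convolution of multiplicative functions is multiplicative; therefore $h_n$, and with it $\varphi_n$, is multiplicative. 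Since $\varphi_n(1)=1\neq 0$, this is a genuine multiplicative function.

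If instead a self-contained proof is preferred, I would fix coprime $a,b\in\mathbb{N}$ and use the bijection $d\leftrightarrow(d_1,d_2)$ between divisors $d$ of $ab$ and pairs with $d_1\mid a$, $d_2\mid b$, under which $d=d_1d_2$ with $\gcd(d_1,d_2)=1$, so that $\mu(d)=\mu(d_1)\mu(d_2)$. Then $\varphi_n(ab)=(ab)^n\sum_{d_1\mid a}\sum_{d_2\mid b}\mu(d_1)\mu(d_2)/(d_1d_2)^n$ factors as $\bigl(a^n\sum_{d_1\mid a}\mu(d_1)/d_1^n\bigr)\bigl(b^n\sum_{d_2\mid b}\mu(d_2)/d_2^n\bigr)=\varphi_n(a)\varphi_n(b)$.

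There is no substantive obstacle; the only point meriting a word of care in the direct route is the coprimality of $d_1$ and $d_2$, which is exactly what makes $\mu$ factor across the product and lets the double sum split. For later use I would also record the value on prime powers, $\varphi_n(p^k)=p^{nk}(1-p^{-n})=p^{n(k-1)}(p^n-1)$, which together with multiplicativity shows $\varphi_n$ is indeed $\mathbb{N}$-valued and reduces to Euler's totient when $n=1$, explaining the notation.
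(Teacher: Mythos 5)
Your argument is correct, and both of your routes work; the paper's proof takes a slightly different tack. Rather than appealing to the general facts that a Dirichlet convolution of multiplicative functions is multiplicative and that multiplying by the completely multiplicative $x\mapsto x^n$ preserves multiplicativity, the paper simply evaluates the divisor sum in closed form via the Euler product, $\sum_{d\mid x}\mu(d)/d^n=\prod_{p\mid x}\bigl(1-\tfrac{1}{p^n}\bigr)$, so that $\varphi_n(x)=x^n\prod_{p\mid x}\bigl(1-\tfrac{1}{p^n}\bigr)$, and multiplicativity is read off directly since coprime integers have disjoint prime divisors. The two arguments are essentially equivalent in substance (your direct divisor-splitting proof for coprime $a,b$ is exactly what underlies the Euler product), but the paper's version has the side benefit of producing the explicit product formula, which is what actually gets used later in the evaluation of the double sums, where $\varphi_\nu(\widetilde{n})$ enters through its value on squarefree arguments; your recorded prime-power values $\varphi_n(p^k)=p^{n(k-1)}(p^n-1)$ recover the same information, and your convolution argument is the more structural and arguably cleaner way to establish the bare statement of the lemma.
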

\begin{proof}
By the Euler product formula, we note that
$$
\sum_{d\mid x}\frac{\mu(d)}{d^n} = \prod_{p\mid x}\Big(1-\frac{1}{p^n}\Big),
$$
which means that 
$$
\varphi_n(x) = x^n\prod_{p\mid x}\Big(1-\frac{1}{p^n}\Big).
$$
It follows that $\varphi_n$ is multiplicative.
\end{proof}

\begin{remark}
    On account of the close association with this family of functions with the totient function, a natural question to ask would be if the function $\varphi_n(x)$ has any arithmetic meaning. One meaning is that $\varphi_n(x)$ measures the size of the set $\{k\leq x^n\mid \textup{there exist no primes }p \textup{ such that } p^n \mid (k, x^n)\}$. Likewise, as the totient function, for $n>1$, we can see that the function $\varphi_n(x)$ has average order $\frac{1}{(n+1)\zeta(n+1)}$ using a similar method as detailed in \cite[Ch. 3.7]{apostol}:
\end{remark}
\begin{proof}
    \begin{align*}
        \sum_{k\leq x} \varphi_n(k) &= \sum_{k\leq x}\sum_{d\mid k}\mu(d)\frac{k^n}{d^n}\\
        &= \sum_{q,d, qd\leq x}\mu(d)q^n\\
        &= \sum_{d\leq x}\mu(d)\sum_{q\leq x/d} q^n\\
        \intertext{We use a standard fact about sums that $|\sum_{q\leq w} q^n-\frac{w^{n+1}}{n+1}|\leq L_nw^n$, where $L_n>0$ depends on $n$, to state that}
        &= \sum_{d\leq x}\mu(d)\left(\frac{x^{n+1}}{n+1}\frac{1}{d^{n+1}} + O\left(\frac{x^n}{d^n}\right)\right)\\
        &= \frac{x^{n+1}}{n+1}\sum_{d\leq x}\frac{\mu(d)}{d^{n+1}} + O\left(x^n\sum_{d\leq x}\frac{1}{d^n}\right)\\
        \intertext{Since $O\left(\sum_{d\leq x}\frac{1}{d^n}\right)=O(1)$}
        &= \frac{x^{n+1}}{n+1}\left(\frac{1}{\zeta(n+1)}-\sum_{d > x}\frac{\mu(d)}{d^{n+1}}\right) + O\left(x^n\right)\\
        &= \frac{x^{n+1}}{n+1}\left(\frac{1}{\zeta(n+1)}+O(\frac{1}{x^n})\right) + O\left(x^n\right)\\
        &= \frac{x^{n+1}}{n+1}\frac{1}{\zeta(n+1)} + O\left(x^n\right)
    \end{align*}
    Thus $\varphi_n(x)$ has average order $\frac{1}{(n+1)\zeta(n+1)}$
\end{proof}
With this in hand, we may state the following:
\begin{theorem}\label{Ymnval}
The series (\ref{mnnueq}) is equal to
$$
Y_{m,n}^{(\nu)} = \frac{S^{(\nu)}}{m^{\nu+2}n^{\nu+2}}\prod_{p\mid n}\frac{-p^{\nu+3}(p^\nu-1)}{p^{\nu+3}-p^{\nu+2}-p^{\nu+1}+1}\prod_{\substack{p\mid m\\ p\nmid n}}\frac{p^{\nu+1}(p^2-1)}{p^{\nu+3}-p^{\nu+2}-p^{\nu+1}+1},
$$
where 
$$
S^{(\nu)} = \prod_p\Big(1-\frac{p^\nu-1}{p-1}\Big(\frac{p}{p^{\nu+2}-1}\Big)\Big).
$$
\end{theorem}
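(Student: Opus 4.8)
The plan is to evaluate $Y_{m,n}^{(\nu)}$ as an Euler product. Since $\mu(j)=0$ unless $j$ is squarefree, I restrict to squarefree $j$ and write the summation variables prime by prime as $i=\prod_p p^{\alpha_p}$ and $j=\prod_p p^{\beta_p}$ with $\beta_p\in\{0,1\}$; write $v_p$ for the $p$-adic valuation. Then $i^{\nu+1}j\,\varphi(ij)=\prod_p p^{(\nu+1)\alpha_p+\beta_p}\varphi(p^{\alpha_p+\beta_p})$ and $\mu(j)=\prod_p(-1)^{\beta_p}$, while the constraints $m\mid i$ and $mn\mid ij$ become $\alpha_p\ge v_p(m)$ and $\alpha_p+\beta_p\ge v_p(m)+v_p(n)$ for every prime $p$. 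The double series converges absolutely---using $\varphi(ij)\ge\varphi(i)\varphi(j)$ it is dominated by $\big(\sum_i\frac{1}{i^{\nu+1}\varphi(i)}\big)\big(\sum_j\frac{1}{j\varphi(j)}\big)<\infty$---so it may be rearranged into the Euler product
\[
Y_{m,n}^{(\nu)}=\prod_p E_p,\qquad E_p=\sum_{\substack{\alpha\ge v_p(m),\ \beta\in\{0,1\}\\ \alpha+\beta\ge v_p(m)+v_p(n)}}\frac{(-1)^\beta}{p^{(\nu+1)\alpha+\beta}\,\varphi(p^{\alpha+\beta})},
\]
with the conventions $\varphi(p^0)=1$ and $\varphi(p^\gamma)=p^{\gamma-1}(p-1)$ for $\gamma\ge1$.

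I would then compute $E_p$ by splitting the inner sum into its $\beta=0$ and $\beta=1$ halves, each a geometric series in $p^{-(\nu+2)}$, in the three cases $p\nmid mn$, $p\mid n$, and $p\mid m$ with $p\nmid n$. When $p\nmid mn$ the sum is unconstrained (with the single exceptional term $\alpha=\beta=0$ contributing $1$ through $\varphi(p^0)=1$), and summing the series gives exactly $E_p=1-\frac{p^\nu-1}{p-1}\cdot\frac{p}{p^{\nu+2}-1}$, the $p$-th Euler factor of $S^{(\nu)}$. Writing $a=v_p(m)$, $b=v_p(n)$: when $p\mid n$ (so $b\ge1$) the $\beta=0$ half runs over $\alpha\ge a+b$ and the $\beta=1$ half over $\alpha\ge a+b-1$, and summing and simplifying yields $E_p=\frac{-p^{\nu+3}(p^\nu-1)}{(p-1)(p^{\nu+2}-1)}\,p^{-(\nu+2)(a+b)}$; when $p\mid m$ and $p\nmid n$ (so $a\ge1$, $b=0$) both halves run over $\alpha\ge a$ and combine to $E_p=\frac{p^{\nu+1}(p^2-1)}{(p-1)(p^{\nu+2}-1)}\,p^{-(\nu+2)a}$. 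It is convenient to note the identity $(p-1)(p^{\nu+2}-1)-p(p^\nu-1)=p^{\nu+3}-p^{\nu+2}-p^{\nu+1}+1$, so that the Euler factor of $S^{(\nu)}$ is $\frac{p^{\nu+3}-p^{\nu+2}-p^{\nu+1}+1}{(p-1)(p^{\nu+2}-1)}$; dividing the two displayed expressions by it shows that $E_p$ is the Euler factor of $S^{(\nu)}$ times the correction $\frac{-p^{\nu+3}(p^\nu-1)}{p^{\nu+3}-p^{\nu+2}-p^{\nu+1}+1}\,p^{-(\nu+2)(v_p(m)+v_p(n))}$ when $p\mid n$, times $\frac{p^{\nu+1}(p^2-1)}{p^{\nu+3}-p^{\nu+2}-p^{\nu+1}+1}\,p^{-(\nu+2)v_p(m)}$ when $p\mid m$, $p\nmid n$, and times $1$ otherwise.

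Finally I would assemble the product: $\prod_p E_p$ equals $\prod_p(\text{Euler factor of }S^{(\nu)})=S^{(\nu)}$ times the product of the corrections. The pure prime powers appearing there, collected over all $p$, give $\prod_p p^{-(\nu+2)v_p(m)}\cdot\prod_p p^{-(\nu+2)v_p(n)}=\frac{1}{m^{\nu+2}n^{\nu+2}}$ (all but finitely many factors equal $1$), and the rest is $\prod_{p\mid n}\frac{-p^{\nu+3}(p^\nu-1)}{p^{\nu+3}-p^{\nu+2}-p^{\nu+1}+1}\prod_{p\mid m,\ p\nmid n}\frac{p^{\nu+1}(p^2-1)}{p^{\nu+3}-p^{\nu+2}-p^{\nu+1}+1}$, which is the asserted formula. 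I expect the only points needing care to be the justification of the Euler-product rearrangement (absolute convergence) and the correct handling of the boundary term $\varphi(p^0)=1$ in the case $p\nmid mn$; the remainder is bookkeeping with geometric series.
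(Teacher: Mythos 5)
Your proposal is correct, but it follows a genuinely different route from the paper's proof. You factor the constrained double sum directly into an Euler product, computing for each prime a local sum over the exponent pair $(\alpha,\beta)=(v_p(i),v_p(j))$ with $\beta\in\{0,1\}$; your three local evaluations (for $p\nmid mn$, for $p\mid n$, and for $p\mid m$, $p\nmid n$) are exactly right, and the identity $(p-1)(p^{\nu+2}-1)-p(p^\nu-1)=p^{\nu+3}-p^{\nu+2}-p^{\nu+1}+1$ together with collecting the powers $p^{-(\nu+2)(v_p(m)+v_p(n))}$ into $1/(mn)^{\nu+2}$ reproduces the stated formula. The paper instead imitates the Moree--Stevenhagen computation: it substitutes $i\mapsto mi$ and $ij\mapsto nd$ to collapse the double sum into a single series $\sum_d f(d)$, evaluates the inner sum over $j\mid nd$ via the identity $\sum_{j\mid x}j^\nu\mu(j)=\mu(\widetilde{x})\varphi_\nu(\widetilde{x})$ using the auxiliary multiplicative function $\varphi_\nu$, and only then expands $\sum_d f(d)$ as an Euler product, with the same three-case analysis of $\sum_k f(p^k)$ that you perform locally. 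What your version buys is the elimination of the change of variables and the $\varphi_\nu$ lemma, at the price of justifying the rearrangement of a constrained bivariate series into a product of bivariate local sums --- which you correctly reduce to absolute convergence via $\varphi(ij)\ge\varphi(i)\varphi(j)$; what the paper's version buys is that convergence is needed only for a single series with $|f(d)|\le d^{-2}$, at the price of the substitutions and the $\varphi_\nu$ bookkeeping. The resulting local factors and the final assembly into $S^{(\nu)}/(m^{\nu+2}n^{\nu+2})$ times the two finite products are identical in both arguments.
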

\begin{proof}
Substituting $mi$ for $i$ 
\begin{align*}
    Y_{m,n}^{(\nu)} &= \frac{1}{m^{\nu+1}}\sum_{\substack{i=1\\ m\mid i}}^\infty\sum_{\substack{j=1\\ mn\mid ij}}^\infty\frac{\mu(j)}{i^{\nu+1}j\varphi(mij)}= \frac{1}{m^{\nu+1}}\sum_{\substack{i=1\\ m\mid i}}^\infty\sum_{\substack{j=1\\ mn\mid ij}}^\infty\frac{\mu(j)}{(ij)^{\nu+1}\varphi(mij)}j^{\nu}\\
    \intertext{and $nd$ for $ij$}
    &= \frac{1}{m^{\nu+1}n^{\nu+1}}\sum_{d=1}^\infty\frac{1}{d^{\nu+1}\varphi(mnd)}\sum_{j\mid nd}j^\nu\mu(j).
\end{align*}
Denoting $\widetilde{x}=\prod_{p\mid x}p$, we see that
$$
\sum_{j\mid x}j^\nu\mu(j)=\sum_{j\mid \widetilde{x}}j^\nu\mu(j)=\mu(\widetilde{x})\sum_{j\mid \widetilde{x}}j^\nu\mu(\widetilde{x}/j)=\mu(\widetilde{x})\varphi_\nu(\widetilde{x})
$$
This gives us that
$$
Y_{m,n}^{(\nu)} = \frac{\mu(\widetilde{n})\varphi_\nu(\widetilde{n})}{m^{\nu+1}n^{\nu+1}\varphi(mn)}\sum_{d=1}^\infty f(d)
$$
Where
$$
f(d)=\frac{\varphi(mn)\mu(\widetilde{nd})\varphi_\nu(\widetilde{nd})}{d^{\nu+1}\varphi(mnd)\mu(\widetilde{n})\varphi_\nu(\widetilde{n})} = \frac{\mu(\widetilde{nd})}{d^{\nu+1}\mu(\widetilde{n})}\prod_{\substack{p\mid d\\ p\mid mn}}p^{-\textup{ord}_p(d)}\prod_{\substack{p\mid d\\ p\nmid mn}}\left(p^{\textup{ord}_p(d)}-p^{\textup{ord}_p(d)-1}\right)^{-1}\prod_{\substack{p\mid d\\ p\nmid n}}\left(p^\nu-1\right)
$$
is a multiplicative function, as the functions which constitute it are multiplicative, with $\sum_{d=1}^\infty f(d)$ converging absolutely as $|f(d)|\leq \frac{1}{d^{2}}$. This means that we may consider the Euler product expansion of this sum. Now, we see by direct substitution that
$$
f(p^k) = 
\begin{cases}
    -\frac{p-1}{p^\nu-1}p^{1-k(\nu+2)} & p\nmid mn\\
    \frac{1}{p^{k(\nu+2)}} & p\mid n\\
    -\frac{p^\nu-1}{p^{k(\nu+2)}} & p\mid m, p\nmid n
\end{cases}
$$
which gives us that
$$
\sum_{k=0}^{\infty}f(p^k) = 
\begin{cases}
    1- \frac{p^\nu-1}{p-1}\Big(\frac{p}{p^{\nu+2}-1}\Big) & p\nmid mn\\
    \frac{p^{\nu+2}}{p^{\nu+2}-1} & p\mid n\\
    \frac{p^{\nu+2}-p^{\nu}}{p^{\nu+2}-1} & p\mid m, p\nmid n
\end{cases}
$$
\begin{align*}
    Y_{m, n}^{(\nu)} &= \frac{\mu(\widetilde{n})\varphi_\nu(\widetilde{n})}{m^{\nu+1}n^{\nu+1}\varphi(mn)}\prod_{p\mid n}\frac{p^{\nu+2}}{p^{\nu+2}-1}\prod_{\substack{p\mid m\\ p\nmid n}}\frac{p^{\nu+2}-p^{\nu}}{p^{\nu+2}-1}\prod_{p\nmid mn}\Big(1- \frac{p^\nu-1}{p-1}\Big(\frac{p}{p^{\nu+2}-1}\Big)\Big)\\
    \intertext{Since $p\mid \widetilde{n}\Longleftrightarrow p\mid n$}
    &= \frac{1}{m^{\nu+1}n^{\nu+1}\varphi(mn)}\prod_{p\mid n}(-1)\frac{p^\nu-1}{p}\prod_{p\mid n}\frac{p^{\nu+2}}{p^{\nu+2}-1}\prod_{\substack{p\mid m\\ p\nmid n}}\frac{p^{\nu+2}-p^{\nu}}{p^{\nu+2}-1}\prod_{p\nmid mn}\Big(1- \frac{p^\nu-1}{p-1}\Big(\frac{p}{p^{\nu+2}-1}\Big)\Big)\\
    &= \frac{S^{(\nu)}}{m^{\nu+2}n^{\nu+2}}\frac{mn}{\varphi(mn)}\prod_{p\mid n}\frac{p^{\nu+2}(1-p^\nu)(p-1)}{p^{\nu+3}-p^{\nu+2}-p^{\nu+1}+1}\prod_{\substack{p\mid m\\ p\nmid n}}\frac{(p^{\nu+2}-p^{\nu})(p-1)}{p^{\nu+3}-p^{\nu+2}-p^{\nu+1}+1}\\
    &= \frac{S^{(\nu)}}{m^{\nu+2}n^{\nu+2}}\prod_{p\mid mn}\frac{1}{1-\frac{1}{p}}\prod_{p\mid n}\frac{p^{\nu+2}(1-p^\nu)(p-1)}{p^{\nu+3}-p^{\nu+2}-p^{\nu+1}+1}\prod_{\substack{p\mid m\\ p\nmid n}}\frac{(p^{\nu+2}-p^{\nu})(p-1)}{p^{\nu+3}-p^{\nu+2}-p^{\nu+1}+1}\\
    &= \frac{S^{(\nu)}}{m^{\nu+2}n^{\nu+2}}\prod_{p\mid n}\frac{-p^{\nu+3}(p^\nu-1)}{p^{\nu+3}-p^{\nu+2}-p^{\nu+1}+1}\prod_{\substack{p\mid m\\ p\nmid n}}\frac{p^{\nu+1}(p^2-1)}{p^{\nu+3}-p^{\nu+2}-p^{\nu+1}+1}
\end{align*}
\end{proof}
\begin{corollary}\label{Smnval}
The series (\ref{Smn}) is equal to 
$$
S_{m,n}^{(\nu)} = \frac{S^{(\nu)}}{[m,n]^{\nu+2}}\prod_{p\mid \frac{n}{(m,n)}}\frac{-p^{\nu+3}(p^\nu-1)}{p^{\nu+3}-p^{\nu+2}-p^{\nu+1}+1}\prod_{p\mid \frac{m}{(m,n)}}\frac{p^{\nu+1}(p^2-1)}{p^{\nu+3}-p^{\nu+2}-p^{\nu+1}+1}.
$$
\end{corollary}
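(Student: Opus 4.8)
The plan is to obtain this as an immediate consequence of Theorem \ref{Ymnval}. Equation (\ref{Smn}) already records the identity $S_{m,n}^{(\nu)} = Y_{m,\,[m,n]/m}^{(\nu)}$, which comes from the observation that, for $m\mid i$, the divisibility $n\mid ij$ forces $[m,n]\mid ij$ (since $m\mid i\Rightarrow m\mid ij$) and conversely; so the index set defining $S_{m,n}^{(\nu)}$ coincides with the one defining $Y_{m,n'}^{(\nu)}$ precisely when $mn'=[m,n]$, i.e. $n'=[m,n]/m$. Thus it suffices to take the closed form of Theorem \ref{Ymnval}, substitute $n':=[m,n]/m$ in place of $n$, and re-express the prefactor and the two finite Euler products in terms of $m$ and $n$ alone.

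First I would simplify the prefactor. Since $mn' = m\cdot\frac{[m,n]}{m} = [m,n]$, we have
\[
\frac{S^{(\nu)}}{m^{\nu+2}(n')^{\nu+2}} = \frac{S^{(\nu)}}{(mn')^{\nu+2}} = \frac{S^{(\nu)}}{[m,n]^{\nu+2}}.
\]
Next I would rewrite the two products. From $[m,n](m,n)=mn$ one gets $n' = [m,n]/m = n/(m,n)$, so the first product $\prod_{p\mid n'}$ becomes $\prod_{p\mid n/(m,n)}$, with its factor $\frac{-p^{\nu+3}(p^\nu-1)}{p^{\nu+3}-p^{\nu+2}-p^{\nu+1}+1}$ carried over unchanged. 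For the second product $\prod_{\substack{p\mid m\\ p\nmid n'}}$ I would argue prime by prime: comparing $\mathrm{ord}_p$ of $m$, $n$, $(m,n)$ and $n'=n/(m,n)$, one rewrites the index set $\{p : p\mid m,\ p\nmid n/(m,n)\}$ in terms of $m/(m,n)$, again with the attached factor $\frac{p^{\nu+1}(p^2-1)}{p^{\nu+3}-p^{\nu+2}-p^{\nu+1}+1}$ unchanged. Assembling these three pieces gives the stated expression for $S_{m,n}^{(\nu)}$.

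The only point requiring care is this last index-set bookkeeping: one must track exactly which primes dividing $(m,n)$ land in which of the two products, and confirm that the two index sets remain disjoint after the substitution, as they are in Theorem \ref{Ymnval} (a prime dividing $n'$ and a prime dividing $m$ but not $n'$ cannot coincide). There is no analytic input beyond Theorem \ref{Ymnval} itself; the corollary is a pure rearrangement, so I expect no genuine obstacle — just the need to get the valuation comparison for the second product exactly right.
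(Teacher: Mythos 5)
Your route is the same as the paper's: apply Theorem \ref{Ymnval} with $n'=[m,n]/m=n/(m,n)$, note $mn'=[m,n]$, and re-express the two products. The prefactor and the first product go through exactly as you say. But the step you defer as ``bookkeeping'' --- rewriting the second index set $\{p : p\mid m,\ p\nmid n/(m,n)\}$ in terms of $m/(m,n)$ --- is not a harmless relabelling; it is false whenever some prime divides $m$ and $n$ to the same positive power. Comparing valuations: $p\mid m/(m,n)$ iff $\textup{ord}_p(m)>\textup{ord}_p(n)$, whereas $p\mid m$ and $p\nmid n/(m,n)$ iff $\textup{ord}_p(m)\geq 1$ and $\textup{ord}_p(n)\leq\textup{ord}_p(m)$. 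The two sets differ at every prime with $\textup{ord}_p(m)=\textup{ord}_p(n)\geq 1$, and at such a prime the factor $\frac{p^{\nu+1}(p^2-1)}{p^{\nu+3}-p^{\nu+2}-p^{\nu+1}+1}\neq 1$ occurs in $Y^{(\nu)}_{m,n'}$ but is missing from the product over $p\mid m/(m,n)$. Concretely, for $m=n=2$ the definition gives $S^{(\nu)}_{2,2}=Y^{(\nu)}_{2,1}$, and Theorem \ref{Ymnval} (which one can confirm independently by an Euler-factor computation) yields
\begin{equation*}
S^{(\nu)}_{2,2}=\frac{S^{(\nu)}}{2^{\nu+2}}\cdot\frac{3\cdot 2^{\nu+1}}{2^{\nu+3}-2^{\nu+2}-2^{\nu+1}+1}=\frac{3\,S^{(\nu)}}{2\,(2^{\nu+1}+1)},
\end{equation*}
while the displayed formula of the corollary gives only $S^{(\nu)}/2^{\nu+2}$, since both of its products are empty.

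So the step you flagged would in fact fail rather than merely require care --- but the failure is shared by the paper's own one-line proof, which rests on the same invalid assertion that $p\mid \frac{m}{(m,n)}$ if and only if $p\nmid\frac{n}{(m,n)}$. What the substitution argument actually proves is
\begin{equation*}
S_{m,n}^{(\nu)} = \frac{S^{(\nu)}}{[m,n]^{\nu+2}}\prod_{p\mid \frac{n}{(m,n)}}\frac{-p^{\nu+3}(p^\nu-1)}{p^{\nu+3}-p^{\nu+2}-p^{\nu+1}+1}\prod_{\substack{p\mid m\\ p\nmid \frac{n}{(m,n)}}}\frac{p^{\nu+1}(p^2-1)}{p^{\nu+3}-p^{\nu+2}-p^{\nu+1}+1},
\end{equation*}
which agrees with the stated corollary exactly when no prime divides $m$ and $n$ to the same positive multiplicity (for instance when $(m,n)=1$), but not in general. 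To make your proof correct you should either keep the index set $\{p\mid m,\ p\nmid n/(m,n)\}$ in the final answer or add that hypothesis; note also that downstream uses of the corollary with $m$ and $n$ sharing a prime to equal power (e.g.\ the $(m_a,m_{b_1})=(2,2)$ entry of Table 2) are sensitive to this discrepancy.
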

\begin{proof}
    This comes from using Theorem \ref{Ymnval} and noting that $\frac{[m,n]}{m}=\frac{n}{(m,n)}$ and $p\mid \frac{m}{(m,n)}$ if and only if $p\nmid \frac{n}{(m,n)}$. 
\end{proof}
In addition, we shall also note the following:
\begin{lemma}\label{kerlemma}
Let $\psi_{i,j}$ be defined as in (\ref{psiform}). Define $\{C_{i, j}\}_{i, j}$ to be a sequence such that $\sum_i\sum_j C_{i,j}$ absolutely converges. Then
$$
\sum_{i=1}^\infty\sum_{j=1}^\infty\#\ker\psi_{i, j}\cdot C_{i, j} = \sum_{i=1}^\infty\sum_{j=1}^\infty C_{i, j} + \sum_{x\in \widehat{a}V}\sum_{\substack{i=1\\ 2^{C_x}\mid i}}^\infty\sum_{\substack{j=1\\ 2^{c_a+1}\mid ij\\ \Delta(\widehat{x})\mid ij}}^\infty C_{i, j} + \sum_{\substack{x\in V/\langle\widehat{a}\rangle\\ x\neq 1}}\sum_{\substack{i=1\\ 2^{C_x}\mid i}}^\infty\sum_{\substack{j=1\\ \Delta(\widehat{x})\mid ij}}^\infty C_{i, j}
$$
where $V=\langle \widehat{a}, \widehat{b_1}, \ldots, \widehat{b_\nu} \rangle/\langle \widehat{a}, \widehat{b_1}, \ldots, \widehat{b_\nu} \rangle^2$
\end{lemma}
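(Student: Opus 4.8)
The plan is to reduce this to a finite reindexing built on the preceding lemma describing the elements of $\ker\psi_{i,j}$. First I would record the bijective picture of the kernel. That lemma shows every element of $\ker\psi_{i,j}$ has the form $x^{ij/2}\bmod\mathbb{Q}^{*ij}$ with $x\in V=\langle\widehat a,\widehat{b_1},\ldots,\widehat{b_\nu}\rangle/\langle\widehat a,\widehat{b_1},\ldots,\widehat{b_\nu}\rangle^2$, and the assignment $x\mapsto x^{ij/2}\bmod\mathbb{Q}^{*ij}$ is injective on $V$: if $x^{ij/2}\equiv y^{ij/2}\bmod\mathbb{Q}^{*ij}$ then $(x/y)^{ij/2}\in\mathbb{Q}^{*ij}$, and since $\mathbb{Q}^*_{>0}$ is torsion-free this forces $x/y\in\mathbb{Q}^{*2}$, i.e.\ $x=y$ in $V$. (The case $ij$ odd is degenerate: then $\overline W_{i,j}$ has no nontrivial $2$-torsion, so $\#\ker\psi_{i,j}=1$, and on the right-hand side the conditions $2^{c_a+1}\mid ij$ and $2^{C_x}\mid i$ with $C_x\geq 1$ are never met, so both sides reduce to $\sum_{i,j}C_{i,j}$.) Hence $\#\ker\psi_{i,j}$ equals the number of $x\in V$ with $x^{ij/2}\bmod\mathbb{Q}^{*ij}\in\ker\psi_{i,j}$.

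Next I would partition $V$. By Lemma \ref{strongmulindequiv}, $\widehat a,\widehat{b_1},\ldots,\widehat{b_\nu}$ form a basis of $V\cong(\mathbb{Z}/2)^{\nu+1}$, so $V$ is the disjoint union of $\{1\}$, the nontrivial elements of the index-two subgroup $\langle\widehat{b_1},\ldots,\widehat{b_\nu}\rangle/\langle\widehat{b_1},\ldots,\widehat{b_\nu}\rangle^2$ (the set written $V/\langle\widehat a\rangle$ in the statement), and the coset $\widehat a\langle\widehat{b_1},\ldots,\widehat{b_\nu}\rangle/\langle\widehat{b_1},\ldots,\widehat{b_\nu}\rangle^2$ (written $\widehat aV$). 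The element $x=1$ contributes the trivial kernel element for all $(i,j)$; for $x$ in the subgroup the preceding lemma gives $x^{ij/2}\in\ker\psi_{i,j}$ exactly when $2^{C_x}\mid i$ and $\Delta(\widehat x)\mid ij$; and for $x$ in the coset it gives the same together with $2^{c_a+1}\mid ij$. Therefore, for each fixed $(i,j)$,
\begin{equation*}
\#\ker\psi_{i,j}=1+\sum_{x\in\widehat aV}\mathbf 1\big(2^{C_x}\mid i,\ 2^{c_a+1}\mid ij,\ \Delta(\widehat x)\mid ij\big)+\sum_{\substack{x\in V/\langle\widehat a\rangle\\ x\neq1}}\mathbf 1\big(2^{C_x}\mid i,\ \Delta(\widehat x)\mid ij\big).
\end{equation*}

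Finally I would multiply by $C_{i,j}$ and sum over $i,j\geq1$. Since $\#V=2^{\nu+1}$ is finite, each of the two sums above has at most $2^{\nu}$ terms bounded by $1$, so $\#\ker\psi_{i,j}\leq 2^{\nu+1}$ and $\sum_{i,j}\#\ker\psi_{i,j}\,|C_{i,j}|\leq 2^{\nu+1}\sum_{i,j}|C_{i,j}|<\infty$ by hypothesis. This absolute convergence lets me interchange the finite sum over $x\in V$ with the double sum over $(i,j)$; carrying out the interchange turns the three pieces into $\sum_{i,j}C_{i,j}$, $\sum_{x\in\widehat aV}\sum_{2^{C_x}\mid i}\sum_{2^{c_a+1}\mid ij,\ \Delta(\widehat x)\mid ij}C_{i,j}$, and $\sum_{x\in V/\langle\widehat a\rangle,\,x\neq1}\sum_{2^{C_x}\mid i}\sum_{\Delta(\widehat x)\mid ij}C_{i,j}$, which is exactly the asserted identity.

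I do not expect a genuine obstacle; the argument is formal once the preceding lemma is in hand. The one step I would state most carefully is the combination of injectivity of $x\mapsto x^{ij/2}\bmod\mathbb{Q}^{*ij}$ on $V$ with that lemma's surjectivity onto $\ker\psi_{i,j}$, since it is this identification — replacing a count of kernel elements by a count over the fixed finite set $V$ — that makes the reindexing possible; after that only the partition of $V$ and the routine absolute-convergence bookkeeping remain.
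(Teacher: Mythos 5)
Your proposal is correct and takes essentially the same route as the paper: both reduce $\#\ker\psi_{i,j}$ to a count of elements $x^{ij/2}$ over the finite set $V$ using the preceding characterization of $\ker\psi_{i,j}$, split $V$ into $\{1\}$, the coset of $\widehat{a}$, and the nontrivial elements of $\langle\widehat{b_1},\ldots,\widehat{b_\nu}\rangle/\langle\widehat{b_1},\ldots,\widehat{b_\nu}\rangle^2$, and then interchange the sum over $x$ with the absolutely convergent double sum over $(i,j)$. The only step you make explicit that the paper leaves implicit is the injectivity of $x\mapsto x^{ij/2}\bmod\mathbb{Q}^{*ij}$ on $V$; just note that passing from $x/y\in\mathbb{Q}^{*2}$ to $x=y$ in $V$ uses the strong multiplicative independence of $\widehat{a},\widehat{b_1},\ldots,\widehat{b_\nu}$ (Lemma \ref{strongmulindequiv} with $l=2$), not merely torsion-freeness of $\mathbb{Q}^*_{>0}$.
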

\begin{proof}
Denoting $G=\langle \overline{a}^{ij/2}, \overline{b}_1^{ij/2}, \ldots,\overline{b}_\nu^{ij/2} \rangle$, we see that
\begin{align*}
    \sum_{i=1}^\infty\sum_{j=1}^\infty\#\ker\psi_{i, j}\cdot C_{i, j} &= \sum_{H\leqslant G }\sum_{\substack{i=1}}^\infty\sum_{\substack{j=1\\ \ker\psi_{i, j} = H}}^\infty \# H\cdot C_{i, j}
    \intertext{Reordering by containment of each element in $G$}
    &= \sum_{i=1}^\infty\sum_{\substack{j=1\\ 1\in \ker\psi_{i, j}}}^\infty C_{i, j} + \sum_{x\in \overline{a}^{ij/2}G}\sum_{i=1}^\infty\sum_{\substack{j=1\\ x\in\ker\psi_{i, j}}}^\infty C_{i, j} + \sum_{\substack{x\in G/\langle\overline{a}^{ij/2}\rangle\\ x\neq 1}}\sum_{\substack{i=1}}^\infty\sum_{\substack{j=1\\ x\in\ker\psi_{i,j}}}^\infty  C_{i, j}
    \intertext{Using the Lemma \ref{discrem}}
    \sum_{i=1}^\infty\sum_{j=1}^\infty\#\ker\psi_{i, j}\cdot C_{i, j} &= \sum_{i=1}^\infty\sum_{j=1}^\infty C_{i, j} + \sum_{x\in \widehat{a}V}\sum_{\substack{i=1\\ 2^{C_x}\mid i}}^\infty\sum_{\substack{j=1\\ 2^{c_a+1}\mid ij \\ \Delta(\widehat{x})\mid ij}}^\infty C_{i, j} + \sum_{\substack{x\in V/\langle\widehat{a}\rangle\\ x\neq 1}}\sum_{\substack{i=1\\ 2^{C_x}\mid i}}^\infty\sum_{\substack{j=1\\ \Delta(\widehat{x})\mid ij}}^\infty C_{i, j}
\end{align*}
\end{proof}
\begin{lemma}\label{switchform1var}
Let $\{C_i\}_{i\in\mathbb{N}}$ be a sequence such that $\sum C_i$ converges absolutely. Then
$$
\sum_{d\mid n}d\sum_{\substack{i=1\\ (n, i)=d}}^\infty C_i = \sum_{d\mid n}\varphi(d)\sum_{\substack{i=1\\ d\mid i}}^\infty C_i
$$
\end{lemma}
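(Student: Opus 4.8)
The plan is to compute both sides by collecting, for each fixed index $i$, the total coefficient of $C_i$. First I would observe that on the left-hand side the outer sum is finite (it ranges over the divisors of $n$), and that for every $i\in\mathbb{N}$ the greatest common divisor $(n,i)$ is itself a divisor of $n$; hence each $i$ contributes to exactly one inner sum, the one indexed by $d=(n,i)$, and so the left-hand side equals $\sum_{i=1}^{\infty}(n,i)\,C_i$. Since $(n,i)\le n$, this series converges absolutely (it is dominated by $n\sum_i|C_i|$), which is what makes all the rearrangements below legitimate.

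Next I would handle the right-hand side by interchanging the order of summation. A given index $i$ appears in the inner sum indexed by $d$ precisely when $d\mid n$ and $d\mid i$, i.e. precisely when $d\mid(n,i)$. Therefore the coefficient of $C_i$ on the right-hand side is $\sum_{d\mid(n,i)}\varphi(d)$, and by the classical identity $\sum_{d\mid m}\varphi(d)=m$ applied with $m=(n,i)$ this coefficient is exactly $(n,i)$. Hence the right-hand side also equals $\sum_{i=1}^{\infty}(n,i)\,C_i$, and the two sides agree.

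The only point requiring any care is the interchange of the (infinite) sum over $i$ with the (finite) sum over $d\mid n$ on the right-hand side, and the regrouping of the left-hand side into a single sum over $i$; both are justified by the absolute convergence established above, since $\sum_{d\mid n}\sum_{i}|C_i|\le \tau(n)\sum_i|C_i|<\infty$. So I do not anticipate a genuine obstacle here — the lemma is a bookkeeping identity, and the proof is essentially the two paragraphs above, perhaps with the remark that it is the exact one-variable prototype of the more elaborate reindexing used in Lemma~\ref{kerlemma}.
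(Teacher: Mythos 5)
Your proof is correct and follows essentially the same route as the paper's: both arguments rest on the observation that for $d\mid n$ one has $d\mid i$ if and only if $d\mid (n,i)$, followed by Gauss's identity $\sum_{d\mid m}\varphi(d)=m$ applied with $m=(n,i)$; you phrase this as comparing the coefficient of each $C_i$, while the paper groups the indices $i$ by the value of $(n,i)$, which is the same rearrangement. Your explicit justification of the rearrangements via absolute convergence is a welcome addition that the paper leaves implicit.
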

\begin{proof}
We note that $d\mid i$ if and only if $d\mid (n, i)$. This means that rearranging terms on the right hand side based on the values of $(n, i)$, we get that 
$$
\sum_{d\mid n}\varphi(d)\sum_{\substack{i=1\\ d\mid i}}^\infty C_i = \sum_{d\mid n}\Big(\sum_{j\mid d}\varphi(j)\Big)\sum_{\substack{i=1\\ (n, i)=d}}^\infty C_i
$$
By a theorem of Gauss we get that $\sum_{j\mid d}\varphi(j)= d$, which means that
$$
\sum_{d\mid n}\varphi(d)\sum_{\substack{i=1\\ d\mid i}}^\infty C_i = \sum_{d\mid n}d\sum_{\substack{i=1\\ (n, i)=d}}^\infty C_i
$$
as desired.
\end{proof}
The following lemma is also proved the same way:
\begin{lemma}\label{switchform2var}
Let $\{C_{i, j}\}_{i, j\in\mathbb{N}}$ be a sequence such that $\sum_i\sum_j C_{i, j}$ converges absolutely. Then
$$
\sum_{d\mid n}d\sum_{i=1}^\infty\sum_{\substack{j=1\\ (n, ij)=d}}^\infty C_i = \sum_{d\mid n}\varphi(d)\sum_{i=1}^\infty\sum_{\substack{j=1\\ d\mid ij}}^\infty C_i
$$
\end{lemma}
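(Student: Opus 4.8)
The plan is to run the argument of Lemma \ref{switchform1var} almost word for word, with the single summation index $i$ replaced throughout by the product $ij$ and the extra sum over $j$ carried along passively (here the $C_i$ in the displayed identity of the statement is of course to be read as $C_{i,j}$). Absolute convergence of $\sum_i\sum_j C_{i,j}$ is precisely what licenses the reorderings below.

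First I would observe that, because $d$ ranges only over divisors of $n$, the condition $d \mid ij$ appearing on the right-hand side is equivalent to $d \mid (n, ij)$. This lets me partition the pairs $(i,j)$ according to the value $e := (n, ij)$, which is itself a divisor of $n$: a pair $(i,j)$ with $(n,ij) = e$ is counted in the $d$-th inner sum exactly when $d \mid e$. Collecting the contribution of each such pair gives
$$
\sum_{d\mid n}\varphi(d)\sum_{i=1}^\infty\sum_{\substack{j=1\\ d\mid ij}}^\infty C_{i,j} \;=\; \sum_{e\mid n}\Big(\sum_{d\mid e}\varphi(d)\Big)\sum_{i=1}^\infty\sum_{\substack{j=1\\ (n,ij)=e}}^\infty C_{i,j}.
$$
Then I would apply Gauss's identity $\sum_{d\mid e}\varphi(d) = e$, which collapses the right-hand side to $\sum_{e\mid n} e\sum_{i}\sum_{(n,ij)=e} C_{i,j}$, that is, the left-hand side of the lemma after renaming $e$ back to $d$.

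The only step requiring genuine justification, and hence the (mild) main obstacle, is the legitimacy of interchanging the order of the summations over $d$, $i$, $j$ when regrouping by the value of $(n, ij)$. I would dispatch this by noting that every multiple sum in sight is bounded in absolute value by $\tau(n)\sum_{i}\sum_{j}\lvert C_{i,j}\rvert < \infty$, so the rearrangement theorem for absolutely convergent series applies. With that in place the proof is complete.
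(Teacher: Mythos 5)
Your proposal is correct and is exactly the argument the paper intends: the paper simply remarks that Lemma \ref{switchform2var} ``is proved the same way'' as Lemma \ref{switchform1var}, namely by noting $d\mid ij \Leftrightarrow d\mid (n,ij)$ for $d\mid n$, regrouping by the value of $(n,ij)$, and applying Gauss's identity $\sum_{d\mid e}\varphi(d)=e$. Your additional remark justifying the rearrangement via absolute convergence is a welcome (if routine) extra detail.
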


\subsection{Evaluation of the Density}
\begin{proof}[Proof of Theorem \ref{bigeqden}]

By Theorem \ref{maindenthm}
\begin{align*}
    \delta(a, b_1,\ldots, b_\nu) &= \sum_{i=1}^\infty\sum_{j=1}^\infty \frac{\mu(j)}{[F_{i, j}:\mathbb{Q}]}.\\
    \intertext{By definition of $f_{i,j}$}
    \delta(a, b_1,\ldots, b_\nu) &= \sum_{i=1}^\infty\sum_{j=1}^\infty \frac{f_{i, j}\mu(j)}{i^{\nu+1}j}.\\
    \intertext{By Lemma \ref{fijlem}}
    \delta(a, b_1,\ldots, b_\nu) &= \sum_{i=1}^\infty\sum_{j=1}^\infty t_{i, j}\cdot\#\ker\psi_{i,j}\cdot\frac{\mu(j)}{i^{\nu+1}j}.\\
    \intertext{Applying \ref{tijlem} and reorganising by size of $t_{i,j}$}
    \delta(a, b_1,\ldots, b_\nu) &= \sum_{\substack{d\mid m_a\\ d_1\mid m_{b_1}\\ \cdots\\ d_\nu\mid m_{b_\nu}}}\Big(d\prod_{h=1}^\nu d_h\Big)\sum_{\substack{i=1\\ (i, m_{b_h})=d_h, \\ \forall h}}^\infty\sum_{\substack{j=1\\ (ij, m_a)=d}}^\infty\#\ker\psi_{i,j}\cdot\frac{\mu(j)}{i^{\nu+1}j}.
\end{align*}
Applying Lemmas \ref{switchform1var} and \ref{switchform2var}, we get that
\begin{equation}
    \delta(a, b_1,\ldots, b_\nu) = \sum_{\substack{d\mid m_a\\ d_1\mid m_{b_1}\\ \cdots\\ d_\nu\mid m_{b_\nu}}}\Big(\varphi(d)\prod_{h=1}^\nu \varphi(d_h)\Big)\sum_{\substack{i=1\\ d_h\mid i, \\ \forall h}}^\infty\sum_{\substack{j=1\\ d\mid ij}}^\infty\#\ker\psi_{i,j}\cdot\frac{\mu(j)}{i^{\nu+1}j}
\end{equation}
And finally, applying Lemma \ref{kerlemma},
\begin{multline*}
    \delta(a, b_1,\ldots, b_\nu) = \sum_{\substack{d\mid m_a\\ d_1\mid m_{b_1}\\ \cdots\\ d_\nu\mid m_{b_\nu}}}\Big(\varphi(d)\prod_{h=1}^\nu \varphi(d_h)\Big)\Big(\sum_{\substack{i=1 \\ d_h\mid i, \\ \forall h}}^\infty\sum_{\substack{j=1\\ d\mid ij}}^\infty\frac{\mu(j)}{i^{\nu+1}j} + \sum_{x\in \widehat{a}V}\sum_{\substack{i=1\\ 2^{C_x+1}, d_h\mid i\\ \forall h}}^\infty\sum_{\substack{j=1\\ 2^{c_a+1},d, \Delta(\widehat{x})\mid ij}}^\infty \frac{\mu(j)}{i^{\nu+1}j} + \\ 
    \sum_{\substack{x\in V/\langle\widehat{a}\rangle\\ x\neq 1}}\sum_{\substack{i=1\\ 2^{C_x+1}, d_h\mid i\\ \forall h}}^\infty\sum_{\substack{j=1\\ d, \Delta(\widehat{x})\mid ij}}^\infty \frac{\mu(j)}{i^{\nu+1}j}\Big)
\end{multline*}
By the fact that $a, b\mid c$ if and only if $[a, b]\mid c$ this means that 
$$
\delta(a, b_1,\ldots, b_\nu) = \sum_{\substack{d\mid m_a\\ d_1\mid m_{b_1}\\ \cdots\\ d_\nu\mid m_{b_\nu}}}\Big(\varphi(d)\prod_{h=1}^\nu \varphi(d_h)\Big)\left(S^{(\nu)}_{[d_h]_h, d}+\sum_{x\in \widehat{a}V}S^{(\nu)}_{[2^{C_x},d_h]_h, [2^{c_a+1}, d, \Delta(\widehat{x})]}+\sum_{\substack{x\in V/\langle\widehat{a}\rangle\\ x\neq 1}}S^{(\nu)}_{[2^{C_x}, d_h]_h, [ d, \Delta(\widehat{x})]} \right)
$$
\end{proof}
\begin{corollary}
When $m_x=1$ for all $x\in V$,
$$
    \delta(a, b_1,\ldots, b_\nu) = S^{(\nu)}_{1, 1} +S^{(\nu)}_{1, [2, \Delta(a)]}+\sum_{\substack{x\in V\\ x\neq 1, a}}S^{(\nu)}_{2, \Delta(x)}.
$$
\end{corollary}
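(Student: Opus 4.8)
The plan is to feed the hypothesis $m_a=m_{b_1}=\cdots=m_{b_\nu}=1$ into the master formula of Theorem \ref{bigeqden} and watch it collapse. First I would record the simplifications this forces: each $c_a=\textup{ord}_2(m_a)$ and each $c_{b_h}$ equals $0$, so $2^{c_a+1}=2$; every hat is trivial, and since $a,b_1,\ldots,b_\nu$ are strongly multiplicatively independent every $x\in V$ then also has $m_x=1$, so $\widehat x=x$ and $\Delta(\widehat x)=\Delta(x)$ throughout; and the outer sum over $d\mid m_a,\ d_h\mid m_{b_h}$ has only the term $d=d_1=\cdots=d_\nu=1$, of weight $\varphi(1)\prod_h\varphi(1)=1$. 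Since then $[d_h]_h=[d]=1$, the three inner pieces of Theorem \ref{bigeqden} become
$$
S^{(\nu)}_{1,1},\qquad \sum_{x\in\widehat aV}S^{(\nu)}_{2^{C_x},\,[2,\Delta(x)]},\qquad \sum_{\substack{x\in V/\langle\widehat a\rangle\\ x\neq1}}S^{(\nu)}_{2^{C_x},\,\Delta(x)}.
$$

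Next I would compute $C_x$ for $x\in V$. Writing $x=a^{n_0}\prod_h b_h^{n_h}$ with every $n_h\in\{0,1\}$, the definition $C_x=\max_h\{(c_{b_h}+1)n_h\}=\max_{h\ge 1}n_h$ shows that $C_x=0$ precisely for $x=1$ and $x=a$, and $C_x=1$ for every other $x$. Thus the coset $\widehat aV$ splits into the single term $x=a$, contributing $S^{(\nu)}_{1,[2,\Delta(a)]}$, together with the remaining $x$, each contributing $S^{(\nu)}_{2,[2,\Delta(x)]}$; meanwhile each $x$ in $V/\langle\widehat a\rangle\setminus\{1\}$ contributes $S^{(\nu)}_{2,\Delta(x)}$.

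The heart of the argument is the identity $S^{(\nu)}_{2,[2,n]}=S^{(\nu)}_{2,n}$, which I would prove straight from the defining double sum (\ref{Smn}): in $S^{(\nu)}_{2,n}=\sum_{2\mid i}\sum_{n\mid ij}\mu(j)/\big(i^{\nu+1}j\,\varphi(ij)\big)$ the condition $2\mid i$ already forces $2\mid ij$, so replacing $n$ by $[2,n]$ imposes nothing further. Applying this with $n=\Delta(x)$ turns the $x\neq a$ part of the $\widehat aV$-sum into $\sum_{x\in\widehat aV,\ x\neq a}S^{(\nu)}_{2,\Delta(x)}$. Since $V$ is the disjoint union of the coset $\widehat aV$ with the subgroup $\langle\widehat{b_1},\ldots,\widehat{b_\nu}\rangle/\langle\widehat{b_1},\ldots,\widehat{b_\nu}\rangle^2$ (which represents $V/\langle\widehat a\rangle$ and contains $1$), combining the two $S^{(\nu)}_{2,\Delta(x)}$-sums ranges over exactly $V\setminus\{1,a\}$. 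Restoring $S^{(\nu)}_{1,1}$ and $S^{(\nu)}_{1,[2,\Delta(a)]}$ then yields
$$
\delta(a,b_1,\ldots,b_\nu)=S^{(\nu)}_{1,1}+S^{(\nu)}_{1,[2,\Delta(a)]}+\sum_{\substack{x\in V\\ x\neq1,a}}S^{(\nu)}_{2,\Delta(x)}.
$$

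The single point needing care is bookkeeping: aligning the decomposition $V=\{1\}\sqcup\widehat aV\sqcup\big(V/\langle\widehat a\rangle\setminus\{1\}\big)$ with the precise indexing in Theorem \ref{bigeqden}, so that no element of $V$ is double-counted or dropped, and making sure the collapse $S^{(\nu)}_{2,[2,\Delta(x)]}=S^{(\nu)}_{2,\Delta(x)}$ is read off the double-sum definition (\ref{Smn}), where it reflects the redundancy of a divisibility constraint once $2\mid i$ is in force, rather than from manipulating the closed form in Corollary \ref{Smnval}.
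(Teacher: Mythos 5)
Your proposal is correct and is essentially the derivation the paper intends: the corollary is stated as an immediate specialization of Theorem \ref{bigeqden}, and your substitution $m_a=m_{b_1}=\cdots=m_{b_\nu}=1$ (so $c_a=c_{b_h}=0$, $d=d_h=1$, $\widehat{x}=x$, $C_x=0$ exactly for $x\in\{1,a\}$) reproduces it term by term. Your justification of the absorption $S^{(\nu)}_{2,[2,\Delta(x)]}=S^{(\nu)}_{2,\Delta(x)}$ directly from the double-sum definition (\ref{Smn}) is exactly the point that makes the three pieces of Theorem \ref{bigeqden} collapse to the stated sum over $x\in V\setminus\{1,a\}$.
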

\begin{corollary}\cite[Thm. 3]{https://doi.org/10.48550/arxiv.math/9912250}
When $m_x=1$ for all $x\in V$ and $\nu =2$,
$$
    \delta(a, b) = S^{(2)}_{1, 1} + S^{(\nu)}_{1, [2, \Delta(a)]} + S^{(\nu)}_{2, \Delta(b)} + S^{(\nu)}_{2, \Delta(ab)}
$$ 
\end{corollary}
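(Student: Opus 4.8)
The plan is to read the identity off the preceding Corollary (equivalently, by specialising Theorem~\ref{bigeqden}) once the group $V$ is written out explicitly in the two-rational-number setting; no new analytic input is required, since Theorems~\ref{maindenthm} and~\ref{bigeqden} already furnish the full density under the Generalised Riemann Hypothesis.

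First I would record what the hypothesis $m_x=1$ for all $x\in V$ gives. Then $\widehat{x}=x$ and $c_x=\textup{ord}_2(m_x)=0$ for every such $x$; in particular $m_a=1$ forces $d=1$ and each $d_h=1$ in Theorem~\ref{bigeqden}, so the outer sum collapses to a single term of weight $\varphi(1)\prod_h\varphi(1)=1$, and $c_a=0$. Moreover $V=\langle a,b\rangle/\langle a,b\rangle^2$ is the Klein four-group, with non-identity classes $a$, $b$, $ab$, and for $x=a^{n_0}b^{n_1}$ the exponent $C_x=\max_h\{(c_{b_h}+1)n_h\}$ equals $1$ when $x$ has a non-trivial $b$-part and $0$ otherwise. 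Substituting into the preceding Corollary yields
\[
\delta(a,b)=S^{(\nu)}_{1,1}+S^{(\nu)}_{1,[2,\Delta(a)]}+\sum_{\substack{x\in V\\ x\neq 1,a}}S^{(\nu)}_{2,\Delta(x)},
\]
and since $V\setminus\{1,a\}=\{b,ab\}$ the last sum is exactly $S^{(\nu)}_{2,\Delta(b)}+S^{(\nu)}_{2,\Delta(ab)}$; together with $S^{(\nu)}_{1,1}=S^{(\nu)}$ from Corollary~\ref{Smnval}, this is the asserted formula, which also reproves Theorem~3 of \cite{https://doi.org/10.48550/arxiv.math/9912250}.

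If one prefers to expand Theorem~\ref{bigeqden} directly, the one point to verify is that the $a$-odd element $x=ab$ contributes $S^{(\nu)}_{2,[2^{c_a+1},\Delta(ab)]}=S^{(\nu)}_{2,[2,\Delta(ab)]}$, and that this equals $S^{(\nu)}_{2,\Delta(ab)}$: by the double-sum definition~(\ref{Smn}) of $S^{(\nu)}_{m,n}$ the constraint $2\mid i$ implicit in $S^{(\nu)}_{2,\cdot}$ already forces $2\mid ij$, so the redundant factor $2$ drops out of the least common multiple. With this absorption the contributions of $x=1,a,b,ab$ match the four summands of the statement term by term.

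There is no genuine obstacle: the substance lies entirely in Theorem~\ref{bigeqden}, and the rest is bookkeeping. The only mildly delicate point is the reorganisation of the kernel sum over $V$ into cosets of $\langle a\rangle$ underlying Lemma~\ref{kerlemma} --- one must check that summing $S^{(\nu)}_{2,\Delta(x)}$ over all of $V\setminus\{1,a\}$, and not merely over coset representatives, is precisely what emerges, which is exactly where the absorption $S^{(\nu)}_{2,[2,n]}=S^{(\nu)}_{2,n}$ is invoked, so that the $a$-odd and $a$-even halves of $V$ contribute terms of the same shape and no quadratic field is counted twice.
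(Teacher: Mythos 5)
Your proposal is correct and follows the paper's own route: the corollary is obtained, exactly as in the paper, by specializing Theorem~\ref{bigeqden} (equivalently the preceding corollary) to the two-variable case $m_a=m_b=1$, where the outer sum collapses to the single term $d=d_1=1$, $c_a=c_b=0$, and $V\setminus\{1,a\}=\{b,ab\}$ supplies the last two summands. Your explicit verification that $S^{(\nu)}_{2,[2,\Delta(ab)]}=S^{(\nu)}_{2,\Delta(ab)}$ via the defining double sum (the condition $2\mid i$ already forces $2\mid ij$) is precisely the bookkeeping the paper leaves implicit, so nothing is missing.
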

\begin{proof}[Proof of Theorem \ref{O(disc)nvar}]
    Let us fix some $x\in V, x\neq 1$ and $A, B\in \mathbb{N}$. By Corollary \ref{Smnval}
    \begin{align*}
        \left|S^{(\nu)}_{A, [B, \Delta(\widehat{x})]}\right| &\leq \frac{S^{(\nu)}}{[A, B, \Delta(\widehat{x})]^{\nu+2}}\prod_{p\mid [B, \Delta(\widehat{x})]}\frac{p^{\nu+3}(p^\nu-1)}{p^{\nu+3}-p^{\nu+2}-p^{\nu+1}+1}\prod_{p\mid A}\frac{p^{\nu+1}(p^2-1)}{p^{\nu+3}-p^{\nu+2}-p^{\nu+1}+1}\\
        &\leq \frac{S^{(\nu)}}{[B, \Delta(\widehat{x})]^{\nu+2}}\prod_{p\mid [B, \Delta(\widehat{x})]}\frac{p^{\nu+3}p^\nu}{p^{\nu+3}-p^{\nu+2}-p^{\nu+1}+1}\prod_{p\mid A}\frac{p^{\nu+1}(p^2-1)}{p^{\nu+3}-p^{\nu+2}-p^{\nu+1}+1}\\
        &\leq \frac{S^{(\nu)}[B, \Delta(\widehat{x})]^\nu}{[B, \Delta(\widehat{x})]^{\nu+2}}\prod_{p\mid [B, \Delta(\widehat{x})]}\frac{p^{\nu+3}}{p^{\nu+3}-p^{\nu+2}-p^{\nu+1}+1}\prod_{p\mid A}\frac{p^{\nu+1}(p^2-1)}{p^{\nu+3}-p^{\nu+2}-p^{\nu+1}+1}\\
        &\leq \frac{S^{(\nu)}}{[B, \Delta(\widehat{x})]^{2}}\prod_{p\mid [B, \Delta(\widehat{x})]}\frac{1}{1-\frac{1}{p}-\frac{1}{p^2}}\prod_{p\mid A}\frac{p^{\nu+1}(p^2-1)}{p^{\nu+3}-p^{\nu+2}-p^{\nu+1}+1}.
    \end{align*}
We now show that
$$
\prod_{p\mid n}\frac{1}{1-\frac{1}{p}-\frac{1}{p^2}} = O(\log n).
$$
We first see that 
$$
\prod_{p\mid n}\frac{\left(1-\frac{1}{p}\right)}{\left(1-\frac{1}{p}-\frac{1}{p^2}\right)} = \prod_{p\mid n}\left(1+\frac{1}{p^2-p-1}\right) < k\prod_{p}\left(1+\frac{1}{p^{1.5}}\right)
$$
for some $k>0$. Moreover,
$$
\prod_{p\mid n}\frac{\left(1-\frac{1}{p}\right)}{\left(1-\frac{1}{p}-\frac{1}{p^2}\right)} < k\prod_{p}\left(1+\frac{1}{p^{1.5}}\right) = k\prod_{p}\frac{\left(1-\frac{1}{p^3}\right)}{\left(1-\frac{1}{p^{1.5}}\right)} = k\frac{\zeta(1.5)}{\zeta(3)}
$$
And so 
$$
\prod_{p\mid n}\frac{1}{1-\frac{1}{p}-\frac{1}{p^2}} \leq k\frac{\zeta(1.5)}{\zeta(3)}\prod_{p\leq n}\frac{1}{1-\frac{1}{p}}.
$$
It follows from Mertens' third theorem\cite{weisstein} that
$$
\prod_{p\mid n}\frac{1}{1-\frac{1}{p}-\frac{1}{p^2}} = O(\log n).
$$
With this we may conclude that
$$
S^{(\nu)}_{A, [B, \Delta(\widehat{x})]} = O\left(\frac{\log\Delta(\widehat{x})}{\Delta(\widehat{x})^2}\right),
$$
and so
$$
\delta(a, b_1,\ldots, b_\nu) = \sum_{\substack{d\mid m_a\\ d_1\mid m_{b_1}\\ \cdots\\ d_\nu\mid m_{b_\nu}}}\Big(\varphi(d)\prod_{h=1}^\nu \varphi(d_h)\Big)S^{(\nu)}_{[d_h]_h, d}+\sum_{\substack{x\in V\\ x\neq 1}} O\left(\frac{\log\Delta(\Hat{x})}{\Delta(\widehat{x})^2}\right).
$$
\end{proof}
\begin{corollary}\label{infsetup}
Let $m_a, m_{b_1}, \ldots, m_{b_9}\in\mathbb{N}$ and define a sequence $\{a_i, b_{1, i}, \ldots, b_{\nu, i}\}_{i\in\mathbb{N}}$. such that the torsion group of $\mathbb{Q}^*/\langle -1, a_i, b_{1, i}, \ldots, b_{\nu, i} \rangle$ is $\mathbb{Z}/m_a\oplus\mathbb{Z}/m_{b_1}\oplus\ldots\oplus\mathbb{Z}/m_{b_\nu}$ and $\Delta(\widehat{a_i}), \Delta(\widehat{b_
{1, i}}), \ldots, \Delta(\widehat{b_{\nu, i}})\rightarrow\infty$ as $i\rightarrow\infty$. Then
$$
\lim_{i\rightarrow\infty} \delta(a, b_{1, i},\ldots, b_{\nu,i}) = \sum_{\substack{d\mid m_a\\ d_1\mid m_{b_1}\\ \cdots\\ d_\nu\mid m_{b_\nu}}}\Big(\varphi(d)\prod_{h=1}^\nu \varphi(d_h)\Big)S^{(\nu)}_{[d_h]_h, d}
$$
\end{corollary}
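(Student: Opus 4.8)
The plan is to apply Theorem \ref{O(disc)nvar} to each tuple $(a_i, b_{1,i},\ldots, b_{\nu,i})$ and pass to the limit. Since the torsion group of $\mathbb{Q}^*/\langle -1, a_i, b_{1,i},\ldots, b_{\nu,i}\rangle$ equals $\mathbb{Z}/m_a\oplus\mathbb{Z}/m_{b_1}\oplus\cdots\oplus\mathbb{Z}/m_{b_\nu}$ for every $i$ with the $m$'s fixed, the main term produced by that theorem,
$$
\sum_{\substack{d\mid m_a\\ d_1\mid m_{b_1}\\ \cdots\\ d_\nu\mid m_{b_\nu}}}\Big(\varphi(d)\prod_{h=1}^\nu \varphi(d_h)\Big)S^{(\nu)}_{[d_h]_h, d},
$$
runs over an index set that does not depend on $i$, and by Corollary \ref{Smnval} each $S^{(\nu)}_{[d_h]_h, d}$ is a function of $d$ and the $d_h$ alone. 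Hence this main term is a single constant, namely the right-hand side of the asserted identity, and the whole problem reduces to showing that the error contribution $\sum_{x\in V_i,\ x\neq 1} O\!\left(\log\Delta(\widehat{x})/\Delta(\widehat{x})^2\right)$ tends to $0$ as $i\to\infty$.

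Next I would observe that $V_i=\langle\widehat{a_i},\widehat{b_{1,i}},\ldots,\widehat{b_{\nu,i}}\rangle/\langle\widehat{a_i},\widehat{b_{1,i}},\ldots,\widehat{b_{\nu,i}}\rangle^2\cong(\mathbb{Z}/2)^{\nu+1}$ has cardinality $2^{\nu+1}$ for all $i$, so the error contribution is a sum of at most $2^{\nu+1}-1$ terms, each $O\!\left(\log\Delta(\widehat{x})/\Delta(\widehat{x})^2\right)$ with an implied constant which, by inspection of the proof of Theorem \ref{O(disc)nvar}, can be taken uniform in $i$, depending only on $\nu$ and the fixed $m$'s. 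Since $t\mapsto(\log t)/t^2$ is bounded on $[1,\infty)$ and tends to $0$ at infinity, it is enough to prove that $\Delta(\widehat{x})\to\infty$ as $i\to\infty$ for each fixed nontrivial class $x\in V$; the corollary then follows by letting $i\to\infty$ in the displayed formula.

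I expect this last step to be the main obstacle. Writing a nontrivial class as $x=\widehat{a_i}^{n_0}\prod_h\widehat{b_{h,i}}^{n_h}$ with $n_\ell\in\{0,1\}$, Lemma \ref{discrem} shows $\widehat{x}$ is squarefree, so $\Delta(\widehat{x})$ is comparable to the squarefree kernel of $\widehat{a_i}^{n_0}\prod_h\widehat{b_{h,i}}^{n_h}$, i.e.\ to the product of the primes dividing it to odd order. For the classes that are single generators this kernel is $\asymp\Delta(\widehat{a_i})$ or $\asymp\Delta(\widehat{b_{h,i}})$, which diverge by hypothesis; the real work is with the composite classes, where a priori the odd-order prime supports of the generators could overlap enough that the kernel of a product stays bounded even while each individual kernel grows. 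To push this through I would combine the strong multiplicative independence of $a_i,b_{1,i},\ldots,b_{\nu,i}$ (Lemma \ref{strongmulindequiv}, which holds by the hypothesis on the torsion group) with the quantitative divergence of the generator discriminants, or --- more safely --- read into the statement the requirement that $\Delta(\widehat{x})\to\infty$ for \emph{every} $x\in V$ and not merely for the $\nu+1$ generators, which is the form in which this corollary is invoked. Once all $\Delta(\widehat{x})$ diverge, each of the finitely many error terms tends to $0$, hence so does their sum, and the claimed identity follows on passing to the limit.
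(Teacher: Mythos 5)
Your route is the same as the paper's: Corollary \ref{infsetup} is stated there with no separate argument, as an immediate consequence of Theorem \ref{O(disc)nvar} --- the fixed torsion data make the main term a constant independent of $i$ (a finite sum of the universal quantities $S^{(\nu)}_{[d_h]_h,d}$ of Corollary \ref{Smnval}), and the error terms are supposed to vanish as the discriminants grow. Your remarks on uniformity of the implied constants (they depend only on $\nu$ and the fixed $m$'s, since the $d_h$ and the exponents $C_x$, $c_a$ are bounded in terms of the $m$'s) are correct.

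However, the obstacle you flag in your last paragraph is genuine, and of your two proposed remedies only the second can work. The error sum in Theorem \ref{O(disc)nvar} runs over all nontrivial classes $x\in V$, and divergence of the generator discriminants together with (strong) multiplicative independence does \emph{not} force $\Delta(\widehat{x})\to\infty$ for composite classes. Concretely, take $\nu=1$, $a_i=2p_i$, $b_{1,i}=3p_i$ with $p_i$ an increasing sequence of primes greater than $3$: these are multiplicatively independent, $\textup{tor}\,\mathbb{Q}^*/\langle -1,a_i,b_{1,i}\rangle$ is trivial so $m_{a_i}=m_{b_{1,i}}=1$ for all $i$, and $\Delta(\widehat{a_i})=\Delta(\mathbb{Q}(\sqrt{2p_i}))$ and $\Delta(\widehat{b_{1,i}})=\Delta(\mathbb{Q}(\sqrt{3p_i}))$ both tend to infinity; yet the class $x=\widehat{a_i}\,\widehat{b_{1,i}}=6p_i^2$ has squarefree kernel $6$, so $\Delta(\widehat{x})=24$ for every $i$, and in the exact formula of Theorem \ref{bigeqden} the correction term attached to this class equals the fixed nonzero constant $S^{(1)}_{2,24}$ for every $i$. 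Hence the limit of $\delta(a_i,b_{1,i})$ is $S^{(1)}_{1,1}+S^{(1)}_{2,24}$, not the claimed main term, and the corollary as literally stated fails for this sequence. So the needed divergence cannot be deduced from the stated hypotheses; the argument only closes under your ``safer'' reading, namely the stronger hypothesis that $\Delta(\widehat{x})\to\infty$ for \emph{every} nontrivial $x\in V$ (no persistent coincidences among the odd-order parts of the supports), which is evidently what the paper intends and tacitly uses, since it supplies no proof at all. One small side point: Lemma \ref{discrem} gives $m_x=1$, i.e.\ $\widehat{x}=x$, not that $\widehat{x}$ is squarefree; what your argument actually needs is only that $\Delta(\widehat{x})$ is determined by the squarefree kernel of $x$, which is also exactly what the counterexample exploits.
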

As a result of this corollary, we remove all the "noise" brought about by the discriminant terms, which gives us the approximate values around which our density values hover. This yields the values of the following table:
\begin{table}[h]
    \centering
    \begin{tabular}{|c|c|}
        \hline
        $(m_a, m_{b_1}, \ldots, m_{b_\nu})$ &  $\lim_{i\rightarrow\infty} \delta(a, b_{1, i},\ldots, b_{\nu,i})/S^{(\nu)}$\\
        \hline
        $(1, 1)$ & 1\\
        \hline
        $(2, 1)$ & 3/5\\
        \hline
        $(2, 2)$ & 41/40 \\
        \hline
    \end{tabular}
    \caption{Some values of $\lim_{i\rightarrow\infty} \delta(a, b_{1, i},\ldots, b_{\nu,i})$}
    \label{tab:my_label}
\end{table}

\section{Torsion of Root Lattices}
\par Let $(C, X)$ be an elliptic pair defined as in Section \ref{polyhedsec}. Namely, with $\Gamma\subset\mathbb{P}^2$ being a nodal cubic defined over $\mathbb{Q}$ with $[0:1:0]$ as a flex point, let $X$ be the blow-up of $\mathbb{P}^2$ at nine distinct points $z_1, \ldots, z_9\in \Gamma$ and $C$ the proper transform of $\Gamma$ under this blow-up. With this, we define $X_p$ as the geometric fiber of $X$ over the prime $p$. Also in Section \ref{polyhedsec} we had established that the problem of calculating the density of primes $p$ where $\overline{\textup{Eff}}(X_p)$ is polyhedral, denoted $\delta(X)$, can be reduced to the problem of calculating the density of primes such that there exists a rank 8 root lattice $\Lambda\subset \mathbb{E}_8$ with the property $\Lambda\subseteq \ker\overline{\textup{res}}_p$. As the smooth locus $\Gamma$ can be identified with $\mathbb{Q}^*$, $z_1, \ldots, z_9$ can likewise be identified with elements in $\mathbb{Q}^*$, which by abuse of notation we shall also denote $z_1, \ldots, z_9$, noting that we assume $z_1, \ldots, z_9$ multiplicatively independent. From here we see that the restriction map $\textup{res}: \textup{Pic } X \rightarrow \textup{Pic } C$ induces the map $\textup{res}: C^{\perp} \rightarrow \textup{Pic}^0 C$. We see that the canonical class $K=-C\in C^{\perp}$ and so we get the map $\overline{\textup{res}}: C^{\perp}/\langle C\rangle \rightarrow \textup{Pic}^0 C/\langle\textup{res } C\rangle$. As $C^{\perp}/\langle C\rangle\cong \mathbb{E}_8, \textup{Pic}^0 (C)(\mathbb{Q})\cong\mathbb{Q}^*$, and $\textup{res}(C)=z_1\ldots z_9$ we get 
$$
\overline{\textup{res}}: \mathbb{E}_8 \rightarrow \mathbb{Q}^*/\langle z_1\ldots z_9\rangle.
$$
This allows for the following identification of $\mathbb{E}_8$:
\begin{figure}[!ht]
    \centering
    \begin{tikzpicture}
    \draw[black, thick] (-3, 1) -- (4, 1);
    \draw[black, thick] (-1, 1) -- (-1, 0);
    \filldraw[black] (-1, 0) circle (4pt);
    \filldraw[black] (-1, 1) circle (4pt);
    \filldraw[black] (-2, 1) circle (4pt);
    \filldraw[black] (-3, 1) circle (4pt);
    \filldraw[black] (0, 1) circle (4pt);
    \filldraw[black] (1, 1) circle (4pt);
    \filldraw[black] (2, 1) circle (4pt);
    \filldraw[black] (3, 1) circle (4pt);
    \draw[black, thick] (4,1) node {$\bigtimes$};
    \draw[black, thick] (4,1.5) node {$z_8/z_9$};
    \draw[black, thick] (3,1.5) node {$z_7/z_8$};
    \draw[black, thick] (2,1.5) node {$z_6/z_7$};
    \draw[black, thick] (1,1.5) node {$z_5/z_6$};
    \draw[black, thick] (0,1.5) node {$z_4/z_5$};
    \draw[black, thick] (-1,1.5) node {$z_3/z_4$};
    \draw[black, thick] (-2,1.5) node {$z_2/z_3$};
    \draw[black, thick] (-3,1.5) node {$z_1/z_2$};
    \draw[black, thick] (-1,-.5) node {$1/z_1z_2z_3$};
\end{tikzpicture}
    \caption{Embedding of $\mathbb{E}_8$ into $\mathbb{Q}^*/\langle z_1\ldots z_9\rangle$}
\end{figure}
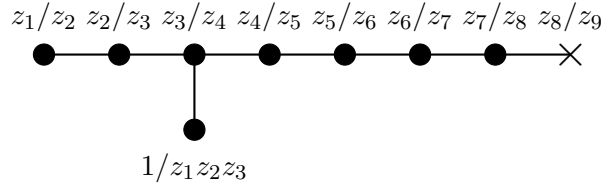
\par This identification is an injective embedding: to show $\langle z_1/z_2, \ldots, z_7/z_8, 1/z_1z_2z_3\rangle$ is multiplicatively independent, it suffices to show that it is linearly independent in $\langle z_1, \ldots, z_9\rangle/\langle z_1\ldots z_9\rangle = \langle z_1, \ldots, z_8\rangle$, and since the simple roots $\mathbb{E}_8$ form the basis of a rank 8 submodule in $\mathbb{Z}^8$, it follows that $\langle z_1/z_2, z_2/z_3, \ldots, z_7/z_8, 1/z_1z_2z_3\rangle$ is multiplicatively independent. Furthermore for any prime $p$, we may define $(C_p, X_p)$ as the geometric fiber of $(C, X)$ over $p$. The restriction map in this case $\textup{res}_p:\textup{Pic }X_p\rightarrow\textup{Pic }C_p$ similarly induces the map $\overline{\textup{res}}_p:C_p^{\perp}/\langle C_p\rangle\rightarrow \textup{Pic}^0C_p/\langle\textup{res}_p(C)\rangle$, which since $\textup{Pic}^0 (C)(\mathbb{F}_p)\cong\mathbb{F}_p^*$, gives us the map:
$$
\overline{\textup{res}}_p:\mathbb{E}_8\rightarrow \mathbb{F}^*_p/\langle z_1\ldots z_9 \textup{ mod } p \rangle.
$$
By \cite[Thm. 3.8]{https://doi.org/10.48550/arxiv.2009.14298} we see that $\overline{\textup{Eff}}(X_p)$ is polyhedral if and only if there is a rank 8 root lattice $\Lambda\subseteq\mathbb{E}_8$ such that $\Lambda\subseteq\ker\overline{\textup{res}_p}$, which implies that $\overline{\textup{Eff}}(X_p)$ is polyhedral if and only if 
there exist 8 elements $b_1, \ldots, b_8 \in \langle z_1/z_2, z_2/z_3, \ldots, z_8/z_9, 1/z_1z_2z_3\rangle$ which generates a root lattice in $\mathbb{E}_8$ of rank 8 such that $b_1, \ldots, b_8\in \langle z_1\ldots z_9\rangle \textup{ mod } p$. And so the problem of calculating $\delta(X)$ reduces to finding the density of primes $p$ such that there exist 8 elements $b_1, \ldots, b_8 \in \langle z_1/z_2, z_2/z_3, \ldots, z_8/z_9, 1/z_1z_2z_3\rangle$ which generate a root lattice in $\mathbb{E}_8$ of rank 8 such that $b_1, \ldots, b_8\in \langle z_1\ldots z_9\rangle \textup{ mod } p$. In Section 3, we have shown under the assumption of the Generalized Riemann Hypothesis that for multiplicatively independent $a, b_1, \ldots, b_8$ the density that $b_1, \ldots, b_8\in \langle a \rangle \textup{ mod }p$ is a rational multiple of $S^{(8)}$, proving Theorem \ref{ratmulnvar}. While the statement of Theorem \ref{ratmulnvar} suffices to prove Theorem \ref{mainresult}, it fails to produce a specific value. To do this let us define $\overline{\delta}(\Lambda)$ to be the density of primes $p$ such that $\ker\overline{\textup{res}_p} = \Lambda$ for some rank 8 root lattice $\Lambda$. We notice immediately that $\overline{\delta}(\mathbb{E}_8)=\delta(\mathbb{E}_8)$, and in the case of other rank 8 lattices $\Lambda$ that $\overline{\delta}(\Lambda)$ equals the density of primes such that $\langle z_1\ldots z_9\rangle \textup{ mod }p \equiv \Lambda\textup{ mod }p$. This reduces the problem of getting an explicit value of $\delta(X)$ to calculating $\overline{\delta}(\Lambda)$ for each rank 8 root lattice $\Lambda$, which requires knowing the number of lattice of each type and $\textup{tor }\mathbb{Q}^*/\langle z_1\ldots z_9, \Lambda\rangle$ for any given root lattice $\Lambda$. The task of counting the root lattices can be determined via \cite[Eqn. 10.3]{oshima2007classification}:
\begin{equation*}
    \#O_\Lambda = (\#)\cdot\#W_{\mathbb{E}_8}/((\#_{\mathbb{E}_8})\cdot\#\textup{Out}(\Lambda)\cdot\# W_{\Lambda}\cdot\# W_{\Lambda^{\perp}})
\end{equation*}
Here $O_\Lambda$ is the set of all root lattices of type $\Lambda$, $(\#) ,\#_\Lambda$ are given by \cite[Table 10.2]{oshima2007classification}, $W_\Lambda$ denotes the Weyl group of $\Lambda$, $N_\Lambda$ denotes the Normaliser of $\Lambda$ in $W_{\mathbb{E}_8}$, and $\textup{Out}(\Lambda)$ defined as in \cite[Sec. 2]{oshima2007classification}. As $\Lambda$ is of rank 8, $\Lambda^{\perp}=\O$ and so $\# W_{\Lambda^{\perp}}=1$. Applying the formula, we get the following:
\begin{table}[h]
    \centering
    \begin{tabular}{|c|c|c|c|c|c|c|}
\hline
{ $\Lambda$}        & { $\#\textup{Out}(\Lambda)$} & { $(\#)$} & { $\#_\Lambda$} & { $\#W_\Lambda$}      & { $\#N_\Lambda$}    & { $\#O_\Lambda$}  \\
\hline
{ $\mathbb{A}_8$}       & { 2}        & { 1}  & { 1}   & { 362880}    & { 725760}    & { 960}   \\
\hline
{ $\mathbb{D}_8$}       & { 2}        & { 2}  & { 1}   & { 5160960}   & { 5160960}   & { 135}   \\
\hline
{ $\mathbb{E}_7\oplus \mathbb{A}_1$}    & { 1}        & { 1}  & { 1}   & { 5806080}   & { 5806080}   & { 120}   \\
\hline
{ $\mathbb{A}_5+\mathbb{A}_2+\mathbb{A}_1$} & { 4}        & { 2}  & { 1}   & { 8640}      & { 17280}     & { 40320} \\
\hline
{ $\mathbb{A}_4^{\oplus 2}$}      & { 8}        & { 2}  & { 1}   & { 14400}     & { 57600}     & { 12096} \\
\hline
{ $\mathbb{E}_6+\mathbb{A}_2$}    & { 4}        & { 2}  & { 1}   & { 311040}    & { 622080}    & { 1120}  \\
\hline
{ $\mathbb{A}_7+\mathbb{A}_1$}    & { 2}        & { 1}  & { 1}   & { 80640}     & { 161280}    & { 4320}  \\
\hline
$\mathbb{D}_6+\mathbb{A}_2^{\oplus 2}$       & { 4}        & { 2}  & { 1}   & { 645120}    & { 1290240}   & { 540}   \\
\hline
{ $\mathbb{D}_5+\mathbb{A}_3$}    & { 4}        & { 2}  & { 1}   & { 46080}     & { 92160}     & { 7560}  \\
\hline
$\mathbb{D}_4^{\oplus 2}$         & { 72}       & { 6}  & { 1}   & { 36864}     & { 442368}    & { 1575}  \\
\hline
$(\mathbb{A}_3+\mathbb{A}_1)^{\oplus 2}$    & { 16}       & { 2}  & { 1}   & { 2304}      & { 18432}     & { 37800} \\
\hline
$\mathbb{A}_2^{\oplus 4}$         & { 384}      & { 8}  & { 1}   & { 1296}      & { 62208}     & { 11200} \\
\hline
{ $\mathbb{E}_8$}       & { 1}        & { 1}  & { 1}   & { 696729600} & { 696729600} & { 1} \\
\hline
\end{tabular}
    \label{LatticeNum}
    \caption{The number of lattices conjugate to $\Lambda$ in $\mathbb{E}_8$}
\end{table}
\newpage
\par Now, before we calculate $\textup{tor }\mathbb{Q}^*/\langle z_1\ldots z_9, \Lambda\rangle$, we shall first make the following observation: let $\Lambda'\subseteq\mathbb{E}_8$ be a lattice. Then $\Lambda\subset\Lambda'$ if and only if there exists a unique quotient epimorphism $\varphi:\mathbb{E}_8/\Lambda\twoheadrightarrow \mathbb{E}_8/\Lambda'$. The value of $\mathbb{E}_8/\Lambda$ is equal to the geometric Mordell-Weil group $E(K)$ in main theorem of \cite{OguisoShioda91} which is as follows:
\begin{table}[ht]
        \centering
        \begin{tabular}{ |c|c| } 
         \hline
         $\Lambda$ & $E(K)$\cite{OguisoShioda91} \\ 
         \hline
         $\mathbb{E}_8$ & $1$\\ 
         \hline
         $\mathbb{A}_8$ & $\mathbb{Z}/3$ \\ 
         \hline
         $\mathbb{D}_8$ & $\mathbb{Z}/2$\\ 
         \hline
         $\mathbb{E}_7\oplus\mathbb{A}_1$ & $\mathbb{Z}/2$ \\ 
         \hline
         $\mathbb{A}_5\oplus\mathbb{A}_2\oplus\mathbb{A}_1$ & $\mathbb{Z}/6$ \\ 
         \hline
         $\mathbb{A}_4^{\oplus 2}$ & $\mathbb{Z}/5$ \\ 
         \hline
         $\mathbb{A}_2^{\oplus 4}$ & $(\mathbb{Z}/3)^2$ \\ 
         \hline
         $\mathbb{E}_6\oplus\mathbb{A}_2$ & $\mathbb{Z}/3$ \\ 
         \hline
         $\mathbb{A}_7\oplus\mathbb{A}_1$ & $\mathbb{Z}/4$ \\ 
         \hline
         $\mathbb{D}_6\oplus\mathbb{A}_1^{\oplus 2}$ & $(\mathbb{Z}/2)^2$  \\ 
         \hline
         $\mathbb{D}_5\oplus\mathbb{A}_3$ & $\mathbb{Z}/4$ \\ 
         \hline
         $\mathbb{D}_4^{\oplus 2}$ & $(\mathbb{Z}/2)^2$ \\ 
         \hline
         $(\mathbb{A}_3\oplus\mathbb{A}_1)^{\oplus 2}$ & $\mathbb{Z}/4\oplus \mathbb{Z}/2$ \\ 
         \hline
        \end{tabular}
        \caption{\cite{OguisoShioda91}}
        \label{tab:E8tor_table}
    \end{table}
\par With this in mind, we may state the following:
\begin{lemma}
    Let $R$ be a rank 8 root lattice contained in $\mathbb{E}_8$ and $\Lambda$ a rank 8 sublattice of $\mathbb{E}_8$ which contains $R$. Then $\Lambda$ is a root lattice. Moreover, the containment of rank 8 root lattice is as follows:
    \begin{center}
        \begin{tikzcd}
                                                                      &                                                                                                                             & \mathbb{E}_8 \arrow[d, "1" description, no head] \arrow[rd, "1" description, no head] \arrow[rrd, "1" description, no head] \arrow[ld, "1" description, no head] \arrow[lld, "1" description, no head] &                                                                                                                                               &                                \\
\mathbb{A}_8                                                          & \mathbb{D}_8 \arrow[d, "3" description, no head] \arrow[ld, "1" description, no head] \arrow[rdd, "1" description, no head] & \mathbb{E}_6\oplus \mathbb{A}_2 \arrow[rd, "1" description, no head] \arrow[d, "4" description, no head]                                                                                               & \mathbb{E}_7\oplus\mathbb{A}_1 \arrow[d, "1" description, no head] \arrow[rd, "1" description, no head] \arrow[ldd, "2" description, no head] & \mathbb{A}_4^{\oplus 2}        \\
\mathbb{D}_5\oplus\mathbb{A}_3 \arrow[rrdd, "2" description, no head] & \mathbb{D}_4^{\oplus 2}                                                                                                     & \mathbb{A}_2^{\oplus 4}                                                                                                                                                                                & \mathbb{A}_5\oplus\mathbb{A}_2\oplus\mathbb{A}_1                                                                                              & \mathbb{A}_7\oplus\mathbb{A}_1 \\
                                                                      &                                                                                                                             & \mathbb{D}_6\oplus\mathbb{A}_1^{\oplus 2} \arrow[d, "1" description, no head]                                                                                                                          &                                                                                                                                               &                                \\
                                                                      &                                                                                                                             & (\mathbb{A}_3\oplus\mathbb{A}_1)^{\oplus 2}                                                                                                                                                            &                                                                                                                                               &                               
\end{tikzcd}
    \end{center}
    where each number is equal to the number of superlattices of the given type.
\end{lemma}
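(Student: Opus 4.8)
The plan is to reduce the lemma to a finite computation with lattices, built from three ingredients: the Borel--de Siebenthal classification of maximal-rank root subsystems, the theory of glue groups (discriminant forms) of root lattices, and the table of $\mathbb{E}_8/\Lambda$ recorded above. First I would set up the dictionary. Since $\mathbb{E}_8$ is the unique even unimodular lattice of rank $8$, for a fixed rank-$8$ root lattice $R\subseteq\mathbb{E}_8$ one has $R\subseteq\mathbb{E}_8=\mathbb{E}_8^{*}\subseteq R^{*}$, so $\mathbb{E}_8/R$ is a subgroup of the finite discriminant group $R^{*}/R$, and the lattices $\Lambda$ with $R\subseteq\Lambda\subseteq\mathbb{E}_8$ correspond bijectively to subgroups of $\mathbb{E}_8/R$ via $\Lambda\mapsto\Lambda/R$; every such subgroup is automatically isotropic for the discriminant quadratic form because $\mathbb{E}_8$ is even. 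In particular only finitely many $\Lambda$ occur, their number being bounded by the number of subgroups of $\mathbb{E}_8/R$, a group of order $[\mathbb{E}_8:R]=\sqrt{\det R}$.

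For the first assertion I would show that each such $\Lambda$ is a root lattice. Let $\Lambda_0\subseteq\Lambda$ be the sublattice generated by the norm-$2$ vectors of $\Lambda$ --- equivalently by the roots of $\mathbb{E}_8$ lying in $\Lambda$. Then $R\subseteq\Lambda_0$, so $\Lambda_0$ has full rank and is itself a root lattice. If $\Lambda_0\subsetneq\Lambda$, then choosing a prime $p$ and $v\in\Lambda$ with $pv\in\Lambda_0$ but $v\notin\Lambda_0$ produces an index-$p$ even overlattice $\Lambda_0+\mathbb{Z}v\subseteq\mathbb{E}_8$ of the root lattice $\Lambda_0$ that acquires no new root; by the dictionary this corresponds to a nonzero isotropic class in $\mathbb{E}_8/\Lambda_0$ all of whose representatives have norm $\geq 4$. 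Inspecting the discriminant forms of the rank-$8$ root lattices that can play the role of $\Lambda_0$ here (products of $\mathbb{A}$, $\mathbb{D}$, $\mathbb{E}$ summands with only small $\mathbb{A}$-parts, the relevant glue groups $\mathbb{E}_8/\Lambda_0$ being those tabulated above), one checks that every nonzero isotropic class does have a norm-$2$ representative --- for instance the eight nonzero classes of $\mathbb{E}_8/\mathbb{A}_2^{\oplus4}\cong(\mathbb{Z}/3)^2$ are the weight-$3$ codewords of the ternary tetracode and have norm $2$. This contradiction forces $\Lambda_0=\Lambda$, so $\Lambda$ is a root lattice, whose isomorphism type is then read off from its root system computed inside $\mathbb{E}_8$.

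Next I would assemble the diagram as the Hasse diagram of the poset of rank-$8$ root sublattices of $\mathbb{E}_8$, working downward from $\mathbb{E}_8$. Borel--de Siebenthal applied to the extended diagram $\widetilde{\mathbb{E}_8}$ --- deleting each node of prime mark --- yields its five maximal proper rank-$8$ root sublattices $\mathbb{D}_8$, $\mathbb{A}_8$, $\mathbb{E}_7\oplus\mathbb{A}_1$, $\mathbb{E}_6\oplus\mathbb{A}_2$, $\mathbb{A}_4^{\oplus2}$. Iterating --- using that $\widetilde{\mathbb{A}_n}$ has all marks $1$, so an $\mathbb{A}$-summand contributes no proper maximal-rank subsystem; that deleting a mark-$2$ node of $\widetilde{\mathbb{D}_n}$ gives the $\mathbb{D}_k\oplus\mathbb{D}_{n-k}$; and the analogous deletions for $\widetilde{\mathbb{E}_6}$, $\widetilde{\mathbb{E}_7}$ (producing $\mathbb{A}_2^{\oplus3}$ and $\mathbb{A}_5\oplus\mathbb{A}_1$ from $\mathbb{E}_6$, and $\mathbb{A}_7$, $\mathbb{D}_6\oplus\mathbb{A}_1$, $\mathbb{A}_5\oplus\mathbb{A}_2$ from $\mathbb{E}_7$) --- generates every node of the diagram, down to $(\mathbb{A}_3\oplus\mathbb{A}_1)^{\oplus2}$, and shows there are no others. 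For the edge labels: given a fixed copy of the lower lattice $R$, the number of superlattices inside $\mathbb{E}_8$ of the prescribed covering type $\Lambda'$ equals the number of subgroups $H\leq\mathbb{E}_8/R$ for which the corresponding overlattice has root system of that type; here $\mathbb{E}_8/R$ is the group $E(K)$ tabulated above, and transitivity of the $W_{\mathbb{E}_8}$-action on the set of copies of each isomorphism type (again via Borel--de Siebenthal) makes this count independent of the chosen copy. As a consistency check I would use the incidence identity $\#O_R\cdot n(R,\Lambda')=\#O_{\Lambda'}\cdot n'(R,\Lambda')$, where $n(R,\Lambda')$ is the edge label, $n'(R,\Lambda')$ is the number of type-$R$ sublattices of a fixed type-$\Lambda'$ lattice, and the $\#O_{(-)}$ are the lattice counts already computed; together these pin down all the multiplicities.

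The main obstacle is the bookkeeping in this last step: correctly matching the subgroups of a glue group to isomorphism types, and in particular the cases with $\mathbb{E}_8/R\cong(\mathbb{Z}/2)^2$ or $(\mathbb{Z}/3)^2$ (such as $R=\mathbb{D}_4^{\oplus2}$ or $\mathbb{A}_2^{\oplus4}$), where one must verify that all subgroups of a fixed order yield mutually isomorphic overlattices --- which is exactly where transitivity of the Weyl group action, or failing that an explicit coordinate model of $\mathbb{E}_8$ together with its root sublattices, becomes indispensable.
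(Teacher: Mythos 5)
Your proposal is correct in outline and shares the paper's basic skeleton --- the bijection between intermediate lattices $R\subseteq\Lambda\subseteq\mathbb{E}_8$ and subgroups of $\mathbb{E}_8/R$ (the paper's opening observation, with $\mathbb{E}_8/R$ read off from the Oguiso--Shioda table), together with Weyl-orbit counts and a double-counting identity for the multiplicities (the paper uses exactly your incidence identity, but only for the $\mathbb{D}_4^{\oplus 2}\subset\mathbb{D}_8$ case). Where you genuinely diverge is in how the first assertion (that every intermediate rank-8 lattice is a root lattice) is established: the paper proves it case by case, by explicitly exhibiting each proper superlattice via embeddings of root subsystems into affine Dynkin diagrams (Borel--de Siebenthal-style pictures in the appendix) and via orthogonal-complement arguments such as $\mathbb{A}_2^{\perp}=\mathbb{E}_6$, $\mathbb{A}_1^{\perp}=\mathbb{E}_7$, $\mathbb{A}_3^{\perp}=\mathbb{D}_5$, whereas you give a uniform mechanism: pass to the root closure $\Lambda_0$, reduce to an index-$p$ overlattice with no new roots, and rule it out by checking that every nonzero class of the glue group $\mathbb{E}_8/\Lambda_0$ contains a norm-$2$ vector. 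That check is indeed true for all thirteen types (the glue classes are the standard root-class gluings, e.g.\ $3\cdot 6/9=2$ for $\mathbb{A}_8$, the spinor class of norm $8/4=2$ for $\mathbb{D}_8$, the weight-$3$ tetracode words for $\mathbb{A}_2^{\oplus4}$), so your argument buys a cleaner, more conceptual proof of root-lattice-ness than the paper's constructions --- at the price that you must still know the actual glue groups $\mathbb{E}_8/\Lambda_0\subseteq\Lambda_0^{*}/\Lambda_0$ for each embedding and must, like the paper, rely on each type forming a single $W_{\mathbb{E}_8}$-orbit for the edge labels to be well defined. The remaining work you defer --- matching subgroups of the glue group to the isomorphism types of the overlattices --- is exactly the content of the paper's figures and of its counting argument for $\mathbb{D}_4^{\oplus2}$, so both routes end in a comparable finite verification; yours just packages the ``is it a root lattice?'' step once and for all instead of lattice by lattice.
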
\label{rootcontainment}
\begin{proof}
\par This trivially holds for $\mathbb{A}_8, \mathbb{D}_8, \mathbb{E}_6\oplus\mathbb{A}_2, \mathbb{E}_7\oplus\mathbb{A}_1, \mathbb{A}_4^{\oplus 2}$ as $\mathbb{E}_8/R$ are of prime order, and so we shall consider the rest of the lattices in cases. 
\par Case $\mathbb{A}_7\oplus\mathbb{A}_1, \mathbb{D}_5\oplus\mathbb{A}_3:$ As $\mathbb{E}_8/R\cong\mathbb{Z}/4$ for these root lattices have only one nontrivial subgroup, they have only one superlattice in $\mathbb{E}_8$, which we can show constructively, by showing the embedding of a root lattice into the affine Dynkin diagram of these lattices, to be a root lattice as in Figures \ref{fig:A7A1E7A1} and \ref{fig:D5A3D8}.

\par Case $\mathbb{A}_5\oplus\mathbb{A}_2\oplus\mathbb{A}_1:$ As $\mathbb{E}_8/\mathbb{A}_5\oplus\mathbb{A}_2\oplus\mathbb{A}_1\cong \mathbb{Z}/6, \mathbb{A}_5\oplus\mathbb{A}_2\oplus\mathbb{A}_1$ has only 2 superlattices in $\mathbb{E}_8$. Again this is shown constructively as in Figures \ref{fig:A5A2A1E7A1} and \ref{A5A2A1E6A2}

\par Case $\mathbb{A}_2^{\oplus 4}:$ As $\mathbb{E}_8/\mathbb{A}_2^{\oplus 4}\cong (\mathbb{Z}/3)^2, \mathbb{A}_2^{\oplus 4}$ has 4 superlattices in $\mathbb{E}_8$. We see that $\mathbb{A}_2^{\perp}=\mathbb{E}_6$ in $\mathbb{E}_8$, and so for any given copy of $\mathbb{A}_2^{\oplus 4}$ it is contained by 4 copies of $\mathbb{E}_6\oplus\mathbb{A}_2$. The containment of $\mathbb{A}_2^{\oplus 3}$ in $\mathbb{E}_6$ is shown in Figure \ref{4A2E6A2}.

\par Case $\mathbb{D}_6\oplus\mathbb{A}_1^{\oplus 2}:$ As $\mathbb{E}_8/(\mathbb{D}_6\oplus\mathbb{A}_1^{\oplus 2})\cong (\mathbb{Z}/2)^2, \mathbb{D}_6\oplus\mathbb{A}_1^{\oplus 2}$ is contained by 3 superlattices in $\mathbb{E}_8$. As $\mathbb{A}_1^{\perp}=\mathbb{E}_7$, any given copy of $\mathbb{D}_6\oplus\mathbb{A}_1^{\oplus 2}$ is contained by two copies of $\mathbb{E}_7\oplus\mathbb{A}_1$. Finally we can show constructively that $\mathbb{D}_6\oplus\mathbb{A}_1^{\oplus 2}$ is contained by $\mathbb{D}_8$ as in Figure \ref{D62A1D8}.

\par Case $\mathbb{D}_4^{\oplus 2}: \mathbb{D}_4^{\oplus 2}$ has 3 superlattices in $\mathbb{E}_8$. We see by a calculation that there exist 35 copies of $\mathbb{D}_4^{\oplus 2}$ in $\mathbb{D}_8$. Furthermore we see trivially the following equation:
\begin{equation}
    \#\{\Lambda\subset \mathbb{E}_8\mid \Lambda\cong \mathbb{D}_4^{\oplus 2} \}\#\{\Lambda\subset \mathbb{E}_8\mid \Lambda\cong \mathbb{D}_8,  \mathbb{D}_4^{\oplus 2}\subset \Lambda \} = \#\{\Lambda\subset \mathbb{E}_8\mid \Lambda\cong \mathbb{D}_8 \}
\end{equation}
Applying the values from Table \ref{LatticeNum} we see that all three superlattices are of type $\mathbb{D}_8$. 
\par Case $(\mathbb{A}_3\oplus\mathbb{A}_1)^{\oplus 2}:$ As $\mathbb{E}_8/(\mathbb{A}_3\oplus\mathbb{A}_1)^{\oplus 2}\cong \mathbb{Z}/4\oplus\mathbb{Z}/2$, we see that it is sufficient to show that all copies of $\mathbb{Z}/4$ and $(\mathbb{Z}/2)^2$ correspond to root lattices. We see that this lattice is contained by $\mathbb{D}_6\oplus\mathbb{A}_1^{\oplus 2}$ and $\mathbb{D}_5\oplus\mathbb{A}_3$ as in Figures \ref{2A32A1D62A1} and \ref{2A32A1D5A3}.

\par We see that $\mathbb{E}_8/(\mathbb{D}_6\oplus\mathbb{A}_1^{\oplus 2})\cong(\mathbb{Z}/2)^2$ and so there exists only one superlattice of type $\mathbb{D}_6\oplus\mathbb{A}_1^{\oplus 2}$, and finally since $\mathbb{A}_3^{\perp}=\mathbb{D}_5$ in $\mathbb{E}_8$ that there are two superlattices of $(\mathbb{A}_3\oplus\mathbb{A}_1)^{\oplus 2}$ of type $\mathbb{D}_5\oplus\mathbb{A}_3$. Since $\mathbb{E}_8/(\mathbb{D}_5\oplus\mathbb{A}_3)\cong\mathbb{Z}/4$, we have shown that all lattices corresponding to $\mathbb{Z}/4$ and $(\mathbb{Z}/2)^2$ are root lattices, which means that all subgroups of $\mathbb{Z}/4\oplus\mathbb{Z}/2$ correspond to root lattices. 
\par And so all rank 8 superlattices of a rank 8 root lattice in $\mathbb{E}_8$ are root lattices. 
\end{proof}
\begin{corollary}
    For some rank 8 lattice $\Lambda\subset\mathbb{E}_8$, let $\overline{\delta}(\Lambda)$ be the density of primes such that $\ker\overline{\textup{res}}_p = \Lambda$. Then the density of polyhedral primes p is
    \begin{equation}
    \delta(X) = \overline{\delta}(\mathbb{E}_8) + \sum_{\substack{\Lambda \subset \mathbb{E}_8\\ \Lambda \textup{ is a root lattice}\\ \textup{of rank 8}}} \overline{\delta}(\Lambda)
    \end{equation}
\end{corollary}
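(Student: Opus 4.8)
The plan is to combine the polyhedrality criterion recorded in Section \ref{polyhedsec} (namely \cite[Thm. 3.8]{https://doi.org/10.48550/arxiv.2009.14298}) with Lemma \ref{rootcontainment} and the finite additivity of natural density. First I would discard a finite exceptional set $\Sigma$ of primes: those of bad reduction for $X$, those for which $\overline{\textup{res}}_p$ fails to be defined, and those dividing numerators or denominators of the $z_i$. For $p\notin\Sigma$ the map $\overline{\textup{res}}_p\colon\mathbb{E}_8\to\mathbb{F}_p^*/\langle z_1\ldots z_9\bmod p\rangle$ is available and, by \cite[Thm. 3.8]{https://doi.org/10.48550/arxiv.2009.14298}, $\pseff(X_p)$ is polyhedral precisely when $\ker\overline{\textup{res}}_p$ contains a rank $8$ root sublattice of $\mathbb{E}_8$. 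Since $\Sigma$ is finite it has natural density $0$, so it may be ignored throughout.

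The crux is to upgrade ``$\ker\overline{\textup{res}}_p$ \emph{contains} a rank $8$ root lattice'' to ``$\ker\overline{\textup{res}}_p$ \emph{is} a rank $8$ root lattice,'' and this is exactly what Lemma \ref{rootcontainment} supplies. For $p\notin\Sigma$, $\ker\overline{\textup{res}}_p$ is a subgroup of the free abelian group $\mathbb{E}_8\cong\mathbb{Z}^8$, hence a sublattice of rank at most $8$; if it contains a rank $8$ root lattice $R$, then it has rank exactly $8$, and the chain $R\subseteq\ker\overline{\textup{res}}_p\subseteq\mathbb{E}_8$ presents $\ker\overline{\textup{res}}_p$ as a rank $8$ sublattice of $\mathbb{E}_8$ containing a rank $8$ root lattice. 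Applying Lemma \ref{rootcontainment} with $\Lambda=\ker\overline{\textup{res}}_p$ then forces $\ker\overline{\textup{res}}_p$ itself to be a root lattice. The reverse implication is immediate, since a rank $8$ root lattice contains itself. Hence, for $p\notin\Sigma$, polyhedrality of $\pseff(X_p)$ is equivalent to the condition that $\ker\overline{\textup{res}}_p$ equals one of the finitely many rank $8$ root sublattices of $\mathbb{E}_8$. This reduction is the only genuinely nontrivial ingredient; everything else is bookkeeping, and the ``hard part'' is entirely absorbed into the already-proved Lemma \ref{rootcontainment}.

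Finally I would package the densities. Let $\mathcal{L}$ denote the set of rank $8$ root sublattices of $\mathbb{E}_8$; by Lemma \ref{rootcontainment} (equivalently, by the enumeration in Table \ref{LatticeNum}) this set is finite and contains $\mathbb{E}_8$. By the previous paragraph the set of polyhedral primes coincides, up to the finite set $\Sigma$, with the disjoint union $\bigsqcup_{\Lambda\in\mathcal{L}}\{p\text{ prime}:\ker\overline{\textup{res}}_p=\Lambda\}$; the union is disjoint because $p$ determines $\ker\overline{\textup{res}}_p$ uniquely, and it is a \emph{finite} union. Each constituent set has a well-defined natural density $\overline{\delta}(\Lambda)$ --- existence following, under the Generalised Riemann Hypothesis, from the analytic results of Section \ref{radextsec} applied to a choice of generators of $\Lambda$ inside $\langle z_1/z_2,\ldots,z_8/z_9,1/z_1z_2z_3\rangle$ --- so finite additivity of natural density yields $\delta(X)=\sum_{\Lambda\in\mathcal{L}}\overline{\delta}(\Lambda)$. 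Splitting off the term $\Lambda=\mathbb{E}_8$ gives the stated identity. I do not anticipate any real obstacle in this last step beyond the verification that $\mathcal{L}$ is finite, which is immediate.
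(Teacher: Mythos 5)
Your argument is correct and matches the route the paper intends: the corollary is stated there as an immediate consequence of the polyhedrality criterion from Section \ref{polyhedsec} together with the lemma that any rank $8$ sublattice of $\mathbb{E}_8$ containing a rank $8$ root lattice is itself a root lattice, followed by exactly the disjointness and finite-additivity bookkeeping you describe. Your explicit upgrade from ``$\ker\overline{\textup{res}}_p$ contains a rank $8$ root lattice'' to ``$\ker\overline{\textup{res}}_p$ is a rank $8$ root lattice,'' and the splitting off of the $\Lambda=\mathbb{E}_8$ term, is precisely the (unwritten) content of the paper's deduction.
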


And so what we are left to calculate is $\textup{tor }\mathbb{Q}^*/\langle b_1\ldots b_9, \Lambda\rangle$. In order to do this, we may consider $\Pi/\Lambda$, where $\Pi= \langle z_1, \ldots, z_9\rangle/\langle z_1\ldots z_9\rangle$. Once found, as $\mathbb{Q}^*/\langle z_1\ldots z_9\rangle$ is torsion free, we can fix a basis of  $z_1\ldots z_9, \beta_1, \ldots, \beta_8$ of $\langle z_1\ldots z_9, \Lambda\rangle$ such that $\Pi/\Lambda\cong\mathbb{Z}/m_{\beta_1}\oplus\ldots\oplus\mathbb{Z}/m_{\beta_8}$ and $\langle \beta_1, \ldots, \beta_8\rangle\textup{ mod } z_1\ldots z_9 = \Lambda \textup{ mod } z_1\ldots z_9$. This means that $\delta(z_1\ldots z_9, \Lambda)=\delta(z_1\ldots z_9, \beta_1, \ldots, \beta_8)$ and $\overline{\delta}(z_1\ldots z_9, \Lambda)=\overline{\delta}(z_1\ldots z_9, \beta_1, \ldots, \beta_8)$. By a mechanical calculation we find that $\Pi/\mathbb{E}_8\cong \mathbb{Z}/3$. It follows that there exists some basis $z_1\ldots z_9, \beta_1, \ldots, \beta_9$ such that $m_{\beta_1}=3$ and $m_{\beta_i}=1$ for $i\neq 1$. By Theorem \ref{bigeqden}
$$
\overline{\delta}(\mathbb{E}_8) = \sum_{d_1\mid 3}\varphi(d_1)\left(S_{d_1, 1}+\sum_{\substack{x\in \widehat{a}V}}S_{[2^{C_x}, d_1], [2,\Delta(\widehat{x})]}+\sum_{\substack{x\in V/\langle \widehat{a}\rangle\\ x\neq 1}}S_{[2^{C_x}, d_1], \Delta(\widehat{x})} \right)
$$
and for all other rank 8 root lattices we may define it recursively:
\begin{equation}\label{sumrecursion}
    \overline{\delta}(\Lambda) = \delta(\Lambda) - \sum_{\substack{\Lambda\subset \Lambda'\subset \mathbb{E}_8\\ \Lambda' \textup{ is a root lattice}}}\overline{\delta}(\Lambda')
\end{equation}
\begin{lemma}
    Let $R$ be a rank 8 root system in $\mathbb{E}_8$ and $H$ a group containing a subgroup isomorphic to $\mathbb{Z}/3$. Then the set of sublattices $\Lambda\subset\mathbb{E}_8$ such that $\Lambda$ is of type $R$ and $\Pi/\Lambda\cong H$ is given by the following table:
    \begin{table}[!ht]
    \centering
    \begin{tabular}{|c|c|c|c|}
    \hline
        R & $\mathbb{E}_8/R$ & $H$ & $\#\{\Lambda\subset\mathbb{E}_8\mid \Lambda\cong R, \Pi/\Lambda\cong H\}$ \\ \hline
        $\mathbb{A}_8$ & $\mathbb{Z}/3$ & $\mathbb{Z}/9$ & 648 \\ \cline{3-4}
        ~ & ~ & $(\mathbb{Z}/3)^2$ & 312 \\ \hline
        $\mathbb{D}_8$ & $\mathbb{Z}/2$ & $\mathbb{Z}/6$ & 135 \\ \hline
        $\mathbb{E}_7\oplus\mathbb{A}_1$ & $\mathbb{Z}/2$ & $\mathbb{Z}/6$ & 120 \\ \hline
        $\mathbb{A}_5\oplus\mathbb{A}_2\oplus\mathbb{A}_1$ & $\mathbb{Z}/6$ & $\mathbb{Z}/18$ & 27216 \\ \cline{3-4}
        ~ & ~ & $\mathbb{Z}/6\oplus\mathbb{Z}/3$ & 13104 \\ \hline
        $\mathbb{A}_4^{\oplus 2}$ & $\mathbb{Z}/5$ & $\mathbb{Z}/15$ & 12096 \\ \hline
        $\mathbb{E}_6\oplus\mathbb{A}_2$ & $\mathbb{Z}/3$ & $\mathbb{Z}/9$ & 756 \\ \cline{3-4}
        ~  & ~ & $(\mathbb{Z}/3)^2$ & 364 \\ \hline
        $\mathbb{A}_7\oplus\mathbb{A}_1$ & $\mathbb{Z}/4$ & $\mathbb{Z}/12$ & 4320 \\ \hline
        $\mathbb{D}_6\oplus\mathbb{A}_1^{\oplus 2}$ & $(\mathbb{Z}/2)^2$ & $\mathbb{Z}/6\oplus\mathbb{Z}/2$ & 540 \\ \hline
        $\mathbb{D}_5\oplus\mathbb{A}_3$ & $\mathbb{Z}/4$ & $\mathbb{Z}/12$ & 7560 \\ \hline
        $\mathbb{D}_4^{\oplus 2}$ & $(\mathbb{Z}/2)^2$ & $\mathbb{Z}/6\oplus\mathbb{Z}/2$ & 1575 \\ \hline
        $(\mathbb{A}_3\oplus\mathbb{A}_1)^{\oplus 2}$ & $\mathbb{Z}/4\oplus\mathbb{Z}/2$ & $\mathbb{Z}/6\oplus\mathbb{Z}/4$ & 37800 \\ \hline
        $\mathbb{A}_2^{\oplus 4}$ & $(\mathbb{Z}/3)^2$ & $\mathbb{Z}/9\oplus\mathbb{Z}/3$ & 10080 \\ \cline{3-4}
        ~ & ~ & $(\mathbb{Z}/3)^3$ & 1120 \\ \hline
        $\mathbb{E}_8$ & $1$ & $\mathbb{Z}/3$ & 1 \\ \hline
    \end{tabular}
    \caption{The number of rank 8 lattices $\Lambda$ of type $R$ such that $\Pi/\Lambda\cong H$}
    \label{tab:PiLambTable}
\end{table}
\end{lemma}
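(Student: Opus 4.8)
The plan is to reduce the entire table to one computation inside $\mathbb{E}_8$. For any rank-$8$ sublattice $\Lambda\subseteq\mathbb{E}_8$ there is a tautological exact sequence $0\to\mathbb{E}_8/\Lambda\to\Pi/\Lambda\to\Pi/\mathbb{E}_8\to 0$, and since $\Pi/\mathbb{E}_8\cong\mathbb{Z}/3$ this shows $\lvert\Pi/\Lambda\rvert=3\lvert\mathbb{E}_8/\Lambda\rvert$, with $\mathbb{E}_8/\Lambda$ given by Table~\ref{tab:E8tor_table}, and that $\mathbb{Z}/3$ is always a direct summand of the $3'$-part of $\Pi/\Lambda$ (which explains the hypothesis on $H$). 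I would fix a lift $w\in\Pi$ of a generator of $\Pi/\mathbb{E}_8$ and set $v_0:=3w\in\mathbb{E}_8$. Using the concrete model $\Pi\cong\mathbb{Z}^9/\langle(1,\dots,1)\rangle$, $\mathbb{E}_8\cong\{x\in\mathbb{Z}^9:\sum x_i\equiv 0\ (3)\}/\langle(1,\dots,1)\rangle$ coming from the restriction map, a direct check shows $v_0$ is primitive of square-length $8$ and well defined modulo $3\mathbb{E}_8$ up to sign, and that the image of $w$ in $\Pi/\Lambda$ has order $3\cdot\operatorname{ord}_{\mathbb{E}_8/\Lambda}(v_0\bmod\Lambda)$; hence the isomorphism type of $\Pi/\Lambda$ is determined by the pair $\bigl(\mathbb{E}_8/\Lambda,\ v_0\bmod\Lambda\bigr)$. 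For every type $R$ with $\gcd(\lvert\mathbb{E}_8/R\rvert,3)=1$ — that is $\mathbb{D}_8$, $\mathbb{E}_7\oplus\mathbb{A}_1$, $\mathbb{A}_7\oplus\mathbb{A}_1$, $\mathbb{D}_5\oplus\mathbb{A}_3$, $\mathbb{A}_4^{\oplus2}$, $(\mathbb{A}_3\oplus\mathbb{A}_1)^{\oplus2}$, $\mathbb{D}_6\oplus\mathbb{A}_1^{\oplus2}$, $\mathbb{D}_4^{\oplus2}$, and trivially $\mathbb{E}_8$ — the exact sequence splits uniquely, so $\Pi/\Lambda\cong(\mathbb{E}_8/\Lambda)\oplus\mathbb{Z}/3$ for every $\Lambda$ of type $R$; those rows of Table~\ref{tab:PiLambTable} thus have a single value of $H$ with count $\#O_R$ taken from Table~\ref{LatticeNum}, and one checks the agreement.

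Next I would treat the two maximal $3$-types, $R=\mathbb{A}_8$ and $R=\mathbb{E}_6\oplus\mathbb{A}_2$, where $\mathbb{E}_8/\Lambda\cong\mathbb{Z}/3$ and $3\mathbb{E}_8\subseteq\Lambda$: by the previous paragraph $\Pi/\Lambda\cong(\mathbb{Z}/3)^2$ exactly when $v_0\in\Lambda$ and $\Pi/\Lambda\cong\mathbb{Z}/9$ otherwise, so everything comes down to $N_R:=\#\{\Lambda\cong R:v_0\in\Lambda\}$. Since the sublattices of type $R$ form a single $\operatorname{Aut}(\mathbb{E}_8)=W(\mathbb{E}_8)$-orbit of size $\#O_R$ with point stabiliser of order $\#N_R$ (Table~\ref{LatticeNum}), double counting $\{(\Lambda,v):\Lambda\cong R,\ v\in\Lambda\cap\mathcal O(v_0)\}$, where $\mathcal O(v_0)$ is the $W(\mathbb{E}_8)$-orbit of $v_0$, yields
\[
N_R=\#O_R\cdot\frac{\#\bigl(\Lambda_0\cap\mathcal O(v_0)\bigr)}{\#\mathcal O(v_0)}
\]
for any fixed $\Lambda_0\cong R$. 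The remaining input is to identify which $W(\mathbb{E}_8)$-orbit of square-length-$8$ vectors contains $v_0$ and the intersection number $\#(\Lambda_0\cap\mathcal O(v_0))$; I would compute both in the explicit affine $\widetilde{\mathbb{E}}_8$ Dynkin realization used in the proof of Lemma~\ref{rootcontainment}, which already places $v_0$ and standard copies of $\mathbb{A}_8$ and $\mathbb{E}_6\oplus\mathbb{A}_2$ in common coordinates. The outcome is the common ratio $\#(\Lambda_0\cap\mathcal O(v_0))/\#\mathcal O(v_0)=13/40$, giving $N_{\mathbb{A}_8}=960\cdot\tfrac{13}{40}=312$ and $N_{\mathbb{E}_6\oplus\mathbb{A}_2}=1120\cdot\tfrac{13}{40}=364$ (with $648$, $756$ for the $\mathbb{Z}/9$ complements); a sanity check is that the numbers of index-$3$ sublattices of $\mathbb{E}_8$ of each determinant-$9$ type through $v_0$ must sum to the number $(3^7-1)/2$ of hyperplanes of $\mathbb{E}_8/3\mathbb{E}_8$ containing $\overline{v_0}$.

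The last two types are handled by descent along Lemma~\ref{rootcontainment}. For $R=\mathbb{A}_5\oplus\mathbb{A}_2\oplus\mathbb{A}_1$ ($\mathbb{E}_8/\Lambda\cong\mathbb{Z}/6$), each $\Lambda$ lies in a unique $\Lambda''\cong\mathbb{E}_6\oplus\mathbb{A}_2$, and one identifies $\Lambda''=\Lambda+3\mathbb{E}_8$; unwinding the first paragraph gives $\Pi/\Lambda\cong\mathbb{Z}/6\oplus\mathbb{Z}/3\iff v_0\in\Lambda''\iff\Pi/\Lambda''\cong(\mathbb{Z}/3)^2$, so the count is $364$ times the number $40320/1120=36$ of copies of $R$ inside a fixed $\mathbb{E}_6\oplus\mathbb{A}_2$, i.e.\ $13104$, with complement $27216$. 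For $R=\mathbb{A}_2^{\oplus4}$ ($\mathbb{E}_8/\Lambda\cong(\mathbb{Z}/3)^2$), $\Pi/\Lambda\cong(\mathbb{Z}/3)^3\iff v_0\in\Lambda$, while $\Lambda$ lies in exactly four copies $\Lambda''_1,\dots,\Lambda''_4\cong\mathbb{E}_6\oplus\mathbb{A}_2$, one per line of $\mathbb{E}_8/\Lambda$; as $v_0\in\Lambda$ forces $v_0\in\Lambda''_k$ for all $k$ and $v_0\notin\Lambda$ forces it for exactly one $k$, evaluating $\#\{(\Lambda,k):v_0\in\Lambda''_k\supseteq\Lambda\}=364\cdot(44800/1120)=364\cdot40$ in two ways gives $4A+(11200-A)=14560$, hence $A=1120$ and complement $10080$. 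Matching every row with Table~\ref{tab:PiLambTable} finishes the proof.

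The one genuinely non-formal step is the lattice computation in the second paragraph — pinning down the $W(\mathbb{E}_8)$-orbit of $v_0$ and its incidence with a fixed maximal root sublattice — because the isomorphism type of $\Pi/\Lambda$ is not $W(\mathbb{E}_8)$-invariant: the overlattice $\Pi$, equivalently the line $\langle\overline{v_0}\rangle\subset\mathbb{E}_8/3\mathbb{E}_8$, is fixed throughout, so this finite but slightly delicate count in $\mathbb{E}_8$ cannot be avoided; it is naturally carried out in the affine Dynkin model or verified on a computer.
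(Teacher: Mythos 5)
Your plan is correct in all of its structural steps, but it takes a genuinely different route from the paper for the $3$-primary types, and it is worth spelling out the difference. Both you and the paper handle the types $R$ with $\gcd(\#(\mathbb{E}_8/R),3)=1$ by the same splitting observation, so those rows are just the orbit counts $\#O_R$. For the remaining four types the paper never introduces a distinguished vector: it shows that the proportions of lattices of types $\mathbb{A}_2^{\oplus 4}$, $\mathbb{A}_5\oplus\mathbb{A}_2\oplus\mathbb{A}_1$ and $\mathbb{A}_8$ whose quotient contains $\mathbb{Z}/9$ all coincide with the proportion for $\mathbb{E}_6\oplus\mathbb{A}_2$ (the $\mathbb{A}_8$ case via a common rank-$7$ sublattice), and then evaluates that single proportion by an equivalent rank-$7$ count (copies of $\mathbb{E}_6\oplus\mathbb{A}_1$ with $\Pi/\Lambda\cong\mathbb{Z}$) carried out by the Magma code in the appendix. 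You instead encode the overlattice $\Pi$ by the vector $v_0=3w\in\mathbb{E}_8$ and characterize $\Pi/\Lambda$ through $v_0\bmod\Lambda$; this criterion is correct and well posed, since $3\mathbb{E}_8\subseteq\Lambda$ (or $\subseteq\Lambda+3\mathbb{E}_8=\Lambda''$) in all four cases. Your Weyl-orbit double count $N_R=\#O_R\cdot\#\bigl(\Lambda_0\cap\mathcal{O}(v_0)\bigr)/\#\mathcal{O}(v_0)$ is a valid identity (single $W(\mathbb{E}_8)$-orbit on lattices of type $R$, transitivity on $\mathcal{O}(v_0)$), and your two descent computations, $364\cdot 36=13104$ for $\mathbb{A}_5\oplus\mathbb{A}_2\oplus\mathbb{A}_1$ and $3A+11200=364\cdot 40$ giving $A=1120$ for $\mathbb{A}_2^{\oplus 4}$, are correct and in fact cleaner and more explicit than the ratio identities in the paper. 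What your approach buys is a uniform membership criterion and a single well-defined finite computation inside $\mathbb{E}_8$; what the paper's buys is that the equality of the $\mathbb{A}_8$ and $\mathbb{E}_6\oplus\mathbb{A}_2$ proportions is established structurally before any computation, whereas in your scheme it only emerges (or fails to) from the numerical output.

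The one genuine weak point is that the decisive numbers are asserted rather than derived: you never identify the $W(\mathbb{E}_8)$-orbit of $v_0$ among the norm-$8$ vectors, nor compute $\#\mathcal{O}(v_0)$ or the intersection numbers $\#\bigl(\Lambda_0\cap\mathcal{O}(v_0)\bigr)$ for $\Lambda_0\cong\mathbb{A}_8$ and $\Lambda_0\cong\mathbb{E}_6\oplus\mathbb{A}_2$, and the quoted ratio $13/40$ is exactly $312/960=364/1120$ read off from the table you are trying to prove. Since the paper likewise delegates its key ratio to a computer calculation, this is acceptable as a plan, but as written your argument establishes the table only modulo that unexecuted computation (together with the orbit counts $\#O_R$ and the containment lemma, which you use as stated). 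Also, your sanity check is off as phrased: $312+364=676$, not $(3^7-1)/2=1093$; the $1093$ hyperplanes of $\mathbb{E}_8/3\mathbb{E}_8$ through $\overline{v_0}$ also account for the $3280-960-1120=1200$ index-$3$ sublattices of $\mathbb{E}_8$ that are not root lattices, so the check only closes if you count those through $v_0$ as well.
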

\newpage
\begin{proof}
As $\Pi/\mathbb{E}_8\cong\mathbb{Z}/3$, if $\mathbb{E}_8/\Lambda$ has no subgroup of order 3, then $\Pi/\Lambda$ is invariant under the Weyl group transformation on $\Lambda$ and is equal to $\mathbb{Z}/3\oplus\mathbb{E}_8/\Lambda$. This leaves up 4 cases: $\mathbb{A}_8, \mathbb{A}_5\oplus\mathbb{A}_2\oplus\mathbb{A}_1, \mathbb{A}_2^{\oplus 4}, \mathbb{E}_6\oplus\mathbb{A}_2$. 
\par In the remaining cases, as $\mathbb{E}_8/\Lambda$ contains a copy of $\mathbb{Z}/3$, it is possible that $\Pi/\Lambda$ might contain a copy of $\mathbb{Z}/9$. If we choose a lattice $\Lambda$ of type $\mathbb{A}_2^{\oplus 4}$ in $\mathbb{E}_8$ such that $\mathbb{Z}/9\triangleleft\Pi/\Lambda$, we see that it must contain 3 superlattices $\Lambda_1, \Lambda_2, \Lambda_3$ in $\mathbb{E}_8$ such that $\Pi/\Lambda_i\cong \mathbb{Z}/9$ and one more superlattice $\Lambda_4$ in $\mathbb{E}_8$ such that $\Pi/\Lambda_4\cong (\mathbb{Z}/3)^2$. As these superlattices all correspond to lattices of type $\mathbb{E}_6\oplus\mathbb{A}_2$, and all copies of $\mathbb{E}_6\oplus\mathbb{A}_2$ contain a copy of $\mathbb{A}_2^{\oplus 4}$, we are able to relate the copies of $\mathbb{A}_2^{\oplus 4}$ which contain $\mathbb{Z}/9$ and the copies of $\mathbb{E}_6\oplus\mathbb{A}_2$ which contain $\mathbb{Z}/9$:
\begin{equation}
    \frac{3}{4}\frac{\#\{\Lambda\subset\mathbb{E}_8\mid \Lambda\cong\mathbb{A}_2^{\oplus 4}, \Pi/\Lambda\cong\mathbb{Z}/9\oplus\mathbb{Z}/3\}}{\#\{\Lambda\subset\mathbb{E}_8\mid \Lambda\cong\mathbb{A}_2^{\oplus 4}\}} = \frac{\#\{\Lambda\subset\mathbb{E}_8\mid \Lambda\cong\mathbb{E}_6\oplus\mathbb{A}_2, \Pi/\Lambda\cong\mathbb{Z}/9\}}{\#\{\Lambda\subset\mathbb{E}_8\mid \Lambda\cong\mathbb{E}_6\oplus\mathbb{A}_2\}}
\end{equation}
Likewise we see that if we choose a lattice $\Lambda$ of type $\mathbb{A}_5\oplus\mathbb{A}_2\oplus\mathbb{A}_1$ such that $\Pi/\Lambda\cong\mathbb{Z}/18$, we again see that there exist one lattice $\Lambda'$ in $\mathbb{E}_8$ such that $\Pi/\Lambda'\cong\mathbb{Z}/9$. As $\Lambda'$ is a copy of $\mathbb{E}_6\oplus\mathbb{A}_2$, we again are able to relate the copies of $\mathbb{A}_5\oplus\mathbb{A}_2\oplus\mathbb{A}_1$ which contain $\mathbb{Z}/9$ and the copies of $\mathbb{E}_6\oplus\mathbb{A}_2$ which contain $\mathbb{Z}/9$:
\begin{equation}
    \frac{\#\{\Lambda\subset\mathbb{E}_8\mid \Lambda\cong\mathbb{A}_5\oplus\mathbb{A}_2\oplus\mathbb{A}_1, \Pi/\Lambda\cong\mathbb{Z}/18\}}{\#\{\Lambda\subset\mathbb{E}_8\mid \Lambda\cong\mathbb{A}_5\oplus\mathbb{A}_2\oplus\mathbb{A}_1\}} = \frac{\#\{\Lambda\subset\mathbb{E}_8\mid \Lambda\cong\mathbb{E}_6\oplus\mathbb{A}_2, \Pi/\Lambda\cong\mathbb{Z}/9\}}{\#\{\Lambda\subset\mathbb{E}_8\mid \Lambda\cong\mathbb{E}_6\oplus\mathbb{A}_2\}}
\end{equation}
\par Finally, by a direct calculation we see that $\mathbb{E}_8/\Lambda\cong\mathbb{Z}$ for any $\Lambda\cong \mathbb{A}_5\oplus\mathbb{A}_2$ and so for any such $\Lambda$, we see that $\Pi/\Lambda$ can be either $\mathbb{Z}$ or $\mathbb{Z}/3\oplus\mathbb{Z}$. Now let $\Lambda_1\cong\mathbb{E}_6\oplus\mathbb{A}_2$ and $\Lambda_2\cong\mathbb{A}_8$ such that they both contain a given $\Lambda\cong\mathbb{E}_6\oplus\mathbb{A}_2$. As $\Lambda$ is of rank 7 and $\Lambda_1, \Lambda_2$ rank 8, it is clear that $\Pi/\Lambda_1\cong\mathbb{Z}/9$ if and only if $\Pi/\Lambda\cong\mathbb{Z}$ and $\Pi/\Lambda_2\cong\mathbb{Z}/9$ if and only if $\Pi/\Lambda\cong\mathbb{Z}$. This means as result that
\begin{equation}
    \frac{\#\{\Lambda\subset\mathbb{E}_8\mid \Lambda\cong\mathbb{A}_8, \Pi/\Lambda\cong\mathbb{Z}/9\}}{\#\{\Lambda\subset\mathbb{E}_8\mid \Lambda\cong\mathbb{A}_8\}} = \frac{\#\{\Lambda\subset\mathbb{E}_8\mid \Lambda\cong\mathbb{E}_6\oplus\mathbb{A}_2, \Pi/\Lambda\cong\mathbb{Z}/9\}}{\#\{\Lambda\subset\mathbb{E}_8\mid \Lambda\cong\mathbb{E}_6\oplus\mathbb{A}_2\}}
\end{equation}
So it suffices to calculate $\#\{\Lambda\subset\mathbb{E}_8\mid \Lambda\cong\mathbb{E}_6\oplus\mathbb{A}_2, \Pi/\Lambda\cong\mathbb{Z}/9\}$ in order to find all $\Lambda$ such that $\mathbb{Z}/9\triangleleft \Pi/\Lambda$, which we may do computationally. By a similar argument we also see that
\begin{equation}
    \frac{\#\{\Lambda\subset\mathbb{E}_8\mid \Lambda\cong\mathbb{E}_6\oplus\mathbb{A}_1, \Pi/\Lambda\cong\mathbb{Z}\}}{\#\{\Lambda\subset\mathbb{E}_8\mid \Lambda\cong\mathbb{E}_6\oplus\mathbb{A}_1\}} = \frac{\#\{\Lambda\subset\mathbb{E}_8\mid \Lambda\cong\mathbb{E}_6\oplus\mathbb{A}_2, \Pi/\Lambda\cong\mathbb{Z}/9\}}{\#\{\Lambda\subset\mathbb{E}_8\mid \Lambda\cong\mathbb{E}_6\oplus\mathbb{A}_2\}}
\end{equation}
This ratio can be found using the code in Appendix \ref{ratcode}. As a result, using values from Table \ref{LatticeNum}, we get the values in Table \ref{tab:PiLambTable}. 
\end{proof}
\begin{proof}[Proof of Theorem \ref{tablemainthm}]
\par By Corollary \ref{infsetup}, we see that $\Delta(\widehat{b_i})\rightarrow\infty$ that 
$$
\lim_{\Delta(\widehat{b_i})\rightarrow\infty, \forall i} \delta(a, b_1, \ldots, b_8) = \sum_{\substack{d\mid m_a\\ d_1\mid m_{b_1}\\ \cdots\\ d_8\mid m_{b_8}}}\Big(\varphi(d)\prod_{h=1}^8 \varphi(d_h)\Big)S^{(8)}_{[d_h]_h, d}.
$$
As a result we see that
$$
\lim_{\Delta(\widehat{b_i})\rightarrow\infty, \forall i} \overline{\delta}(\mathbb{E}_8) = \sum_{d_1\mid 3}\varphi(d_1)S^{(8)}_{d_1, 1} 
$$
and
$$
\lim_{\Delta(\widehat{b_i})\rightarrow\infty, \forall i}\delta(\Lambda) = \lim_{\Delta(\widehat{b_i})\rightarrow\infty, \forall i}\delta(z_1\ldots z_9, \Lambda) = \sum_{\substack{d_1\mid m_{\beta_1}\\ \cdots\\ d_8\mid m_{\beta_8}}}\Big(\prod_{h=1}^8 \varphi(d_h)\Big)S^{(8)}_{[d_h]_h, 1}
$$
where $\beta_1, \ldots, \beta_8$ denotes the basis of $\Lambda \textup{ mod }\langle a\rangle$ such that $\mathbb{E}_8/\Lambda\cong\mathbb{Z}/m_{\beta_1}\oplus\ldots\oplus\mathbb{Z}/m_{\beta_8}$. Now, using Equation \ref{sumrecursion}, either by a hand calculation or through the magma code in Appendix \ref{tordelcode} we find the values in Table \ref{tab:density_table}. 
\par This means that the density or primes such that $X_p$ is polyhedral for $\widehat{b_i}=b_i$ as $\Delta(\widehat{b_i})\rightarrow\infty$ is $83568208560360063877/43166735003229880320 S^{(8)}$.
\end{proof}

\appendix
\section{Diagrams for Theorem \ref{rootcontainment}}

In these figures we denote the regular nodes of the Dynkin with a shaded circle and the added node in the affine Dynkin diagram with a $\times$.
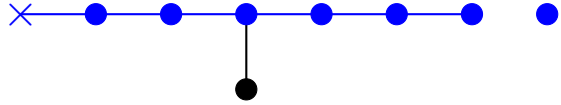
\begin{figure}[!ht]
    \centering
    \begin{tikzpicture}
    \draw[blue, thick] (-4, 1) -- (2, 1);
    \draw[black, thick] (-1, 1) -- (-1, 0);
    \filldraw[black] (-1, 0) circle (4pt);
    \filldraw[blue] (-1, 1) circle (4pt);
    \filldraw[blue] (-2, 1) circle (4pt);
    \filldraw[blue] (-3, 1) circle (4pt);
    \filldraw[blue] (0, 1) circle (4pt);
    \filldraw[blue] (1, 1) circle (4pt);
    \filldraw[blue] (2, 1) circle (4pt);
    \filldraw[blue] (3, 1) circle (4pt);
    \draw[blue, thick] (-4,1) node {$\bigtimes$};
\end{tikzpicture}
    \caption{$\mathbb{A}_7\oplus\mathbb{A}_1$ contained in $\mathbb{E}_7\oplus\mathbb{A}_1$}
    \label{fig:A7A1E7A1}
\end{figure}
\begin{figure}[!ht]
    \centering
    \begin{tikzpicture}
    \draw[blue, thick] (-2, 1) -- (1, 1);
    \draw[black,thick] (1, 1) -- (3, 1);
    \draw[blue,thick] (3, 1) -- (4, 1);
    \draw[blue,thick] (3, 1) -- (3, 2);
    \draw[blue, thick] (-1, 1) -- (-1, 0);
    \filldraw[blue] (-1, 0) circle (4pt);
    \filldraw[blue] (-1, 1) circle (4pt);
    \filldraw[blue] (-2, 1) circle (4pt);
    \filldraw[blue] (0, 1) circle (4pt);
    \filldraw[blue] (1, 1) circle (4pt);
    \filldraw[black] (2, 1) circle (4pt);
    \filldraw[blue] (3, 1) circle (4pt);
    \filldraw[blue] (4, 1) circle (4pt);
    \draw[blue, thick] (3,2) node {$\bigtimes$};
\end{tikzpicture}
    \caption{$\mathbb{D}_5\oplus\mathbb{A}_3$ contained in $\mathbb{D}_8$}
    \label{fig:D5A3D8}
\end{figure}
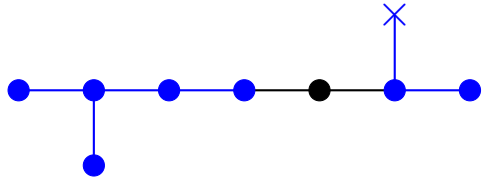
\begin{figure}[!ht]
    \centering
    \begin{tikzpicture}
    \draw[blue, thick] (-1, 1) -- (2, 1);
    \draw[blue, thick] (-4, 1) -- (-3, 1);
    \draw[black, thick] (-3, 1) -- (-1, 1);
    \draw[blue, thick] (-1, 1) -- (-1, 0);
    \filldraw[blue] (-1, 0) circle (4pt);
    \filldraw[blue] (-1, 1) circle (4pt);
    \filldraw[black] (-2, 1) circle (4pt);
    \filldraw[blue] (-3, 1) circle (4pt);
    \filldraw[blue] (0, 1) circle (4pt);
    \filldraw[blue] (1, 1) circle (4pt);
    \filldraw[blue] (2, 1) circle (4pt);
    \filldraw[blue] (3, 1) circle (4pt);
    \draw[blue, thick] (-4,1) node {$\bigtimes$};
\end{tikzpicture}
    \caption{$\mathbb{A}_5\oplus\mathbb{A}_2\oplus\mathbb{A}_1$ contained in $\mathbb{E}_7\oplus\mathbb{A}_1$}
    \label{fig:A5A2A1E7A1}
\end{figure} 
\begin{figure}[!ht]
    \centering
    \begin{tikzpicture}
    \draw[blue, thick] (0, 1) -- (1, 1);
    \draw[blue, thick] (-1, 1) -- (0, 1);
    \draw[blue, thick] (2, 1) -- (3, 1);
    \draw[blue, thick] (-3, 1) -- (-2, 1);
    \draw[blue, thick] (-2, 1) -- (-1, 1);
    \draw[black, thick] (-1, 1) -- (-1, 0);
    \draw[black, thick] (-1, 0) -- (-1, -1);
    \filldraw[black] (-1, 0) circle (4pt);
    \filldraw[blue] (-1, 1) circle (4pt);
    \filldraw[blue] (-2, 1) circle (4pt);
    \filldraw[blue] (-3, 1) circle (4pt);
    \filldraw[blue] (0, 1) circle (4pt);
    \filldraw[blue] (1, 1) circle (4pt);
    \filldraw[blue] (2, 1) circle (4pt);
    \filldraw[blue] (3, 1) circle (4pt);
    \draw[blue, thick] (-1,-1) node {$\bigtimes$};
\end{tikzpicture}
    \caption{$\mathbb{A}_5\oplus\mathbb{A}_2\oplus\mathbb{A}_1$ contained in $\mathbb{E}_6\oplus\mathbb{A}_2$}
    \label{A5A2A1E6A2}
\end{figure}
\begin{figure}[!ht]
    \centering
    \begin{tikzpicture}
    \draw[blue, thick] (0, 1) -- (1, 1);
    \draw[black, thick] (-1, 1) -- (0, 1);
    \draw[blue, thick] (2, 1) -- (3, 1);
    \draw[blue, thick] (-3, 1) -- (-2, 1);
    \draw[black, thick] (-2, 1) -- (-1, 1);
    \draw[black, thick] (-1, 1) -- (-1, 0);
    \draw[blue, thick] (-1, 0) -- (-1, -1);
    \filldraw[blue] (-1, 0) circle (4pt);
    \filldraw[black] (-1, 1) circle (4pt);
    \filldraw[blue] (-2, 1) circle (4pt);
    \filldraw[blue] (-3, 1) circle (4pt);
    \filldraw[blue] (0, 1) circle (4pt);
    \filldraw[blue] (1, 1) circle (4pt);
    \filldraw[blue] (2, 1) circle (4pt);
    \filldraw[blue] (3, 1) circle (4pt);
    \draw[blue, thick] (-1,-1) node {$\bigtimes$};
\end{tikzpicture}
    \caption{$\mathbb{A}_2^{\oplus 4}$ contained in $\mathbb{E}_6\oplus\mathbb{A}_2$}
    \label{4A2E6A2}
\end{figure}
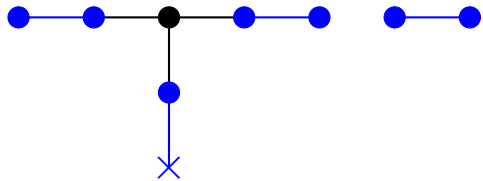
\begin{figure}[!ht]
    \centering
    \begin{tikzpicture}
    \draw[blue, thick] (-2, 1) -- (2, 1);
    \draw[black,thick] (2, 1) -- (3, 1);
    \draw[black,thick] (3, 1) -- (4, 1);
    \draw[black,thick] (3, 1) -- (3, 2);
    \draw[blue, thick] (-1, 1) -- (-1, 0);
    \filldraw[blue] (-1, 0) circle (4pt);
    \filldraw[blue] (-1, 1) circle (4pt);
    \filldraw[blue] (-2, 1) circle (4pt);
    \filldraw[blue] (0, 1) circle (4pt);
    \filldraw[blue] (1, 1) circle (4pt);
    \filldraw[blue] (2, 1) circle (4pt);
    \filldraw[black] (3, 1) circle (4pt);
    \filldraw[blue] (4, 1) circle (4pt);
    \draw[blue, thick] (3,2) node {$\bigtimes$};
\end{tikzpicture}
    \caption{$\mathbb{D}_6\oplus\mathbb{A}_1^{\oplus 2}$ contained in $\mathbb{D}_8$}
    \label{D62A1D8}
\end{figure}
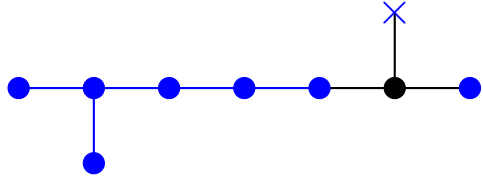
\begin{figure}[!ht]
    \centering
    \begin{tikzpicture}
    \draw[black, thick] (-1, 1) -- (1, 1);
    \draw[blue, thick] (-2, 1) -- (-1, 1);
    \draw[blue, thick] (-1, 1) -- (-1, 0);
    \draw[blue, thick] (1, 1) -- (2, 1);
    \draw[blue, thick] (1, 1) -- (1, 2);
    \filldraw[blue] (-1, 0) circle (4pt);
    \filldraw[blue] (-1, 1) circle (4pt);
    \filldraw[blue] (-2, 1) circle (4pt);
    \filldraw[black] (0, 1) circle (4pt);
    \filldraw[blue] (1, 1) circle (4pt);
    \filldraw[blue] (2, 1) circle (4pt);
    \filldraw[blue] (3, 1) circle (4pt);
    \filldraw[blue] (4, 1) circle (4pt);
    \draw[blue, thick] (1,2) node {$\bigtimes$};
\end{tikzpicture}
    \caption{$(\mathbb{A}_3\oplus\mathbb{A}_1)^{\oplus 2}$ contained in $\mathbb{D}_6\oplus\mathbb{A}_1^{\oplus 2}$}
    \label{2A32A1D62A1}
\end{figure}
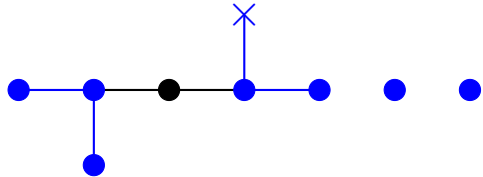
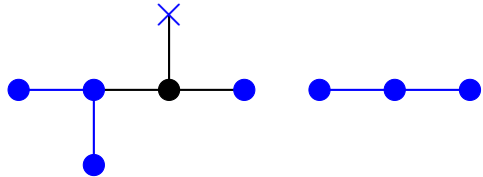
\begin{figure}[!ht]
    \centering
    \begin{tikzpicture}
    \draw[black, thick] (-1, 1) -- (1, 1);
    \draw[blue, thick] (-2, 1) -- (-1, 1);
    \draw[blue, thick] (-1, 1) -- (-1, 0);
    \draw[blue, thick] (2, 1) -- (4, 1);
    \draw[black, thick] (0, 1) -- (0, 2);
    \filldraw[blue] (-1, 0) circle (4pt);
    \filldraw[blue] (-1, 1) circle (4pt);
    \filldraw[blue] (-2, 1) circle (4pt);
    \filldraw[black] (0, 1) circle (4pt);
    \filldraw[blue] (1, 1) circle (4pt);
    \filldraw[blue] (2, 1) circle (4pt);
    \filldraw[blue] (3, 1) circle (4pt);
    \filldraw[blue] (4, 1) circle (4pt);
    \draw[blue, thick] (0,2) node {$\bigtimes$};
\end{tikzpicture}
    \caption{$(\mathbb{A}_3\oplus\mathbb{A}_1)^{\oplus 2}$ contained in $\mathbb{D}_5\oplus\mathbb{A}_3$}
    \label{2A32A1D5A3}
\end{figure}
\newpage
\section{Magma Code for Sections 3, 4}
\subsection{Code to Approximate Generalized Stephens's Constant}\label{stephconstcode}:
\begin{lstlisting}
RR := RealField();

//Adjusting up to which primes one calculates this for will, as expected, give a more accurate value
primes := PrimesUpTo(10^7);

//This code gives us the n-variable Stephens's Constant. In this case it will give is the 8-variable one. 
n:=8;

list := [RR!(1-(p^n-1)/(p-1)*p/(p^(n+2)-1)): p in primes];

C := 1;

for l in list do
	C*:= l;
end for;

print C;
\end{lstlisting}
\subsection{Code to Calculate $\frac{\#\{\Lambda\subset\mathbb{E}_8\mid \Lambda\cong\mathbb{E}_6\oplus\mathbb{A}_1, \Pi/\Lambda\cong\mathbb{Z}\}}{\#\{\Lambda\subset\mathbb{E}_8\mid \Lambda\cong\mathbb{E}_6\oplus\mathbb{A}_1\}}$}\label{ratcode}:
\begin{lstlisting}
G := FreeAbelianGroup(9);

//Defines the E8 Root System and its Weyl Group
R1:= RootSystem("E8");
W:= CoxeterGroup(GrpMat,"E8");
R1Roots := Roots(R1);

//Defines one copy of E6+A1 in E8
R3:= sub< R1| [1, 2, 3, 4, 5, 6, 8]>;

//Returns all copies of E6+A1 in E8 as a set of roots
O := Orbit(W, Roots(R4));
OList := [x: x in O];

//Converts the set of roots into its corresponding Root System
RList := [sub<R1| [Index(R1Roots, x[1]), Index(R1Roots, x[2]), Index(R1Roots, x[3]), Index(R1Roots, x[4]), Index(R1Roots, x[5]), Index(R1Roots, x[6]), Index(R1Roots, x[7])]> : x in OList];
//Gets the simple roots of each copy of E6+A1
SList := [SimpleRoots(r): r in RList];

//As all of the roots are written with the roots of E8 as the basis, this function converts it to values in Z8
CoordToRel := function(l) 
    c := [G.1-G.2, G.2-G.3, G.3-G.4, G.1+G.2+G.3, G.4-G.5, G.5-G.6, G.6-G.7, G.7-G.8];
    out := (Integers() ! l[1])*c[1];
    for i in [2 .. 8] do
        out := out+(Integers()! l[i])*c[i];
    end for;
    return out;
end function;
//Having obtained the simple roots for any given lattice L of rank n, this calculates Z9/<(1,...,1), L>
RootsToQuo := function(sR, n)
    rel := [CoordToRel(sR[i]) : i in [1 .. n]];
    return quo< G| Append(rel, G.1+G.2+G.3+G.4+G.5+G.6+G.7+G.8+G.9)>;
end function;

//Calculates Z9/<(1,...,1), L> for all L of type E6+A1
GpList:= [RootsToQuo(x, 7): x in SList];

//As all elements of GpList has either one or two generators, for any x in GpList, the element corresponding to x is 1 if it has 2 generators and 0 if it has one generator
GenList := [Ngens(x)-1: x in GpList];

//This calculates the ratio of L such that Z9/<(1,...,1), L>=Z/3+Z/3, and gives us the desired ratio
print (Rationals() ! Z9type/#GenList);

\end{lstlisting}

\subsection{Code to Calculate $\overline{\delta}(X)/S^{(8)}$}\label{tordelcode}:
\begin{lstlisting}
GrpSOne := function(G)
    genOrd := [ Order(g) : g in  Generators(G)];
    genDiv := [ Divisors(s): s in genOrd ];
    sum := 0;
    if #genOrd eq 1 then
        for d in Divisors(genOrd[1]) do
            prod := EulerPhi(d)/d^10;
            for p in PrimeDivisors(d) do
                prod *:= (p^9*(p^2-1))/(p^11-p^10-p^9+1);
            end for;
            sum +:= prod;
        end for;
    end if;
    if #genOrd eq 2 then
        for c in Divisors(genOrd[1]) do
            for d in Divisors(genOrd[2]) do
                prod := EulerPhi(c)*EulerPhi(d)/Lcm(c, d)^10;
                for p in PrimeDivisors(Lcm(c, d)) do
                prod *:= (p^9*(p^2-1))/(p^11-p^10-p^9+1);
                end for;
                sum +:= prod;
            end for;
        end for;
    end if;
    return sum;
end function;
GrpS := function(G, H)
    genOrd := [ Order(g) : g in  Generators(G)] cat [ Order(g) : g in  Generators(H)];
    genDiv := [ Divisors(s): s in genOrd ];
    sum := 0;
    if #genOrd eq 1 then
        for d in Divisors(genOrd[1]) do
            prod := EulerPhi(d)/d^10;
            for p in PrimeDivisors(d) do
                prod *:= (p^9*(p^2-1))/(p^11-p^10-p^9+1);
            end for;
            sum +:= prod;
        end for;
    end if;
    if #genOrd eq 2 then
        for c in Divisors(genOrd[1]) do
            for d in Divisors(genOrd[2]) do
                prod := EulerPhi(c)*EulerPhi(d)/Lcm(c, d)^10;
                for p in PrimeDivisors(Lcm(c, d)) do
                prod *:= (p^9*(p^2-1))/(p^11-p^10-p^9+1);
                end for;
                sum +:= prod;
            end for;
        end for;
    end if;
    if #genOrd eq 3 then
        for c in Divisors(genOrd[1]) do
            for d in Divisors(genOrd[2]) do
                for e in Divisors(genOrd[3]) do 
                    prod := EulerPhi(c)*EulerPhi(d)*EulerPhi(e)/Lcm([c, d, e])^10;
                    for p in PrimeDivisors(Lcm([c, d, e])) do
                    prod *:= (p^9*(p^2-1))/(p^11-p^10-p^9+1);
                    end for;
                    sum +:= prod;   
                end for;
            end for;
        end for;
    end if;
    return sum;
end function;
\end{lstlisting}

\printbibliography

\end{document}